\newcommand{\textwidthScaling}{0.98}
\newcommand{\idx}{j}
\newcommand{\idxx}{k}
\def\tau{{t'}}
\let\defaultCompany\empty
\DeclareRobustCommand{\nonDefaultCompany}[1]{%
\ifthenelse{\equal{#1}{\defaultCompany}}{}{%
		#1%
	}%
}
\DeclareRobustCommand{\nonDefaultCompanyKern}[2]{%
\ifthenelse{\equal{#1}{\defaultCompany}}{}{%
		#2\relax#1%
	}%
}
\newcommand{\nonEmptyKern}[2]{%
\ifthenelse{\isempty{#1}}{}{%
		#2\relax#1%
	}%
}
\newcommand{\Units}{\mkern-2mu\mathcal{I}\mkern-2mu}
\newcommand{\Periods}{[T]}
\newcommand{\DiscreteStartupCostSum}{\textup{DCU}_{\nonDefaultCompany{i}}^\Sigma}
\newcommand{\StartupCostSum}{\textup{LCU}_{\nonDefaultCompany{i}}^\Sigma}
\newcommand{\startupCostSum}{\startupCost(i,\Sigma)}
\newcommand{\scsEpigraph}{\epi(\StartupCostSum)}
\newcommand{\scsVertices}{{V^{\Sigma}}}
\newcommand{\scsBinaryTrees}{{\mathcal{B}}}
\newcommand{\scsSumDiff}[1]{\delta^{#1}}
\newcommand{\newTreeFunction}[2]{
	\newvar{#1T}{2}{{#2}^{##1}\ifthenelse{\isempty{##2}}{}{(##2)}}
	\newvar{#1}{1}{{#2}\ifthenelse{\isempty{##1}}{}{(##1)}}
}
\newcommand{\oneSubtree}[2][B]{#1({#2})}
\let\root\relax
\newcommand{\topleftN}{\depth(1)+1}
\newcommand{\toprightN}{{\depth(T)+1}}
\colorlet{onOffColor}{green}
\colorlet{onOffText}{green!50!black}
\colorlet{onOffBack}{green!20}
\colorlet{prodColor}{blue}
\colorlet{prodText}{blue!80!black}
\colorlet{tempColor}{red}
\colorlet{tempText}{red!80!black}
\colorlet{fuelNeedColor}{red}
\colorlet{efficiencyColor}{blue}
\colorlet{cooling}{blue!70!green}
\colorlet{coolingBack}{cooling!8!white}
\colorlet{coolingText}{cooling!80!black}
\colorlet{heating}{red}
\colorlet{heatingBack}{red!8!white}
\colorlet{heatingText}{red!70!black}
\newcommand{\forallPeriods}{\forall\, t \in \Periods}
\newcommand{\forallUnitsPeriods}{\forall\, i \in \Units, t \in \Periods}
\newcommand{\forallUnits}{\forall\, i \in \Units}
\newcommand{\@rr}{ep\-re\-sen\-ta\-tion}
\newcommand{\@mc}[1]{\texorpdfstring{$\mathcal{#1}$}{#1}}
\newcommand{\hrepresentation}{\@mc{H}-r\@rr\xspace}
\newcommand{\Hrepresentation}{\@mc{H}-R\@rr\xspace}
\newcommand{\hrepresentations}{\@mc{H}-r\@rr s\xspace}
\newcommand{\Hrepresentations}{\@mc{H}-R\@rr s\xspace}
\newcommand{\vrepresentation}{\@mc{V}-r\@rr\xspace}
\newcommand{\Vrepresentation}{\@mc{V}-R\@rr\xspace}
\newcommand{\vrepresentations}{\@mc{V}-r\@rr s\xspace}
\newcommand{\Vrepresentations}{\@mc{V}-R\@rr s\xspace}
\begin{document}

\author{Ren\'e Brandenberg\thanks{brandenb@ma.tum.de, Department of Mathematics} \and Matthias Huber\thanks{matthias.huber@tum.de, Department of Electrical and Computer Engineering} \and Matthias Silbernagl\thanks{silbernagl@tum.de, Department of Mathematics \newline The authors are with the Technische Universit\"at M\"unchen, 80333 M\"unchen, Germany.}}

\title{The Summed Start-up Costs in a Unit Commitment Problem}

\date{Preprint, 4.3.2015}

\maketitle

\begin{abstract}
	We consider the sum of the incurred start-up costs of a single unit in a Unit Commitment problem. Our major result is a correspondence between the facets of its epigraph and some binary trees for concave start-up cost functions~$\StartupCostFunction(,)$, which is bijective if $\StartupCostFunction(,)$ is strictly concave. We derive an exponential \hrepresentation of this epigraph, and provide an exact linear separation algorithm. These results significantly reduce the integrality gap of the Mixed Integer formulation of a Unit Commitment Problem compared to current literature.
\end{abstract}

\section{Introduction}
\label{section:SCS_IntroductionGeneral}

Start-up costs have been an integral part of the Unit Commitment problem since its first formulation \cite{garver_power_1962}, and gain further importance due to the increasing penetration of variable renewable energy. Electricity production from such sources, i.e. mainly from wind and solar, increases the variability of the net load met by thermal generators, resulting in a more frequent cycling and a higher number of start-ups (\cite{kumar_power_2012}). Hence, the percentage of start-up costs within the total production costs rises and an accurate modeling of the start-up costs becomes increasingly important.

In contrast to the detailed incorporation of start-up costs in other methodologies (\eg Lagrangean Relaxation, Dynamic Programming, meta-heuristics), Mixed Integer Programming formulations were restricted to constant start-up costs (\cite{garver_power_1962}) for a long time. The first model capable of representing arbitrary start-up cost functions was introduced in \cite{arroyo_optimal_2000}. Since then, two major formulations have emerged: the model of \cite{carrion_computationally_2006}, based on \cite{nowak_stochastic_2000}, and the model of \cite{simoglou_optimal_2010}. Their comparison in \cite{morales-espana_tight_2013} highlights the importance of a low integrality gap, demonstrating a significantly reduced solution time. This motivates a thorough polyhedral analysis.

In a Unit Commitment Problem, the operational state of each unit~$i \in \Units$ is represented by a binary vector~$\onOff(i,) \in \set{0,1}^T$, where for each period~$t \in \Periods$, $\onOff(i,t) = 1$ iff the unit is online. As defined in \eqref{equation:SCS_DiscreteStartupCostSum}, we denote by $\DiscreteStartupCostSum$ the summed start-up cost function, such that $\DiscreteStartupCostSum(\onOff(i,))$ equals the total start-up costs incurred by unit~$i$ in the course of~$\onOff(i,)$. Our major result is a correspondence between the facets of $\conv(\epi(\DiscreteStartupCostSum))$ and binary trees of size~$T$ for concave start-up cost functions~$\StartupCostFunction(i,)$, which is bijective if $\StartupCostFunction(i,)$ is strictly concave. We derive the exponential class of \emph{binary tree inequalities} (BTIs), which together with the trivial inequalities $0 \leq \onOff(i,t) \leq 1$, define an \hrepresentation of $\conv(\epi(\DiscreteStartupCostSum))$, and give an exact linear separation algorithm for the BTIs.

This result is important, both from the theoretical and practical point of view, since
\begin{itemize*}
  \item the BTIs, and in particular the proof of~\cref{result:SCS_FracPointsOnFacet}, strengthen our understanding of the start-up costs,
  \item the existing start-up cost models are extended formulations of $\conv(\epi(\DiscreteStartupCostSum))$, and may therefore be analyzed and tightened using the BTIs, and
  \item separated in a cutting plane approach, the BTIs reduce the integrality gap of the current state-of the art models (\cite{carrion_computationally_2006,simoglou_optimal_2010}) to match those of the the temperature model (\cite{silbernagl_improving_2014}).
\end{itemize*}

\section{Preliminaries}
\label{section:IntroductionSpecific}

Every unit incurs costs on start-up, mainly due to the need for reheating and extra maintenance to mitigate the effects of thermal stress. These start-up costs depend on the length~$L$ of the preceding offline time, and are assumed to be given for each unit~$i$ as
\begin{equation}
	\label{equation:SC_FunctionForTime}
 	\StartupCostFunction(i,): \quad \nonnegativeReals \to \nonnegativeReals, \quad L \mapsto \text{cost after offline time of length~$L$.}
\end{equation}

\noindent $\StartupCostFunction(i,)$ is usually described as \enquote{exponential} (\cite{wood_power_1996}), and defined as
\begin{equation}
	\label{equation:SC_ThermalStartupCosts}
	\forallUnits, L \in \nnreals: \qquad \StartupCostFunction(i,L) = \smash[t]{\begin{cases}
		\heatingCost(i) (1 - \exp{\heatloss(i) L}) + \fixedStartupCost(i) & \text{if $L > 0$,}\\
		0 & \text{else.}
	\end{cases}}
\end{equation}
with parameters $\heatingCost(i) > 0$, $\fixedStartupCost(i) \geq 0$, $\heatloss(i) > 0$ (\cf \cref{figure:SC_Function}).

\begin{figure}[!ht]
	\centering
	\def\fixed{8}
	\def\variable{25}
	\def\lambda{0.03}
	\def\maxT{12}
	\def\T{9}
	\def\offset{20}
	\begin{tikzpicture}[x=0.8mm,y=0.8mm,scale=\textwidthScaling]
		\tikzstyle{every node}+=[inner sep=0pt]
		\draw[->] (-2,0) -- (\T*\maxT+5, 0);
		\draw[anchor=west] (\T*\maxT+7,0) node (D) {$l$};
		\draw[->] (0,-2) -- (0, \variable+\fixed+2);
		\draw (-6,\variable+\fixed+4) node[anchor=north] {cost};
		\draw[blue!30] (0, \fixed+\variable) -- (\T*\maxT,\fixed+\variable);
		\draw[blue!30] (0, \fixed) -- (\T*\maxT,\fixed);
		\begin{pgfinterruptboundingbox}
			\draw [decorate,decoration={brace,amplitude=1.5\smm},xshift=-2]
				(0,0) -- (0, \fixed) node [anchor=east, black,midway,xshift=-2mm] {$\fixedStartupCost(i)$};
			\draw [decorate,decoration={brace,amplitude=1.5\smm},xshift=-2]
				(0,\fixed) -- (0, \variable + \fixed) node [anchor=east, black,midway,xshift=-2mm] {$\heatingCost(i)$};
		\end{pgfinterruptboundingbox}
		\draw[blue,very thick] (0,0) plot[smooth,domain=0:\T*\maxT,samples=100] (\x,{\fixed+\variable*(1-exp(-\lambda*\x))});
		\fill[blue] (0,0) circle (0.6mm);

		\begin{pgfinterruptboundingbox}
			\node[blue, anchor=west] at (\T*\maxT+5,\fixed+\variable) {$\StartupCostFunction(i,l)$};
		\end{pgfinterruptboundingbox}
	\end{tikzpicture}
	\caption{Typical exponential start-up cost function of a thermal unit}
	\label{figure:SC_Function}
\end{figure}
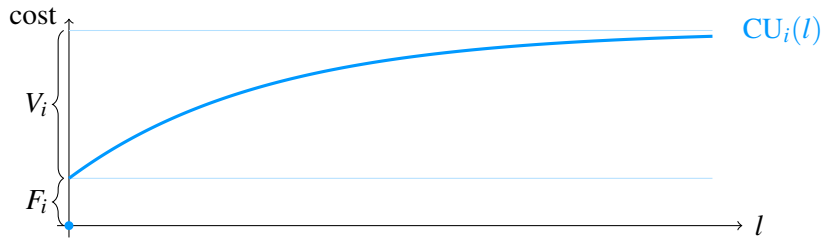

In \cite{silbernagl_improving_2014} we present how to exploit the exponential definition of the start-up costs. Here however, we only need $\StartupCostFunction(i,)$ to be non-negative, increasing, and concave, with $\StartupCostFunction(i,0) = 0$.

The modeled time range is discretized into $T$ periods of possibly varying lengths~$\periodLength(1)\mkern-3mu, \ldots, \periodLength(T)\mkern-5mu$. Furthermore, we assume the unit's offline time prior to period~$1$ to be given as $\preOffline(i)$ (\emph{p}re-model \emph{o}ffline \emph{t}ime). To simplify notation, we also discretize the start-up cost functions such that $\StartupCost(i,t,l)$ denotes the costs of a start-up of unit~$i$ in period~$t$, which was offline during the previous~$l$ periods. To this end, we define the function $\offlineLength(t,l)$ as the offline time in that situation, that is
\begin{equation}
	\label{equation:SC_OfflineLength}
	\forallPeriods,\, l \in \discrange*{0}{t-1}: \qquad \offlineLength(t,l) := \begin{cases}
		\sum_{\idx = 1}^{l} \periodLength(t-\idx) & \text{if $l < t-1$,}\\
		\sum_{\idx = 1}^{l} \periodLength(t-\idx) + \preOffline(i) & \text{if $l = t-1$.}
	\end{cases}
\end{equation}

The above case distinction differentiates between the case where the offline time lies completely within the modeled time range and the case where it stretches to include the pre-model offline time $\preOffline(i)$. The corresponding start-up costs $\StartupCost(i,t,l)$ are
\begin{equation}
	\label{equation:SC_FunctionForPeriods}
	\forallPeriods,\, l \in \discrange*{0}{t-1}: \qquad \StartupCost(i,t,l) := \StartupCostFunction{i}{\offlineLength(t,l)}.
\end{equation}

 As noted, the operational state of a unit is modeled by the variables~$\onOff(i,t) \in \set{0,1}$, where $\onOff(i,t) = 1$ iff unit~$i$ is online in period~$t$. Deriving the start-up costs in each period~$t$ from these variables yields the \emph{discrete start-up cost functions}
\begin{gather}
	\label{equation:SC_FunctionForVector}
	\forallUnitsPeriods: \quad \DiscreteStartupCost(i,t): \ \ \set{0,1}^T \to \nnreals, \ \ \onOff(i,) \mapsto \begin{cases}\StartupCost{i}{t}{\offlineNumber(t,\onOff(i,))} & \text{if } \onOff(i,t) = 1\\
		0 & \text{else,}
	\end{cases}
	\shortintertext{where}
	\label{equation:SC_OfflineNumber}
	\offlineNumber(t,\onOff(i,)) := \max \set[\big]{l \in \discrange*{1}{t-1} \given \onOff(i,t-1) = \ldots = \onOff(i,t-l) = 0}.
\end{gather}%

\noindent In \cref{figure:SC_Schedule} this relationship is depicted for an exemplary operational schedule. Note that $\offlineNumber(,)$ and $\offlineLength(,)$ follow the notational convention of $l$ and $\periodLength()$ to denote numbers of periods in lower case and time lengths in upper case.

\begin{figure}[ht]
	\centering
	\begin{tikzpicture}[x=1mm,y=1mm,scale=\textwidthScaling]
	
		\def\w{10.7}
		\def\h{10}

		\begin{scope}[xscale=\w,yscale=\h]
			\drawOnOff{1/0, 2/1, 3/1, 4/1, 5/1, 6/0, 7/0, 8/0, 9/1, 10/0}

			\draw[onOffColor,very thick] (0,0) -- (-1.7,0) |- (-2.25,1);
			\fill[onOffBack] (-1.7,0) rectangle (-2.25,1);
			\node at (-0.85,0.5) {$\preOffline(i) = 2$};
		\end{scope}

		\drawTimeAxis{\w}{10}
		\draw[-latex] (0,0) -- (-2.5*\w,0);
		\drawVarAxis[]{\h}
		\begin{pgfinterruptboundingbox}
			\node[anchor=-45,onOffText] at (0,\h) {$\onOff(i,)$};
		\end{pgfinterruptboundingbox}

		\begin{scope}[xscale=\w,yscale=\h]
			\node (arrow_base) at (0,-0.5) {};
			
			\begin{scope}
				\node (cu2) at ($(onOff2.south)-(1,1.4)$) {$\DiscreteStartupCost(i,2)(\onOff(i,)) = \StartupCost(i,2,1) = \StartupCostFunction(i,3)$};
				\node[below of=onOff9, node distance=20.5\smm] (cu9) {$\DiscreteStartupCost(i,9)(\onOff(i,)) = \StartupCost(i,9,3) = \StartupCostFunction(i,3)$};
				\node[anchor=base] (cu5) at ($(cu2.base east)!0.5!(cu9.base west)$) {$\DiscreteStartupCost(i,5)(\onOff(i,)) = 0$};
				\foreach \t in {2,5,9}{
					\draw (cu\t.north -| onOff\t) edge[-latex,shorten <=-1mm] (onOff\t |- arrow_base);
				}
			\end{scope}
		\end{scope}
	\end{tikzpicture}
	\caption[Incurred start-up costs for an exemplary operational schedule]{Incurred start-up costs for an exemplary operational schedule ($\periodLengthBraces{1} = \ldots = \periodLengthBraces{T} = 1$). The operational schedule $\onOff(i,) \in \set{0,1}^T$ over the course of $T=10$ periods is shown in green. The first start-up takes place in period~2, with costs $\DiscreteStartupCost(i,2)(\onOff(i,))$ of $\StartupCostFunction(i,3)$ due to the preceding offline period with length $\periodLength(1) = 1$ and the pre-offline time $\preOffline(i)$ of 2. The only other start-up costs are incurred in period~9, after three offline periods with a total length of $\periodLength(6)+\periodLength(7)+\periodLength(8) = 3$.}
	\label{figure:SC_Schedule}
\end{figure}

In virtually all published Unit Commitment formulations which consider start-up costs, the sum of the start-up costs is minimized, motivating the definition
\begin{equation}
	\label{equation:SCS_DiscreteStartupCostSum}
	\forallUnits: \qquad \DiscreteStartupCostSum: \quad \set{0,1}^T \to \nnreals,\quad \onOff(i,) \mapsto \sum_{t=1}^T \DiscreteStartupCost(i,t)(\onOff(i,)).
\end{equation}

\newcommand{\allOnOff}{\squeeze{\onOff(1,),\ldots,\onOff(\abs{\Units},)}}

\noindent Using $\DiscreteStartupCostSum$, any Unit Commitment problem with start-up costs can be written as
\begin{equation}
	\label{equation:SCS_GeneralUnitCommitment}
	\min \set[\bigg]{ c(\allOnOff,z) + \sum_{i \in \Units} \DiscreteStartupCostSum(\onOff(i,)) \given (\allOnOff,z) \in \mathcal{F}},
\end{equation}
with additional variables~$z \in \reals^n$, feasible set~$\squeeze{\mathcal{F} \subseteq \set{0,1}^{T\abs{\Units}} \times \reals^n}$, and objective function~$c$. For example, in a typical Unit Commitment problem $z$ may stand for electricity output and cost variables, and $\mathcal{F}$ may model production limits and costs, ramping speed limits, and minimal up- and downtime. Recognize, even in case the objective~$c$ is linear, due to $\DiscreteStartupCostSum$ the extended objective is non-linear.

Since the domain~$\set{0,1}^T$ of $\DiscreteStartupCostSum$ is finite, the convex hull of its epigraph,
\begin{gather}
	\label{equation:SCS_EpigraphVRepresentation}
	\begin{aligned}
		\conv(\epi(\DiscreteStartupCostSum)) &= \conv\left(\set[\big]{(\onOff(i,),\startupCostSum) \given \onOff(i,) \in \set{0,1}^T, \startupCostSum \geq \DiscreteStartupCostSum(\onOff(i,))}\right)\\
			&= \conv(\scsVertices) + \pos(\unitVec{T+1}),
	\end{aligned}
	\intertext{is a polyhedron with unbounded direction $\unitVec{T+1}$ (where $\unitVec{T+1}$ denotes the unit vector in the direction of the last coordinate $\startupCostSum$), and with vertices}
	\label{equation:SCS_Vertices}
	\scsVertices= \set[\big]{(\onOff(i,),\DiscreteStartupCostSum(\onOff(i,))) \given \onOff(i,) \in \set{0,1}^T}.
\end{gather}
The domain~$\set{0,1}^T$ of $\DiscreteStartupCostSum$ is extremal in its convex hull~$[0,1]^T$. Therefore, using the convex hull corresponds to extending $\DiscreteStartupCostSum$ to fractional vectors~$\onOff(i,) \in [0,1]^T$, \ie to the unique function~$\StartupCostSum$ with
\vskip-1ex
\begin{equation}
	\label{equation:SCS_StartupCostSum}
	\scsEpigraph = \conv(\epi(\DiscreteStartupCostSum)).
\end{equation}
So, by introducing the variables~$\startupCostSum$, the minimization problem~\eqref{equation:SCS_GeneralUnitCommitment} can be rewritten as
\begin{equation}
	\label{equation:SCS_GeneralUnitCommitmentEpigraph}
	\min \set[\bigg]{ c(\allOnOff,z) + \sum_{\mathclap{i \in \Units}} \startupCostSum \given \forall\, i \in \mkern-2mu\Units: (\onOff(i,),\startupCostSum) \in \scsEpigraph, (\allOnOff,z) \in \mathcal{F}}.
\end{equation}
If $c$ is affine linear and $\mathcal{F}$ is expressible in a Mixed Integer Program (MIP), then \eqref{equation:SCS_GeneralUnitCommitmentEpigraph} is a MIP as well, since the polyhedral~$\scsEpigraph$ may be modeled using its \vrepresentation in \eqref{equation:SCS_EpigraphVRepresentation} (see \cite{dantzig_decomposition_1960} for details).

Due to the $2^T$ vertices of~$\scsEpigraph$ however, this would lead to an excessive model size. All existing approaches, including among others \cite{nowak_stochastic_2000,carrion_computationally_2006,simoglou_optimal_2010}, mitigate this by modeling sets which enclose $\scsEpigraph$, in \hrepresentation. In this publication, we provide an exact \hrepresentation of $\scsEpigraph$, accompanied by an exact separation algorithm with running time~$\bigO(T)$. At its core, our contribution results from four observations:
\begin{enumerate}
  \item Iteratively lifting a trivial inequality variable by variable results in facets of $\scsEpigraph$ and in coefficients which can be derived explicitly from the lifting order.
  \item Multiple lifting orders result in the same facet. Each lifted facet is already uniquely described by a special partial order, which can be expressed as a binary tree, warranting the name \emph{binary tree inequality} (BTI) for the lifted inequalities.
  \item The binary tree associated to a lifted facet readily identifies the points on this facet. Furthermore, these facets describe the complete lower boundary (with respect to the last coordinate~$\startupCostSum$) of the epigraph, proving that it does not have further non-trivial facets.
  \item For a fractional point, a suitable binary tree and the coefficients of the respective lifted facet can be determined efficiently.
\end{enumerate}

\noindent These observations are presented as follows:
\begin{itemize}
  \item in Section~\ref{section:SCS_Lifting} the result of lifting a trivial inequality is analyzed,
  \item in Section~\ref{section:SCS_BT} special notation for binary trees is introduced,
  \item in Section~\ref{section:SCS_BinaryTreeInequalities} the so-called \emph{binary tree inequalities} are derived and shown to be facet-inducing,
  \item in Section~\ref{section:SCS_HRepresentation} these inequalities are shown to complete the \hrepresentation of the epigraph of the summed start-up costs, and finally
  \item in Section~\ref{section:SCS_Separation} an exact separation algorithm is presented.
\end{itemize}
Since all these sections consider the epigraph~$\scsEpigraph$ of a single unit, we drop the unit index~$i$ in the following.
\def\defaultCompany{i}

\section{Lifting Inequalities}
\label{section:SCS_Lifting}

\providecommand{\liftNext}{{q}}

In this subsection, we observe that by lifting the trivial inequality $\startupCostSum \geq 0$, we gain facet-inducing inequalities with explicitly derivable coefficients.

The choice of~$\startupCostSum \geq 0$ as a starting point is motivated by the properties of $\DiscreteStartupCost(i,t)$, which imply that $\StartupCostSum$ is homogeneous, \ie that $\lambda\StartupCostSum(\onOff(i,)) = \StartupCostSum(\lambda \onOff(i,))$ for all $\lambda \in [0,1]$, $\onOff(i,) \in [0,1]^T$ (this follows explicitly from \cite{silbernagl_thesis} and implicitly from \cref{result:SCS_Homogeneous} later).
It holds for all $(x,y) \in [0,1]^T \mkern-4mu\times \reals$, $\lambda \in (0,1]$ that
\begin{align*}
	(\onOff(i,),\startupCostSum) \in \scsEpigraph &\equals \startupCostSum \geq \StartupCostSum(\onOff(i,)) \equals \lambda \startupCostSum \geq \lambda \StartupCostSum(\onOff(i,)) = \StartupCostSum(\lambda \onOff(i,))\\
	&\equals \lambda(\onOff(i,),\startupCostSum) \in \scsEpigraph.
\end{align*}
This means, $\scsEpigraph$ is the intersection of a cone with $[0,1]^T \times \reals$, and thus all facets of $\scsEpigraph$, except those induced by $0 \leq \onOff(i,t) \leq 1$, must also be facets of this cone. Hence, the facet-inducing inequalities are homogeneous, \ie without constant term, and since $\startupCostSum$ is not bounded from above in $\scsEpigraph$, of the kind
\begin{equation}
	\label{equation:SCS_LiftedInequality}
	\startupCostSum \geq \sum_{t \in \Periods} \bticoeff(t) \onOff(i,t).
\end{equation}

\newvar{liftSpace}{1}{\mathcal{F}^{#1}(\sigma)}
\newvar{liftVector}{2}{\onOff{#1}{#2}(\sigma)}
\newvar{liftSpaceInc}{1}{\mathcal{F}^{#1}_1(\sigma)}
Starting from $\startupCostSum \geq 0$, such facets can be identified using the standard sequential lifting method (see \eg \cite{bertsimas_optimization_2005}). For each order $\sigma: \Periods \to \Periods$, this method determines the coefficients~$\bticoeff{\sigma(\idx)}$ consecutively by considering the start-up costs for operational schedules~$\onOff(i,)$ with coordinates~$\sigma(\idx+1), \ldots, \sigma(T)$ fixed to $0$,
\begin{align*}
	\forall\, \idx \in \discrange{0}{T}: \qquad \liftSpace(\idx) &:= \set[\Big]{\onOff(i,) \in \set{0,1}^T \given \forall\, k \in \discrange*{\idx+1}{T}: \onOff{\idx}{\sigma(\idxx)} = 0}.\\
	\intertext{Here, each $\smash{\liftSpace(\idx)}$ extends $\smash{\liftSpace(\idx-1)}$ by all $\onOff(i,)$ with $\smash{\onOff{\idx}{\sigma(\idx)}} = 1$,}
	\forall\, \idx \in \discrange{1}{T}: \qquad \liftSpaceInc(\idx) &:= \liftSpace(\idx) \setminus \liftSpace(\idx-1) = \set{\onOff(,) \in \liftSpace(\idx), \onOff{}{\sigma(\idx)} = 1}.
\end{align*}
Being fulfilled by $0 \in \scsEpigraph$ with equality, $\startupCostSum \geq 0$ induces a 0-dimensional face of $\scsEpigraph$. By determining the coefficients~$\bticoeff{\sigma(\idx)}$ as
\begin{equation}
	\label{equation:SCS_LiftingCoefficient}
	\forall\, \idx \in \discrange{1}{T}: \qquad \bticoeff{\sigma(\idx)} := \min \set[\bigg]{\DiscreteStartupCostSum(\onOff(i,)) - \sum_{\idxx=1}^{\idx-1} \bticoeff{\sigma(\idxx)} \onOffBraces{i}{\sigma(\idxx)} \given \onOff(,) \in \liftSpaceInc(\idx)},
\end{equation}
the sequential lifting method produces a series of inequalities defining $\idx$-faces of $\scsEpigraph$, culminating in a facet of $\scsEpigraph$.

We claim that the minimum in \eqref{equation:SCS_LiftingCoefficient} is attained by the vectors $\liftVector(\idx,) \in \liftSpaceInc(\idx)$ with
\begin{equation}
	\label{equation:SCS_LiftingVectors}
	\forallPeriods: \qquad \liftVector(\idx,) := \liftVector(\idx-1,) + \unitVec{\idx},
\end{equation}
using $\liftVector(0,) := (0,\ldots,0)$. The coefficients of the lifted inequality would thus equate to
\begin{equation}
	\label{equation:SCS_LiftingCoefficientsClaim1}
	\begin{aligned}
		\forall\, j \in \discrange{1}{T}: \qquad \bticoeff{\sigma(\idx)} &= \DiscreteStartupCostSum(\liftVector(\idx,))-\DiscreteStartupCostSum(\liftVector(\idx-1,))\\
		&= \DiscreteStartupCostSum(\liftVector(\idx-1,) + \unitVec{\sigma(\idx)})-\DiscreteStartupCostSum(\liftVector(\idx-1,)).
	\end{aligned}
\end{equation}

To prove this, a closer look at the change of the summed start-up costs that defines $\bticoeff{\sigma(\idx)}$ is necessary. This change depends on $\liftVector(\idx-1,)$ and $\sigma(\idx)$, and not on the relative lifting order of $\sigma(1), \ldots, \sigma(\idx-1)$ and $\sigma(\idx+1), \ldots, \sigma(T)$. We express this by considering $\smash{\onOff(i,),\tildeOnOff(i,) \in \set{0,1}^T}$ and $t \in \Periods$ with $\tildeOnOff(i,) = \onOff(i,) + \unitVec{t}$ in place of $\liftVector(\idx-1,)$, $\liftVector(\idx,)$, and $\sigma(\idx)$.

\newpage
As can be seen in \cref{figure:SCS_LiftingExample}, there are at most two indices $\tau \in \Periods$ such that the start-up costs $\StartupCost{i}{\tau}{\onOff(i,)}$ and $\StartupCost{i}{\tau}{\tildeOnOff(i,)}$ differ,
\begin{enumerate*}
  		\item the index $t$ itself, and
  		\item the minimal index $\liftNext \in \discrange*{t+1}{T}$ with $\onOff(i,\liftNext) = 1$, if such an index exists.
\end{enumerate*}

These start-up costs depend on the number of offline periods immediately before and after period~$\sigma(\idx)$. The number of offline periods preceding $\sigma(\idx)$ is given by $\offlineNumber{\sigma(\idx)}{\tildeOnOff(i,)}$ (see \eqref{equation:SC_OfflineLength}). For each $\onOff(i,) \in \set{0,1}^T$, $t \in \Periods$, we define a symmetric function for the offline periods succeeding $t$ as
\begin{equation}
	\label{equation:SCS_OfflineNumberRight}
	\offlineNumberRight{t}{\onOff(i,)} := \max\set{\idx \in \discrange*{0}{T-t} \given \tildeOnOff(i,t+1) = \tildeOnOff(i,t+2) = \ldots = \tildeOnOff(i,t+\idx) = 0}.
\end{equation}

\begin{figure}[!ht]
	\centering
	\begin{tikzpicture}[x=1cm,y=1cm,scale=\textwidthScaling]
		\begin{scope}[yshift=1.5cm]
			\node[anchor=east] at (0,0.5) {$\onOff(i,)$};
			\drawOnOff{\liftingInput}
		\end{scope}

		\begin{scope}
			\node[anchor=east] at (0,0.5) {$\tildeOnOff(i,)$};
			\drawOnOff{\liftingResult};
		\end{scope}

		\begin{pgfonlayer}{background}
			\fill[red!10,yshift=-0.57cm,yscale=4,rounded corners=2ex]
			              (5,0) rectangle +(1,1)
			              (10,0) rectangle +(1,1);
		\end{pgfonlayer}
		\node (idx)  at (5.5,3) {$t$};
		\node (next) at (10.5,3) {$\liftNext$};

		\draw[->] (0,-0.3) -- (0,2.8);
		\draw[->] (-0.3,1.5) -- (12.5,1.5);
		\draw[->] (-0.3,0) -- (12.5,0) node[anchor=west] {$t$};
		\foreach \i in {1,2,...,12} {
			\node[anchor=north] (t\i) at (\i-0.5, 0) {$\i$};
			\draw (\i, 0) +(0, -0.08) -- +(0, 0.08);
			\draw (\i, 1.5) +(0, -0.08) -- +(0, 0.08);
		}

		\draw [decorate,decoration={brace,amplitude=2\smm,mirror}] ($(2.1,0 |- t1.south)+(0,1mm)$) -- ($(4.9,0 |- t1.south)+(0,1mm)$) node [midway,anchor=north,yshift=-2\smm] {$\offlineNumber{t}{\tildeOnOff(i,)}$};
		\draw [decorate,decoration={brace,amplitude=2\smm,mirror}] ($(6.1,0 |- t1.south)+(0,1mm)$) -- ($(9.9,0 |- t1.south)+(0,1mm)$) node (h1) [midway,anchor=north,yshift=-2\smm] {$\offlineNumberRight{t}{\tildeOnOff(i,)}$};
		\draw [decorate,decoration={brace,amplitude=2\smm,mirror}] ($(2.1,0 |- h1.south)+(0,1mm)$) -- ($(9.9,0 |- h1.south)+(0,1mm)$) node [midway,anchor=north,yshift=-2\smm] {$\offlineNumber{t}{\tildeOnOff(i,)} + \offlineNumberRight{t}{\tildeOnOff(i,)} + 1$};

	\end{tikzpicture}
	\caption[A step of the sequential lifting method]{A step of the sequential lifting method, from the vector $\onOff(i,)$ to the vector $\tildeOnOff(i,)$. By setting $\smash{\onOff{\idx}{t}} = 1$, the offline time of length~$\offlineNumber{t}{\tildeOnOff(i,)} + \offlineNumberRight{t}{\tildeOnOff(i,)} + 1$ is split into two offline times of lengths~$\offlineNumber{t}{\tildeOnOff(i,)}$ and $\offlineNumberRight{t}{\tildeOnOff(i,)}$, thereby changing the summed start-up costs.}
	\label{figure:SCS_LiftingExample}
\end{figure}

So, for two operational schedules~$\onOff(i,),\tildeOnOff(i,) \in \set{0,1}^T$ which differ solely in period~$t$, the start-up costs are unequal in periods~$t$ and $t + \offlineNumberRight{t}{\tildeOnOff(i,)}$.
	Abbreviating $l := \offlineNumber{t}{\tildeOnOff(i,)}$ and $r := \offlineNumberRight{t}{\tildeOnOff(i,)}$, we obtain
\begin{equation*}
	\DiscreteStartupCostSum(\tildeOnOff(i,)) - \DiscreteStartupCostSum(\onOff(i,))
	= \begin{cases}
		\DiscreteStartupCost(i,t)(\tildeOnOff(i,)) + \DiscreteStartupCost{i}{t + r + 1}(\tildeOnOff(i,)) - \DiscreteStartupCost{i}{t + r + 1}(\onOff(,)) & \text{ if $t + r < T$},\\
		\DiscreteStartupCost(i,t)(\tildeOnOff(i,)) & \text{ else.}
	\end{cases}
\end{equation*}
Thus, the change in the summed start-up costs depends only on the offline periods prior and after~$t$, and can be further simplified.

\begin{proposition}
	\label[proposition]{result:SCS_SumUpdate}
	For each $\onOff(i,), \tildeOnOff(i,) \in \set{0,1}^T$ and $t \in \Periods$ with $\tildeOnOff(i,) = \onOff(i,) + \unitVec{t}$, it holds
	\begin{gather*}
		\DiscreteStartupCostSum(\tildeOnOff(i,)) - \DiscreteStartupCostSum(\onOff(i,)) = \scsSumDiff{t}(\offlineNumber{t}{\tildeOnOff(i,)},\offlineNumberRight{t}{\tildeOnOff(i,)}),
		\shortintertext{where}
		\begin{aligned}
			\scsSumDiff{t}:\quad &\discrange*{0}{t-1} \times \discrange*{0}{T-t} \to \reals\\
			&(l,r) \mapsto \begin{cases}
			\StartupCost(i,t,l) + \StartupCost(i,t + r + 1, r) - \StartupCost(i,t + r + 1,l + r + 1) & \text{ if $t + r < T$},\\
			\StartupCost(i,t,l) & \text{else.}
		\end{cases}
		\end{aligned}
	\end{gather*}
\end{proposition}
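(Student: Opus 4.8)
The plan is to build directly on the case distinction for $\DiscreteStartupCostSum(\tildeOnOff(i,)) - \DiscreteStartupCostSum(\onOff(i,))$ derived immediately above the statement, and to rewrite every occurring discrete start-up cost $\DiscreteStartupCost(i,\tau)$ through its defining formula~\eqref{equation:SC_FunctionForVector}. Abbreviating $l := \offlineNumber{t}{\tildeOnOff(i,)}$, $r := \offlineNumberRight{t}{\tildeOnOff(i,)}$, and $q := t + r + 1$, the proposition then reduces to three substitutions, two of which rest on the offline-count identities $\offlineNumber{q}{\tildeOnOff(i,)} = r$ and $\offlineNumber{q}{\onOff(i,)} = l + r + 1$.

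I would first record that $\onOff(i,)$ and $\tildeOnOff(i,)$ differ only in coordinate~$t$, where $\onOff(i,t) = 0$ and $\tildeOnOff(i,t) = 1$; in particular $l$ and $r$ may be read off either vector. To make the case distinction rigorous (it is stated only informally via \cref{figure:SCS_LiftingExample}), I would verify that $\DiscreteStartupCost(i,\tau)$ is unchanged for every $\tau \notin \set{t,q}$: for $\tau < t$ it depends only on coordinates~$\le \tau$, on which the two vectors coincide; for $t < \tau < q$ both vectors are offline in period~$\tau$, so the cost is~$0$ on either side; and for $\tau > q$ the online period~$q$ interrupts the maximal offline run ending at period~$\tau-1$, so $\offlineNumber{\tau}{\cdot}$ depends only on coordinates in $\discrange{q}{\tau-1}$, again a block on which the vectors agree.

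It remains to evaluate the contributions at $\tau = t$ and, when $q \le T$, at $\tau = q$. At $\tau = t$ we read off $\DiscreteStartupCost(i,t)(\onOff(i,)) = 0$ and $\DiscreteStartupCost(i,t)(\tildeOnOff(i,)) = \StartupCost(i,t,l)$ directly from~\eqref{equation:SC_FunctionForVector}. At $\tau = q$ period~$q$ is online in both vectors, so only the preceding offline count matters: in $\tildeOnOff(i,)$ period~$t$ is online, hence the run before~$q$ is exactly $t+1,\ldots,t+r$ and $\offlineNumber{q}{\tildeOnOff(i,)} = r$; in $\onOff(i,)$ period~$t$ is offline, so this run merges the $r$ periods after~$t$, period~$t$ itself, and the $l$ offline periods preceding~$t$, giving $\offlineNumber{q}{\onOff(i,)} = l + r + 1$. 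Substituting produces the net change $\StartupCost(i,q,r) - \StartupCost(i,q,l+r+1)$ at period~$q$; adding the term from $\tau = t$ reproduces the first branch of $\scsSumDiff{t}(l,r)$, whereas the case $t + r = T$ (no period~$q$) leaves only $\StartupCost(i,t,l)$, which is the second branch.

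I expect the identity $\offlineNumber{q}{\onOff(i,)} = l + r + 1$ to be the delicate step: one must see that turning period~$t$ offline fuses the offline stretch before~$t$ with the one between~$t$ and~$q$ into a single maximal run of length $l + r + 1$, and check this against the boundary conventions of~\eqref{equation:SC_OfflineNumber} in the degenerate sub-cases $t = 1$, $l = 0$, and $q = T$.
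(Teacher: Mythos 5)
Your proposal is correct and takes essentially the same approach as the paper: the paper establishes this proposition through the informal derivation immediately preceding it (the start-up costs change only at $t$ and at the first subsequent online period $q = t+r+1$, after which the offline counts $r$ and $l+r+1$ at $q$ are substituted), and your write-up reproduces exactly this argument, merely making rigorous the steps the paper delegates to \cref{figure:SCS_LiftingExample}.
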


The following lemma shows where the concavity of the start-up cost function~$\StartupCostFunction(i,)$ is used.
\begin{lemma}
	\label[lemma]{result:SCS_SumDiffIncreasing}
	For each $t \in \Periods$, $\scsSumDiff{t}$ is increasing in $l$ and $r$ if $\StartupCostFunction(i,)$ is concave, and strictly increasing if $\StartupCostFunction(i,)$ is strictly concave.
\end{lemma}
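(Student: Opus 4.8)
The plan is to collapse $\scsSumDiff{t}$ to a single expression of the form $\StartupCostFunction(i,a) + \StartupCostFunction(i,b) - \StartupCostFunction(i,a+P+b)$ and then isolate the one place where concavity is needed. First I would pass from the discretized costs to offline lengths via $\StartupCost(i,t,l) = \StartupCostFunction{i}{\offlineLength(t,l)}$ and record the bookkeeping identity
\[
	\offlineLength(t+r+1,l+r+1) = \offlineLength(t,l) + \periodLength(t) + \offlineLength(t+r+1,r).
\]
It holds because the left side sums the lengths of periods $t-l, \ldots, t+r$, whereas the three right-hand summands cover periods $t-l,\ldots,t-1$, period $t$, and periods $t+1,\ldots,t+r$; the pre-model offline time $\preOffline(i)$ enters the left side exactly when $l+r+1 = (t+r+1)-1$, i.e.\ $l = t-1$, which is the same condition under which it enters $\offlineLength(t,l)$, so the identity is consistent. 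Writing $a := \offlineLength(t,l)$, $b := \offlineLength(t+r+1,r)$ and $P := \periodLength(t) > 0$, the first branch of $\scsSumDiff{t}$ becomes $\StartupCostFunction(i,a) + \StartupCostFunction(i,b) - \StartupCostFunction(i,a+P+b)$ and the second branch is $\StartupCostFunction(i,a)$; note $a$ depends only on $l$ and $b$ only on $r$.

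Second, I would prove the single concavity fact that does the work: for $f = \StartupCostFunction(i,)$ concave and $h,c \geq 0$,
\[
	f(x+h) - f(x) \;\geq\; f(x+c+h) - f(x+c),
\]
that is, equal-length increments of a concave function never grow as the argument moves right, with strict inequality once $f$ is strictly concave and $h,c > 0$. Applying it twice handles both monotonicities on the first branch. For $l$: raising $l$ by one increases $a$ by a period length $h > 0$ while leaving $b$ (hence $\StartupCostFunction(i,b)$) fixed, so $\scsSumDiff{t}(l+1,r) - \scsSumDiff{t}(l,r) = \bigl[f(a+h)-f(a)\bigr] - \bigl[f(a+h+c)-f(a+c)\bigr] \geq 0$ with $c = P+b > 0$. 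For $r$ the situation is symmetric: raising $r$ increases $b$ by a period length and leaves $a$ fixed, with $c = a+P > 0$. On the second branch $\scsSumDiff{t} = \StartupCostFunction(i,a)$ is increasing in $l$ outright, since $a$ increases and $\StartupCostFunction(i,)$ is increasing.

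Finally I would treat the one branch change, at $r = T-t$. A direct computation gives $\scsSumDiff{t}(l,T-t) - \scsSumDiff{t}(l,T-t-1) = \StartupCostFunction(i,a+P+b) - \StartupCostFunction(i,b) \geq 0$ by monotonicity of $\StartupCostFunction(i,)$, so the value does not drop across the boundary; since for fixed $r$ we never switch branch, the first-branch arguments together with this step give monotonicity in both arguments everywhere. For the strict statement I would use that a strictly concave function which is increasing on $\nnreals$ is automatically strictly increasing (were an increment to vanish it would have to turn negative further right, contradicting monotonicity), so the boundary term is strictly positive and the strict form of the concavity fact applies on the first branch.

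I expect the only real obstacle to be the concavity step itself: the index bookkeeping behind the decomposition is routine once one pictures splitting a single offline interval into its parts before and after period~$t$, but the inequality $f(x+h)-f(x) \geq f(x+c+h)-f(x+c)$ is precisely where concavity is spent. The care required is to state it so that one lemma serves both the $l$- and the $r$-direction, and so that \emph{strict} concavity — rather than a separately assumed strict monotonicity — already forces the strict conclusion.
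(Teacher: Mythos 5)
Your proposal is correct and takes essentially the same approach as the paper: both proofs rewrite $\scsSumDiff{t}$ in terms of offline lengths via the same splitting identity, spend concavity exactly on the equal-increment inequality $f(x+h)-f(x)\geq f(x+c+h)-f(x+c)$, and handle the branch boundary $r = T-t$ by monotonicity of $\StartupCostFunction(i,)$. Your treatment of the strict case is in fact slightly more careful: the paper only asserts that all its inequalities become strict under strict concavity, whereas you make explicit the fact this requires, namely that a strictly concave increasing function is automatically strictly increasing.
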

\begin{proof}
	Let $t \in \Periods$, $l, \tilde{l} \in \discrange*{0}{t-1}$ with $l < \tilde{l}$ and $r,\tilde{r} \in \discrange*{0}{T-t}$  with $r < \tilde{r}$ be given. Denote the period indices
	\begin{equation*}
		\liftNext := t + r + 1 \quad\text{and}\quad \tilde{\liftNext} := t + \tilde{r} + 1,
	\end{equation*}
	as seen in \cref{figure:SCS_LiftingExample}, implying $\liftNext < \tilde{\liftNext}$.  Recall that by definition, $\StartupCost(i,t,l) = \StartupCostFunction{i}{\offlineLength(t,l)}$, where $\offlineLength(t,l)$ denotes the offline length corresponding to $l$ offline periods prior to period~$t$ (see \eqref{equation:SC_FunctionForPeriods}). In \cref{figure:SCS_LiftingExample} for example, $l$ corresponds to $\offlineNumber{t}{\tildeOnOff(i,)}$ and $r$ corresponds to $\offlineNumberRight{t}{\tildeOnOff(i,)}$. So, $\offlineLength(q,l+r+1)$ is the offline time prior to~$q$ in $\onOff(i,)$, and $\offlineLength(q,r)$ and $\offlineLength(t,l)$ are the offline times prior to $q$ and $t$ in $\tildeOnOff(i,)$, respectively. Hence
	\begin{equation*}
		\offlineLength(q,l+r+1) = \offlineLength(q,r+1) + \offlineLength(t,l).
	\end{equation*}
		
	\noindent We start by proving that $\scsSumDiff{t}$ is increasing in $l$. For $t + r = T$, $\scsSumDiff{t}$ increases in~$l$ since $\StartupCost(i,t,)$ increases,
	\begin{equation*}
		\scsSumDiff{t}(\tilde{l},r) - \scsSumDiff{t}(l,r) = \StartupCost(i, t, \tilde{l}) - \StartupCost(i, t, l) \geq 0.
	\end{equation*}
	
	\noindent For $t+r < T$, we obtain 
	\begin{align*}
		\scsSumDiff{t}(\tilde{l},&r) - \scsSumDiff{t}(l,r)
		= \StartupCost(i,q,r) + \StartupCost(i,t,\tilde{l}) - \StartupCost(i,q,\tilde{l}+r+1)\\
		&\phantom{r)- \scsSumDiff{t}(l,r) =} - \Big(\StartupCost(i,q,r) + \StartupCost(i,t,l) - \StartupCost(i,q,l+r+1)\Big)\\
		&= \left(\StartupCost(i,t,\tilde{l}) - \StartupCost(i,t,l)\right) - \left(\StartupCost(i,q,\tilde{l}+r+1) - \StartupCost(i,1,l+r+1)\right)\\
		&= \left(\StartupCostFunction{i}{\offlineLength(t, \tilde{l})} - \StartupCostFunction{i}{\offlineLength(t, l)}\right) - \left(\StartupCostFunction{i}{\offlineLength(\liftNext, \tilde{l} + r + 1)} - \StartupCostFunction{i}{\offlineLength(\liftNext, l + r + 1)}\right)\\
		&= \left(\StartupCostFunction{i}{\offlineLength(t, l) + \offlineLength(t-l-1, \tilde{l} - l)} - \StartupCostFunction{i}{\offlineLength(t, l)}\right)\\
		&\phantom{=}- \left(\StartupCostFunction{i}{\offlineLength(\liftNext, l + r + 1) + \offlineLength(t-l-1, \tilde{l} - l)} - \StartupCostFunction{i}{\offlineLength(\liftNext, l + r + 1)}\right)\\
		\intertext{which, when abbreviating $x := \offlineLength(t, l)$, $y := \offlineLength(\liftNext, l + r + 1)$, and $s := \offlineLength(t-l-1, \tilde{l} - l)$, equals}
		&= \Big(\StartupCostFunction{i}{x+s} - \StartupCostFunction{i}{x}\Big) - \Big(\StartupCostFunction{i}{y+s} - \StartupCostFunction{i}{y}\Big).
	\end{align*}
	The non-negativity of the final term follows from the characterization of concave functions by subdifferentials, using that $x < y$ and $s > 0$.

	The statement that $\scsSumDiff{t}$ is increasing in $r$ follows analogously in the case of $t + \tilde{r} < T$. If $t + \tilde{r} = T$, then
	\begin{align*}
		\scsSumDiff{t}(l,\tilde{r}) - \scsSumDiff{t}(l,r)
		&= \StartupCost(i,t, l) - \Big( \StartupCost(i,t, l) + \StartupCost(i, \liftNext, r) - \StartupCost(i, \liftNext, l + r + 1)\Big)\\
			&= \StartupCost(i, \liftNext, l + r + 1) - \StartupCost(i, \liftNext, r) \geq 0.
	\end{align*}
	
	Finally, in case of a strictly concave start-up cost function $\StartupCostFunction(i,)$, all of the above inequalities are also strict.\manualqed
\end{proof}

Using $\scsSumDiff{\sigma(j)}$, our claim regarding the lifted coefficients~(see \eqref{equation:SCS_LiftingCoefficientsClaim1}) may be restated as
\def\claim{\scsSumDiff{\sigma(j)}(\offlineNumber{\sigma(\idx)}{\onOff(\idx,)},\offlineNumberRight{\sigma(\idx)}{\onOff(\idx,)})}
\begin{gather*}
	\bticoeff{\sigma(\idx)} = \claim.
\end{gather*}

\newpage
Based on \cref{result:SCS_SumDiffIncreasing}, this could be proved by induction over $\idx$. Furthermore, \cite{bertsimas_optimization_2005} show that, as $\startupCostSum \geq 0$ is valid for $\scsEpigraph$, these inequalities define facets of $\scsEpigraph$. Since the current subsection is intended to be purely motivational, we prefer to re-introduce the lifted inequalities in Section~\ref{section:SCS_BinaryTreeInequalities}, and prove that they induce facets without referring to the sequential lifting method. The necessary steps are essentially the same:
\begin{itemize}
  \item proving that the lifted inequalities are feasible requires the same arguments as showing $\bticoeff{\sigma(\idx)} \leq \claim$, and
  \item proving that these inequalities induce facets requires the same arguments as showing $\bticoeff{\sigma(\idx)} \geq \claim$.
\end{itemize}

\section{Notation for Binary Trees}
\label{section:SCS_BT}

Describing the lifted inequalities of the last subsection necessitates non-standard notation for binary trees, which is presented in this section. A binary tree is defined as an undirected, rooted tree, where each node~$t$ has at most two child nodes: a left child~$\leftChild(t)$ and a right child~$\rightChild(t)$ (\cf \cref{figure:BT_Repetition}).

Basic notation includes:
\begin{itemize*}
	\item Each binary tree $B$ has a \emph{root} denoted by $\root(B)$.
	\item The path from a node~$t$ to the root is unique. The number~$\depth(t)$ of its edges is called the \emph{depth} of~$t$, and its nodes are the \emph{ancestors} of~$t$ (including $t$ itself). Vice versa, $t$ is called a descendant of each of its ancestors.
	\item The first ancestor, \ie the node immediately succeeding $t$ on the path to the root, is called the \emph{parent}~$\parent(t)$ of $t$. Conversely, $t$ is said to be a \emph{child} of $\parent(t)$.  
	\item The subtree~$\subtree(t)$ comprising all descendants of a node~$t$ (including $t$) is the \emph{principal subtree in~$t$}. If the left/right child of $t$ exists, then its principal subtree is the \emph{left subtree}~$\leftSubtree(t)$/\emph{right subtree}~$\rightSubtree(t)$~of~$t$. All of these subtrees are binary trees as well.
	\item The number of nodes in these subtrees in $t$ are denoted by $\size(t) := \abs{\subtree(t)}$, $\leftSize(t) := \abs{\leftSubtree(t)}$ and $\rightSize(t) := \abs{\rightSubtree(t)}$.
	\item  The \emph{rank} function is the unique mapping from the nodes~$V$ of a binary tree~$B$ to $1,\ldots,\abs{V}$ such that
	\begin{equation*}
		\forall\, t \in V \setminus \set{\root(B)}: \qquad \rank(t) \smash[t]{\begin{cases}
			< \rank{\parent(t)} & \text{if $t$ is the left child of $\parent(t)$,}\\
			> \rank{\parent(t)} & \text{if $t$ is the right child of $\parent(t)$,}
		\end{cases}}
	\end{equation*}
	and such that the nodes of each principal subtree~$\subtree(t)$ are a mapped to a contiguous interval.
\end{itemize*}

\noindent The rank function has a straight-forward interpretation: When drawing a binary tree such that left (right) childs are located below left (right) of their parents, the rank numbers the nodes from left to right (see \cref{figure:BT_Repetition}). Several basic properties can be derived from this observation:
\begin{itemize}
	\item For each binary tree~$B$ and $t \in B$, it holds that
\begin{equation*}
	\set{\rank(\tau) \given \tau \in \subtree(t)} = \set{\rank(\tau) \given \tau \in \leftSubtree(t)} \cup \set{\rank(t)} \cup \set{\rank(\tau) \given \tau \in \rightSubtree(t)}.
\end{equation*}
	\item Since by definition the ranks of nodes in a subtree are contiguous, it holds that
\begin{gather}
	\label{equation:BT_SubtreeRanks}
	\begin{aligned}
		\set{\rank(\tau) \given \tau \in \leftSubtree(t)} &= \discrange*{\rank(t)-\leftSize(t)}{\rank(t)-1},\\
		\set{\rank(\tau) \given \tau \in \rightSubtree(t)} &= \discrange*{\rank(t)+1}{\rank(t)+\rightSize(t)},\text{ and}\\
		\set{\rank(\tau) \given \tau \in \subtree(t)} &= \discrange*{\rank(t)-\leftSize(t)}{\rank(t)+\rightSize(t)}.
	\end{aligned}
\end{gather}
	\item Finally, since $\leftSubtree(t) = \subtree{\leftChild(t)}$ and $\rightSubtree(t) = \subtree{\rightChild(t)}$, we have
\begin{equation}
	\label{equation:BT_ChildRank}
	\rank{\leftChild(t)} + \rightSize{\leftChild(t)} + 1 = \rank(t) = \rank{\rightChild(t)} - \leftSize{\rightChild(t)} - 1.
\end{equation}
\end{itemize}
For example, in the binary tree shown in \cref{figure:BT_Repetition} it holds that
\begin{gather*}
	\set{\rank(\tau) \given \tau \in \subtree(t)} = \underbrace{\discrange{7}{10}}_{\set{\rank(\tau) \given \tau \in \leftSubtree(t)}} \cup \underbrace{\set{11}}_{\rank(\tau)} \cup \underbrace{\set{12}}_{\set{\rank(\tau) \given \tau \in \rightSubtree(t)}}\quad\text{and}\\[2ex]
	\underbrace{1}_{\rank{\leftChild(t)}} + \underbrace{1}_{\rightSize{\leftChild(t)}} + 1 = \underbrace{3}_{\rank(t)} = \underbrace{4}_{\rank{\rightChild(t)}} - \underbrace{0}_{\leftSize{\rightChild(t)}} - 1.
\end{gather*}

\begin{figure}[!h]
	\centering\begin{tikzpicture}[scale=\textwidthScaling]
		\LiftingTrees
		\tikzstyle{desc}=[rectangle, draw=none, inner sep=0.5\smm]
		\def\shorten{0.8\smm}
		\tikzstyle{pin}=[black!80, shorten <=\shorten]

		\foreach \i/\d in {1/3,2/4,3/2,4/3,5/1} {
			\node[tree] (n\i) at (\i-1,-\d) {\i};
		}
		\foreach \i/\d in {6/0,7/4,8/3,9/4,10/2,11/1,12/2} {
			\node[tree] (n\i) at (\i,-\d) {\i};
		}

		\foreach \i/\j in {6/5,5/3,3/1,1/2,3/4,6/11,11/10,10/8,8/7,8/9,11/12} {
			\draw (n\i) edge (n\j);
		}

		\begin{pgfonlayer}{background}
			\coordinate (h11) at ($(n11)-(0,0.7ex)$);
			\fill[S1] \convexcycle{n7,n8,h11,n12,n9}{\convexRadiusB};
			\fill[S2] \convexcycle{n7,n8,n10,n9}{\convexRadiusA};
			\fill[S3] (n12) circle (\convexRadiusA);
		\end{pgfonlayer}

			\node[desc,anchor=base east] (rank) at (n1.west |- 0,-5.7) {rank:};
		\foreach \i in {1,...,12} {
			\node[desc,anchor=base] (rank\i) at (rank.base -| n\i) {\i};
			\draw  (rank\i) edge[pin,shorten >=\shorten] (n\i);
		}

		\path (n6) ++(25:1.7) node[desc] (root) {$\root(B)$};
		\draw (root) edge[pin] (n6);

		\begin{pgfinterruptboundingbox}

			\path (n3) ++ (122:1.1) node[desc] (t) {$t$};
			\path (n5) ++ (122:1.1) node[desc] (p) {$\parent(t)$};
			\path (n1 |- n3.base) node[desc,anchor=base] (l) {$\leftChild(t)$};
			\path ($(n4)+(0.7,0)$) node[desc,anchor=west] (r) {$\rightChild(t)$};
			\draw (n3) edge[pin] (t);
			\draw (n5) edge[pin] (p);
			\draw (n1) edge[pin] (l);
			\draw (n4) edge[pin] (r);
			
			\path (n11) ++ (60:1.1) node[desc] (tt) {\tau};
			\path ($(n11)!0.5!(n12)$) ++ (0,-2.15) node[desc,S1text] (S) {$\subtree(\tau)$};
			\path (n8)  ++ (140:1.2) node[desc,S2text] (L) {$\leftSubtree(\tau)$};
			\path (n12) ++ (35:1.2) node[desc,S3text] (R) {$\rightSubtree(\tau)$};
			\draw (n11) edge[pin] (tt);
			
		\end{pgfinterruptboundingbox}
		
		\draw [decorate,decoration={brace,amplitude=1.5\smm}] (rank.south -| 10.5,0) -- +(-4,0) node (leftSize) [desc,anchor=north,midway,fill=none,yshift=-2\smm] {$\leftSize(\tau) = 4$};
		\draw [decorate,decoration={brace,amplitude=1.5\smm}] (rank.south -| 12.5,0) -- +(-1,0) node (rightSize) [desc,anchor=north,midway,fill=none,yshift=-2\smm] {$\rightSize(\tau) = 1$};
		\draw [decorate,decoration={brace,amplitude=1.5\smm,aspect=0.375}] (leftSize.south -| 12.5,0) -- +(-6,0) node (size) [desc,anchor=north,pos=0.375,fill=none,yshift=-2\smm] {$\size(\tau) = 6$};
	\end{tikzpicture}
	\caption[A binary tree with nodes labeled by ranks, basic node relationships and subtrees]{A binary tree with nodes labeled by ranks from $1$ to $12$, and $\root(B) = 6$. The parent of node $t = 3$ is $\parent(t) = 5$, its left child is $\leftChild(t) = 1$ and its right child is $\rightChild(t) = 4$. The principal subtree~$\subtree(\tau)$ in $\tau=11$ is marked in blue, its left subtree~$\leftSubtree(\tau)$ in green and its right subtree~$\rightSubtree(\tau)$ in red. The respective subtree sizes are $\size(\tau)$, $\leftSize(\tau)$ and $\rightSize(\tau)$.}
	\label{figure:BT_Repetition}
\end{figure}

The special nature of the coefficients of the facets of $\scsEpigraph$ is best characterized by some non-standard notation, the \emph{top-left} and \emph{top-right} nodes (see \cref{figure:BT_TopRightLeft}). These nodes are defined recursively, with the root being the first top-left and top-right node. Left childs of top-left nodes are also top-left nodes, and right childs of top-right nodes are top-right nodes too. The last top-left node has rank~$1$, and the last top-right node has rank~$\abs{V}$.
\begin{definition}
	\label[definition]{definition:TopLeftRightNodes}
	For each binary tree~$B$ on $n$ nodes, define $\topleft(1) := \topright(1) := \root(B)$,
	\begin{equation*}
		\forall\, \idx \in \discrange{1}{\depth(1)}: \quad \topleft(\idx+1) := \leftChild{\topleft(\idx)}, \qquad\qquad\forall\, \idx \in \discrange{1}{\depth(n)}: \quad \topright(\idx+1) := \rightChild{\topright(\idx)},
	\end{equation*}
	where $\topleft(\idx)$ denotes the $\idx$-th top-left node and $\topright(\idx)$ denotes the $\idx$-th top-right node.
\end{definition}
\noindent Conversely, ancestors of top-left (top-right) nodes must be top-left (top-right) nodes as well.

\begin{figure}[!ht]
	\centering\begin{tikzpicture}[x=0.7cm,y=-1cm,scale=\textwidthScaling]
		\tikzstyle{every node}+=[tree]
		\tikzstyle{every edge}+=[thick]
		\tikzstyle{every pin}=[rectangle, inner sep=0pt, draw=none, pin position=10]
		
		\colorlet{topleft}{blue}
		\colorlet{topright}{red}
		\tikzset{
			topleft/.style={fill=topleft!10},
			topright/.style={fill=topright!10},
			root/.style={path picture={
				\fill[topleft!10] (path picture bounding box.south west) rectangle (path picture bounding box.north);
				\fill[topright!10] (path picture bounding box.south east) rectangle (path picture bounding box.north);
			}},
            topleftPin/.style={topleft!80!black,fill=none,pin edge={draw=topleft}},
			toprightPin/.style={topright!80!black,fill=none,pin edge={draw=topright}}
		}

		\node[topleft,pin={[topleftPin]170:$\mathllap{\topleft(\topleftN)=\topleft(3)}$}] (n1)  at ( 1,2)  {1};
		\node[topleft,pin={[topleftPin]170:$\topleft(2)$}] (n2)  at ( 2,1)  {2};
		\node (n3)  at ( 3,2)  {3};
		\node (n4)  at ( 4,3)  {4};
		\node[root,pin={[toprightPin]$\topright(1)$}, pin={[topleftPin]170:$\topleft(1)$}] (n5)  at ( 5,0)  {5};
		\node (n6)  at ( 6,3)  {6};
		\node (n7)  at ( 7,2)  {7};
		\node (n8)  at ( 8,3)  {8};
		\node[topright,pin={[toprightPin]$\topright(2)$}] (n9)  at ( 9,1)  {9};
		\node (n10)  at (10,3) {10};
		\node[topright,pin={[toprightPin]$\topright(3)\mathrlap{\ = \topright(\toprightN)}$}] (n11) at (11,2) {11};

		\draw (n5) edge (n2) edge (n9);
		\draw (n2) edge (n1) edge (n3);
		\draw (n3) edge (n4);
		\draw (n9) edge (n7) edge (n11);
		\draw (n7) edge (n6) edge (n8);
		\draw (n11) edge (n10);
	\end{tikzpicture}
	\caption[A binary tree with top-left and top-right nodes]{A binary tree with top-left nodes $\topleft(i)$ and top-right nodes $\topright(i)$. Both $\topleft(1)$ and $\topright(1)$ always equal $\root(B)$. The last top-left node $\topleft(\topleftN)$ is always the node with rank~1, and the last top-right node $\topright(\toprightN)$ is always the node with maximal rank.}
	\label{figure:BT_TopRightLeft}
\end{figure}
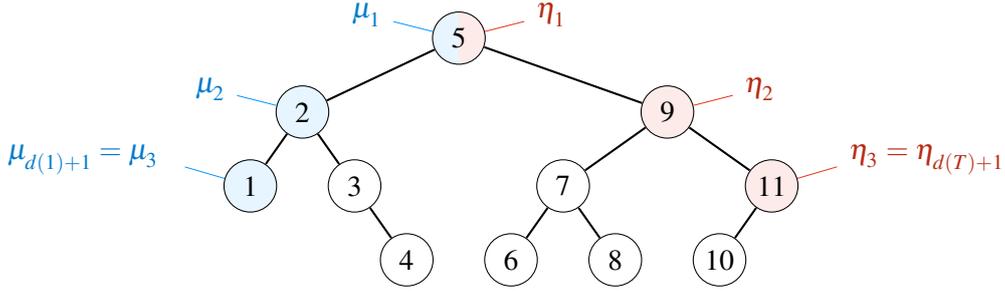

We continue by proving basic facts regarding top-right nodes, and transfer them to top-left nodes by \enquote{mirroring} the binary trees.

The node~$n$ is contained in the principal subtree of each top-right node~$\topright(\idx)$, and by definition possesses the maximal rank. Since the principal subtree of $\topright(\idx)$ spans the ranks $\discrange*{\rank(\topright{\idx})-\leftSize{\topright(\idx)}}{\rank(\topright{\idx}) + \rightSize{\topright(\idx)}}$ (see \eqref{equation:BT_SubtreeRanks}), it must hold that $\rank{\topright(i)} + \rightSize{\topright(i)} = n$. On the other hand, if the rank of a node~$t$ fulfills $\rank(t)+\rightSize(t) = n$, then $n$ lies in the right subtree of $t$. So, $t$ is an ancestor of $n$, and thus a top-right node.
\begin{proposition}
	\label[proposition]{result:TopRightOnlyRanks}
	  For each binary tree $B$ on $n$ nodes, a node $t$ is a top-right node in $B$ iff \mbox{$\rank(t) + \rightSize(t) = n$}.
\end{proposition}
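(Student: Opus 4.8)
The plan is to reduce the whole statement to a single structural fact about the node $w := \rankInv(n)$ of maximal rank, namely that \emph{the ancestors of $w$ are exactly the top-right nodes}, and then to read off both implications from the contiguous-rank formula~\eqref{equation:BT_SubtreeRanks}.

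First I would prove this structural fact. I consider the path from $\root(B)$ down to $w$ and claim that every step of it descends to a right child. Indeed, if at some node $y$ on this path the descent went to the left child, then $w$ would lie in $\leftSubtree(y)$; but by~\eqref{equation:BT_SubtreeRanks} every rank occurring in $\leftSubtree(y)$ is at most $\rank(y)-1 \leq n-1$, contradicting $\rank(w) = n$. Hence the path from the root to $w$ follows right children throughout, so by~\cref{definition:TopLeftRightNodes} it is precisely the sequence $\topright(1), \topright(2), \ldots$ of top-right nodes, with $w$ as its last element. In particular, a node is a top-right node if and only if it is an ancestor of $w$.

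With this in hand the two implications are short. For the forward direction, let $t$ be a top-right node; then $t$ is an ancestor of $w$, so $w \in \subtree(t)$. By~\eqref{equation:BT_SubtreeRanks} the ranks in $\subtree(t)$ form the interval $\discrange*{\rank(t)-\leftSize(t)}{\rank(t)+\rightSize(t)}$, and since it contains the maximal rank $n = \rank(w)$, its upper endpoint must equal $n$, giving $\rank(t) + \rightSize(t) = n$. For the converse, assume $\rank(t) + \rightSize(t) = n$. If $\rightSize(t) > 0$, then by~\eqref{equation:BT_SubtreeRanks} the right subtree of $t$ realizes the ranks $\discrange*{\rank(t)+1}{n}$, so $w$ lies in it and $t$ is an ancestor of $w$; if $\rightSize(t) = 0$, then $\rank(t) = n$ forces $t = w$. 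Either way $t$ is an ancestor of $w$, and hence, by the structural fact, a top-right node.

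The genuinely substantive step is the structural fact of the second paragraph; everything else is a mechanical application of~\eqref{equation:BT_SubtreeRanks}. The one subtlety I would be careful about is the boundary case $\rightSize(t) = 0$ in the converse, where the node of maximal rank is $t$ itself rather than a strict descendant, so that the phrase ``$w$ lies in the right subtree of $t$'' has to be replaced by the identification $t = w$.
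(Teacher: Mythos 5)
Your proof is correct and takes essentially the same route as the paper: both arguments rest on identifying the top-right nodes with the ancestors of the node of maximal rank and then reading off the two implications from the contiguous-rank formula~\eqref{equation:BT_SubtreeRanks}. You merely make two things explicit that the paper leaves implicit, namely the proof of this structural fact via the root-to-$w$ path (the paper instead invokes its earlier remarks that the last top-right node has rank $n$ and that ancestors of top-right nodes are top-right) and the boundary case $\rightSize(t) = 0$, where ``$n$ lies in the right subtree of $t$'' must be replaced by $t = w$.
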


This result may be extended to subtrees as well.
\begin{lemma}
	\label[lemma]{result:TopRightSubtree}
	  For each binary tree $B$ on $n$ nodes, a node $t$ is a top-right node in the left subtree of a node~$\tau$ iff $\rank(t) + \rightSize(t) + 1 = \rank(\tau)$.
\end{lemma}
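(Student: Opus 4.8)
The plan is to reduce the statement to \cref{result:TopRightOnlyRanks} by applying that proposition not to $B$ but to the binary tree $\leftSubtree(\tau)$ on its own. The left subtree of $\tau$ is itself a binary tree with $\leftSize(\tau)$ nodes, and the objects to be characterized are exactly its top-right nodes; the only work is to translate the quantities appearing in \cref{result:TopRightOnlyRanks} into the rank and subtree data of the ambient tree.

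First I would relate the rank function of $B$ to the rank function internal to $\leftSubtree(\tau)$. By \eqref{equation:BT_SubtreeRanks} the $B$-ranks of the nodes of $\leftSubtree(\tau)$ fill the contiguous interval $\discrange*{\rank(\tau)-\leftSize(\tau)}{\rank(\tau)-1}$, and since the left/right child relation inside $\leftSubtree(\tau)$ coincides with that in $B$, the defining properties of the rank function force the internal rank $\rho$ of $\leftSubtree(\tau)$ to be a shift of $\rank$, namely $\rho(t) = \rank(t) - \rank(\tau) + \leftSize(\tau) + 1$ for every node $t \in \leftSubtree(\tau)$. Moreover, for such $t$ the whole principal subtree $\subtree(t)$ lies inside $\leftSubtree(\tau)$, so $\rightSize(t)$ is the same whether computed in $B$ or in $\leftSubtree(\tau)$.

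Now I would invoke \cref{result:TopRightOnlyRanks} for the tree $\leftSubtree(\tau)$, which has $\leftSize(\tau)$ nodes: a node $t \in \leftSubtree(\tau)$ is a top-right node of $\leftSubtree(\tau)$ iff $\rho(t) + \rightSize(t) = \leftSize(\tau)$. Substituting the shift and cancelling $\leftSize(\tau)$ turns this into
\[
	\rank(t) + \rightSize(t) + 1 = \rank(\tau),
\]
which is the asserted equivalence. Equivalently, one can mirror the proof of \cref{result:TopRightOnlyRanks} verbatim: by \eqref{equation:BT_SubtreeRanks} the node $m$ of maximal rank in $\leftSubtree(\tau)$ satisfies $\rank(m) = \rank(\tau)-1$, the top-right nodes of $\leftSubtree(\tau)$ are precisely the ancestors of $m$ within $\leftSubtree(\tau)$, and $t$ is such an ancestor iff $m \in \subtree(t)$, iff $m$ is the maximal-rank element of $\subtree(t)$, iff $\rank(t)+\rightSize(t) = \rank(m) = \rank(\tau)-1$.

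The one point that needs genuine care, and the only non-routine step, is the converse direction: I must ensure the rank identity forces $t$ to actually lie inside $\leftSubtree(\tau)$, rather than being some unrelated node of $B$ with the right arithmetic. Here I would argue that $\rank(t)+\rightSize(t) = \rank(\tau)-1$ makes $m$ the maximal-rank element of $\subtree(t)$, so $t$ is an ancestor of $m$; but any ancestor of $m$ that is $\tau$ or higher contains $\tau$ in its subtree and hence satisfies $\rank + \rightSize \geq \rank(\tau) > \rank(\tau)-1$, a contradiction. Thus $t$ is a strict descendant of $\tau$ on the path $\leftChild(\tau) \to \dots \to m$, which lies entirely within $\leftSubtree(\tau)$, so $t$ is indeed an ancestor of $m$ inside $\leftSubtree(\tau)$ and therefore a top-right node there. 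Everything else is the rank bookkeeping of the preceding paragraphs.
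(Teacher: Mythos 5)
Your first two paragraphs coincide with the paper's own proof of the forward half: the same rank shift $\rho(t) = \rank(t) - \rank(\tau) + \leftSize(\tau) + 1$ and the same application of \cref{result:TopRightOnlyRanks} to $\leftSubtree(\tau)$ viewed as a tree in its own right. Where you diverge is the converse, and there your argument has a genuine gap. The inference ``thus $t$ is a strict descendant of $\tau$ on the path $\leftChild(\tau) \to \dots \to m$'' presupposes that the ancestors of $m$ form a chain passing through $\tau$, i.e.\ that $\tau$ is an ancestor of $m$, which is equivalent to $\leftSubtree(\tau) \neq \emptyset$ --- and that is never established (it is in fact part of what the converse must prove). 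When $\tau$ has no left child, the node $m$ of rank $\rank(\tau)-1$ still exists in $B$ but is not a descendant of $\tau$, the path $\leftChild(\tau) \to \dots \to m$ does not exist, and the lemma still makes a nontrivial assertion: that \emph{no} node $t$ satisfies $\rank(t)+\rightSize(t)+1 = \rank(\tau)$. Your dichotomy cannot reach the ancestors of $m$ lying strictly below the first common ancestor of $m$ and $\tau$: for such a node $a$ one has $m \in \subtree(a)$ and $\tau \notin \subtree(a)$, so the contiguous rank interval of $\subtree(a)$ contains $\rank(\tau)-1$ but not $\rank(\tau)$, forcing $\rank(a)+\rightSize(a) = \rank(\tau)-1$ --- the very identity. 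Ruling out such nodes (equivalently, showing that $m$ must be an ancestor of $\tau$ in this case) is the entire content of the degenerate case, and your proof is silent about it.

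The paper handles all cases uniformly: assuming $t \notin \leftSubtree(\tau)$ together with the identity, it deduces $t \notin \subtree(\tau)$ and $\tau \notin \subtree(t)$, takes the first common ancestor $a$ of $t$ and $\tau$, and obtains $\rank(t)+\rightSize(t) < \rank(a) < \rank(\tau)-\leftSize(\tau)$, a contradiction that does not care whether $\leftSubtree(\tau)$ is empty. Your approach can be repaired by the same rank-separation device applied to $m$ and $\tau$ before invoking the dichotomy: since $\rank(m) = \rank(\tau)-1$, the nodes $m$ and $\tau$ cannot be incomparable (a common ancestor distinct from both would force $\rank(\tau)-\rank(m) \geq 2$), and $m$ cannot be an ancestor of $\tau$ when some $t$ satisfies the identity (then every ancestor of $m$, in particular $t$, would contain $\tau$ in its subtree and hence violate the identity); so $\tau$ is an ancestor of $m$, and only then does your path argument go through. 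Without this step the converse is incomplete.
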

\begin{proof}
	By \eqref{equation:BT_SubtreeRanks}, the nodes in the left subtree~$\leftSubtree(\tau)$ have ranks $\discrange*{\rank(\tau)-\leftSize(\tau)}{\rank(\tau)-1}$. Thus, the rank function~$\rankT(\leftSubtree{\tau},)$ of $\leftSubtree(\tau)$, which must keep the same order as $\rank()$ and ranges from $1$ to $\leftSize(\tau)$ equates
	\begin{equation*}
		\rankT{\leftSubtree{\tau}}{t} = \rank(t) - \rank(\tau) + \leftSize(\tau)+1.
	\end{equation*}
	Hence, each node~$t \in \leftSubtree(\tau)$ is a top-right node in $\leftSubtree(\tau)$ iff $\rankT{\leftSubtree{\tau}}{t} + \rightSize(t) = \abs{\leftSubtree(\tau)} = \leftSize(\tau)$ (see \cref{result:TopRightOnlyRanks}), which may be rewritten as
	\begin{equation*}
		\rank(t) + \rightSize(t) + 1 = \rankT{\leftSubtree{\tau}}{t} + \rank(\tau) - \leftSize(\tau) - 1  + \rightSize(t) + 1 = \rank(\tau).
	\end{equation*}
	
	Assume now that there exists a node~$t$ of $B$ with $t \notin \leftSubtree(\tau)$ but $\squeeze{\rank(t) = \rank(\tau) - \rightSize(t) - 1}$. Then $\rank(t) < \rank(\tau)$, and $t \notin \subtree(\tau)$. On the other hand, $\rank(\tau) > \rank(t) + \rightSize(t)$ and hence $\tau \notin \subtree(t)$.
	
	Choose the first common ancestor $t$ of $t$ and $\tau$, which due to $t \notin \subtree(\tau)$ and $\tau \notin \subtree(t)$ equals neither $t$ nor $\tau$. By choice of $t$ it holds that $t \in \leftSubtree(t)$ and $\tau \in \rightSubtree(t)$, and thus
	\begin{equation*}
		\rank(t) + \rightSize(t) < \rank(t) < \rank(\tau) - \leftSize(\tau),
	\end{equation*}
	a contradiction to $\rank(t) + \rightSize(t) + 1 = \rank(\tau)$.\manualqed
\end{proof}

Note that since $\rank(t)+\rightSize(t) + 1 \in \discrange*{\rank(t)+1}{n+1}$, $t$ must be either a top-right node or a top-right node in the left subtree of some node~$\tau$.

\begin{samepage}
\begin{corollary}
	\label[corollary]{result:TopRightRanks}
	For each binary tree on $n$ nodes, a node $t$ is
	\begin{itemize}
		\item a top-right node iff $\rank(t) + \rightSize(t) = n$, and
		\item a top-right node in the left subtree of the node~$\tau$ iff $\rank(\tau) = \rank(t) + \rightSize(t) + 1$ iff $\rank(t) + \rightSize(t) < n$.
	\end{itemize}
\end{corollary}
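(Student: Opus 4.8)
The plan is to recognize that the corollary merely assembles the two preceding results together with the dichotomy noted just before its statement, so almost no new work is required. The first bullet is literally the content of \cref{result:TopRightOnlyRanks}, and the first equivalence in the second bullet is literally \cref{result:TopRightSubtree}. Hence I would reduce the whole proof to establishing the single remaining equivalence $\rank(\tau) = \rank(t) + \rightSize(t) + 1 \Leftrightarrow \rank(t) + \rightSize(t) < n$, where on the left the existence (and uniqueness) of a node~$\tau$ of the prescribed rank is to be read as implicit.

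The one auxiliary fact I would isolate first is that $\rank(t) + \rightSize(t) \leq n$ for every node~$t$. This is immediate from the third line of \eqref{equation:BT_SubtreeRanks}, which identifies $\rank(t) + \rightSize(t)$ as the largest rank occurring in the principal subtree~$\subtree(t)$; since the rank function is a bijection onto $\discrange*{1}{n}$, every rank is at most~$n$. Combining this bound with \cref{result:TopRightOnlyRanks} then yields that $\rank(t) + \rightSize(t) < n$ holds precisely when $t$ is \emph{not} a top-right node.

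With this in hand I would close the chain of equivalences. Because $\rank(t) + \rightSize(t) + 1 \geq \rank(t) + 1 \geq 2$ always holds, the value $\rank(t) + \rightSize(t) + 1$ is a legal rank (\ie lies in $\discrange*{1}{n}$) exactly when it is at most~$n$, that is, exactly when $\rank(t) + \rightSize(t) < n$; and since rank is a bijection onto $\discrange*{1}{n}$, such a node~$\tau$ then exists and is unique. This supplies the missing equivalence and, via \cref{result:TopRightSubtree}, the full second bullet. I would additionally record that the two cases are mutually exclusive and jointly exhaustive: if $t$ is a top-right node then $\rank(t) + \rightSize(t) + 1 = n+1$ is not a valid rank, so no witness $\tau$ exists, whereas otherwise the unique $\tau$ produced above makes $t$ a top-right node in $\leftSubtree(\tau)$.

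The main (and essentially only) subtlety I expect is the bookkeeping at the boundary $\rank(t) + \rightSize(t) = n$: one must read the phrase ``$t$ is a top-right node in the left subtree of~$\tau$'' with the existence of~$\tau$ built in, so that this case is correctly excluded rather than producing a spurious $\tau$ of rank $n+1$. Once the bound $\rank(t)+\rightSize(t) \leq n$ is established, everything else is a direct substitution into the two quoted results.
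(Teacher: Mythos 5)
Your proposal is correct and takes essentially the same route as the paper: the paper states this corollary without a separate proof, justifying it by \cref{result:TopRightOnlyRanks}, \cref{result:TopRightSubtree}, and precisely the dichotomy you establish, namely that $\rank(t)+\rightSize(t)+1 \in \discrange*{\rank(t)+1}{n+1}$, so $t$ is either a top-right node or a top-right node in the left subtree of the unique node~$\tau$ of rank $\rank(t)+\rightSize(t)+1$. Your explicit treatment of the boundary case $\rank(t)+\rightSize(t)=n$ simply spells out what the paper leaves implicit.
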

\end{samepage}

Let $\overline{B}$ denote the \enquote{mirrored} version of a binary tree $B$ of size $n$, \ie the binary tree that results from exchanging the left and right childs of each node. The depth, parent and subtree size functions remain unchanged ($\depthT(\overline{B},) = \depthT(B,)$, $\parentT(\overline{B},) = \parentT(B,)$, $\sizeT(\overline{B},) = \sizeT(B,)$), the left and right subtree size functions are exchanged ($\rightSizeT(\overline{B},) = \leftSizeT(B,)$, $\leftSizeT(\overline{B},) = \rightSizeT(B,)$), and the rank function is mirrored ($\rankT(\overline{B},t) = n+1-\rankT(B,)$). By applying this mirroring to the properties of the top-right nodes, we can derive equivalent properties of the top-left nodes. 

\begin{corollary}
	\label[corollary]{result:TopLeftRanks}
	For each binary tree, a node $t$ is
	\begin{itemize}
		\item a top-left node iff $\rank(t) = \leftSize(t) + 1$, and
		\item a top-left node in the right subtree of the node~$\tilde{l}$ with $\rank(\tilde{l}) = \rank(t) - \leftSize(t) - 1$ iff $\rank(t) > \leftSize(t) + 1$.
	\end{itemize}
\end{corollary}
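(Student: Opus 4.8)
The plan is to obtain both characterizations for free from the top-right results by \enquote{mirroring}, exactly as the preceding paragraph sets up. The one fact I would establish first is that the mirror map respects \cref{definition:TopLeftRightNodes}: since $\root(\overline{B}) = \root(B)$ and the right child of any node in $\overline{B}$ is its left child in $B$, the top-right recursion $\topright(\idx+1) = \rightChild{\topright(\idx)}$ carried out in $\overline{B}$ visits exactly the nodes produced by the top-left recursion $\topleft(\idx+1) = \leftChild{\topleft(\idx)}$ in $B$. Hence the top-left nodes of $B$ are precisely the top-right nodes of $\overline{B}$; and because the global mirror sends the right subtree of any node $\tilde{l}$ in $B$ to the left subtree of $\tilde{l}$ in $\overline{B}$, a node is a top-left node in the right subtree of $\tilde{l}$ in $B$ iff it is a top-right node in the left subtree of $\tilde{l}$ in $\overline{B}$. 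This correspondence is what licenses applying \cref{result:TopRightRanks} to $\overline{B}$ and reading the result back through the transformation rules $\rankT(\overline{B},t) = n+1-\rank(t)$, $\rightSizeT(\overline{B},t) = \leftSize(t)$, and $\leftSizeT(\overline{B},t) = \rightSize(t)$.

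With the correspondence in hand, the first bullet is immediate: by \cref{result:TopRightRanks}, $t$ is a top-right node of $\overline{B}$ iff $\rankT(\overline{B},t) + \rightSizeT(\overline{B},t) = n$, and substituting the transformation rules turns the left-hand side into $(n+1-\rank(t)) + \leftSize(t)$, so the condition collapses to $\rank(t) = \leftSize(t) + 1$, which is exactly the claimed characterization of the top-left nodes of $B$.

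For the second bullet I would apply the subtree part of \cref{result:TopRightRanks} to $\overline{B}$: a node $t$ is a top-right node in the left subtree of $\tilde{l}$ in $\overline{B}$ iff $\rankT(\overline{B},\tilde{l}) = \rankT(\overline{B},t) + \rightSizeT(\overline{B},t) + 1$, equivalently iff $\rankT(\overline{B},t) + \rightSizeT(\overline{B},t) < n$. Rewriting the equality through the mirror rules gives $n+1-\rank(\tilde{l}) = (n+1-\rank(t)) + \leftSize(t) + 1$, i.e. $\rank(\tilde{l}) = \rank(t) - \leftSize(t) - 1$, while the inequality becomes $\rank(t) > \leftSize(t) + 1$; translating \enquote{top-right in the left subtree of $\tilde{l}$ in $\overline{B}$} back to \enquote{top-left in the right subtree of $\tilde{l}$ in $B$} yields precisely the stated claim.

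The algebra here is trivial, so I expect the only real obstacle to be the bookkeeping of the first paragraph, namely pinning down that top-left nodes of $B$ correspond to top-right nodes of $\overline{B}$ and that right subtrees in $B$ correspond to left subtrees in $\overline{B}$. Once the mirror map is shown to respect \cref{definition:TopLeftRightNodes} and to swap the two subtrees at every node, both characterizations follow by pure substitution, with no further appeal to the rank ordering or to concavity.
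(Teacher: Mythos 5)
Your proposal is correct and takes exactly the paper's intended route: the paper derives \cref{result:TopLeftRanks} from \cref{result:TopRightRanks} by precisely this mirroring correspondence $B \mapsto \overline{B}$, using the transformation rules $\rankT(\overline{B},t) = n+1-\rankT(B,t)$, $\rightSizeT(\overline{B},) = \leftSizeT(B,)$, $\leftSizeT(\overline{B},) = \rightSizeT(B,)$ and then substituting. Your explicit verification that the mirror map turns the top-left recursion of $B$ into the top-right recursion of $\overline{B}$ and swaps right subtrees with left subtrees merely spells out what the paper leaves implicit in the paragraph preceding the corollary.
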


The root of a binary tree on $n$ nodes is both a top-right and top-left node, and thus its rank equals $\rank{\root(B)} = n - \rightSize(r) = \leftSize(r) + 1$.

\section{The Binary Tree Inequalities}
\label{section:SCS_BinaryTreeInequalities}

\def\oleq{\preccurlyeq}

In this subsection, we show that all lifted inequalities correspond in a one-to-one way to binary trees, which motivates naming them \emph{binary tree inequalities (BTIs)}. Together with $0 \leq \onOff(i,t) \leq 1$, these inequalities induce all facets of $\scsEpigraph$.

We start with an example where $\bticoeff(4)$ and $\bticoeff(9)$ are lifted in both possible orders,
\begin{itemize*}
  \item $\bticoeff(4)$ before $\bticoeff(9)$ with intermediate vector $\onOff(\idx,)$, and
  \item $\bticoeff(9)$ before $\bticoeff(4)$ with intermediate vector $\tildeOnOff(\idx,)$ (\cf \cref{figure:SCS_LiftingExampleStripped}).
\end{itemize*}

\begin{figure}[!ht]
	\centering
	\def\extraScale{0.9}
	\begin{tikzpicture}[x=1cm,y=1cm,scale=\textwidthScaling]
		\def\doit#1{
			\node[anchor=east] at (0,0.5) {$#1$};
			\draw[->] (0,-0.3) -- (0,1.1);
			\draw[->] (-0.3,0) -- (12.5,0) node[anchor=west] {$t$};
			\foreach \i in {1,2,...,12} {
				\node[anchor=north] at (\i-0.5, 0) {$\i$};
				\draw (\i, 0) +(0, -0.08) -- +(0, 0.08);
			}
		}
		
		\def\jo#1{
			\fill[red!20] (2,0) rectangle +(1,1);
			\fill[blue!20] (4,0) rectangle +(1,1);
			\draw [decorate,decoration={brace,amplitude=1.5\smm},yshift=0.5mm] (2,1) -- +(1,0) node [midway,anchor=south,yshift=1mm] {$\offlineNumber{4}{#1}$};
			\draw [decorate,decoration={brace,amplitude=1.5\smm},yshift=0.5mm] (4,1) -- +(1,0) node [midway,anchor=south,yshift=1mm] {$\offlineNumberRight{4}{#1}$};
		}

		\def\je#1{
			\fill[red!20] (6,0) rectangle +(2,1);
			\fill[blue!20] (9,0) rectangle +(1,1);
			\draw [decorate,decoration={brace,amplitude=1.5\smm},yshift=0.5mm] (6,1) -- +(2,0) node [midway,anchor=south,yshift=1mm] {$\offlineNumber{9}{#1}$};
			\draw [decorate,decoration={brace,amplitude=1.5\smm},yshift=0.5mm] (9,1) -- +(1,0) node [midway,anchor=south,yshift=1mm] (h) {$\offlineNumberRight{9}{#1}$};
		}
		
		\begin{scope}[yshift=0mm,scale=\extraScale]
			\drawOnOff{\liftingResult}
			\doit{\onOff(\idx-1,)}
			
			\coordinate (lifting1) at (0,0);
		\end{scope}

		\begin{scope}[xshift=-8mm, yshift=-24mm,scale=0.7*\extraScale]
			\jo{\onOff(\idx,)}
			
			\drawOnOff{1/0,2/1,3/0,4/1,5/0,6/1,7/0,8/0,9/0,10/0,11/1,12/0}
			
			\doit{\onOff(\idx,)}

			\coordinate (liftingA) at (6,0);
		\end{scope}
		
		\begin{scope}[xshift=43mm,yshift=-41mm,scale=0.7*\extraScale]
			\je{\tildeOnOff(\idx,)}
			
			\drawOnOff{1/0,2/1,3/0,4/0,5/0,6/1,7/0,8/0,9/1,10/0,11/1,12/0}
			\doit{\tildeOnOff(\idx,)}

			\coordinate (liftingB) at (10,0);
		\end{scope}

		\begin{scope}[yshift=-66mm,scale=\extraScale]
			\jo{\onOff(\idx+1,)}
			\je{\onOff(\idx+1,)}
			
			\drawOnOff{1/0,2/1,3/0,4/1,5/0,6/1,7/0,8/0,9/1,10/0,11/1,12/0}
			\doit{\onOff(\idx+1,)}

			\coordinate (lifting3) at (0,0);
		\end{scope}
		
		\begin{scope}[thick]
			\tikzset{every node/.append style={rectangle,inner sep=2mm}}
			
			\draw[->,shorten <=6.5\smm,shorten >=9\smm] (lifting1 -| liftingA) to node[pos=0.5,anchor=west] {lifting $\bticoeff(4)$} (liftingA);
			
			\draw[->,shorten <=6.5\smm, shorten >=20.5\smm] (liftingA) to node[pos=0.38,anchor=east] {lifting $\bticoeff(9)$}  (lifting3 -| liftingA);
			
			\draw[->,shorten <=6.5\smm,shorten >=17\smm] (lifting1 -| liftingB) to node[pos=0.45,anchor=west] {lifting $\bticoeff(9)$}   (liftingB);

			\draw[->,shorten <=5.3\smm,shorten >=12\smm] (liftingB) to node[pos=0.35,anchor=west] {lifting $\bticoeff(4)$} (lifting3 -| liftingB);
		\end{scope}
	\end{tikzpicture}
	\caption{Lifting coefficients~$\protect\bticoeff(4)$ and $\protect\bticoeff(9)$ in both orders, with intermediate vectors.}
	\label{figure:SCS_LiftingExampleStripped}
\end{figure}

\noindent If the coefficient~$\bticoeff(6)$ is already lifted, the relative order in which $\bticoeff(4)$ and $\bticoeff(9)$ are lifted does not influence the period counts,
\begin{alignat*}{2}
	\offlineNumber{4}{\onOff(\idx,)} &= \offlineNumber{4}{\onOff(\idx+1,)} &\qquad\text{and}\qquad \offlineNumberRight{4}{\onOff(\idx,)} &= \offlineNumberRight{4}{\onOff(\idx+1,)},\\
	\offlineNumber{9}{\tildeOnOff(\idx,)} &= \offlineNumber{9}{\onOff(\idx+1,)} &\qquad\text{and}\qquad \offlineNumber{9}{\tildeOnOff(\idx,)} &= \offlineNumber{9}{\onOff(\idx+1,)}.
\end{alignat*}
Thereby, the values of the lifted coefficients~$\bticoeff(4)$ and $\bticoeff(9)$ are equal for both cases. This holds in general: As soon as a coefficient $\bticoeff(t)$ has been lifted, for each subsequently lifted coefficient~$\bticoeff(\tau)$ with $\tau < t$ we have $\offlineNumberRight{\tau}{\onOff(i,)} < t - \tau$. Thus, the period counts, and by extension the coefficient~$\bticoeff(\tau)$, do not depend on the lifting order of coefficients~$\bticoeff(\tilde{t})$ with ${\tilde{t}} > t$. Analogously if $t' > t$, the coefficient~$\bticoeff(\tau)$ does not depend on the lifting order of coefficients~$\bticoeff(\tilde{t})$ with ${\tilde{t}} < t$.

\def\a{{t_1}}
\def\b{{t_2}}

A lifting order~$\sigma$ corresponds to a linear order~$\oleq_\sigma$ on $\Periods$ with
\begin{equation*}
 	\forall\, \a, \b \in \Periods: \qquad \a \oleq_\sigma \b \quad:\Leftrightarrow\quad \sigma^{-1}(\a) \leq \sigma^{-1}(\b).
\end{equation*}
As argued above, the lifted coefficients~$\bticoeff(t_1),\bticoeff(t_2)$ do not depend on whether $t_1 \oleq_\sigma t_2$ or $t_2 \oleq_\sigma t_1$ if
\begin{equation*}
	\exists\ t_3 \in (t_1,t_2) \cup (t_1,t_2),\quad t_3 \oleq_\sigma t_1, \quad t_3 \oleq_\sigma t_2.
\end{equation*}
Eliminating such relationships from $\oleq_\sigma$ yields a partial order~$\oleq_\sigma'$ which fully determines the lifted inequality, \ie each linearization of this partial order leads to the same lifted inequality. \cref{figure:SCS_PermutationAndTree} shows an exemplary partial order (twice, as a Hasse-diagram) and two possible linearizations~$\oleq_{\sigma}$ and $\oleq_{\overline{\sigma}}$ represented by the permutations~$\sigma$ and $\overline{\sigma}$.

\begin{figure}[htb]
	\centering
	\begin{tikzpicture}[x=3.9mm,y=4.625mm,scale=\textwidthScaling]
		\tikzstyle{permutation}=[draw=none,inner sep=0pt,rectangle,fill=none]
		\tikzstyle{permutation_to_tree}=[blue!15]
		
		\newcommand{\drawPermTree}[2]{
			\foreach \k/\s in {#1} {
				\node[tree] (v\k) at (\k, -\s) {\k};

				\node[permutation,anchor=east] (t\s) at (-1, -\s) {\k};
				\ifthenelse{\s < 12}{
					\node[permutation, anchor=base west] at (t\s.base east) {,};
				}{}
				\begin{pgfonlayer}{background}
					\ifthenelse{\isodd{\s}}{
 						\fill[permutation_to_tree] ($(t\s.west) + (0,0.5)$) -| (v\k.center)  |- ($(t\s.west) - (0,0.5)$) -- ($(t\s.west) + (0,0.5)$);
					}{}
				\end{pgfonlayer}
			}
		
			\node[permutation,anchor=east] at (t1.west) {$\squeeze{#2 =(}$};
			\node[permutation,anchor=west] at (t12.east) {$)$};

			\liftingTreeEdges
		}
		
		\expandafter\drawPermTree\expandafter{\liftingA}{\sigma}
		\begin{scope}[xshift=7.1cm]
			\expandafter\drawPermTree\expandafter{\liftingB}{\overline{\sigma}}
		\end{scope}
	\end{tikzpicture}
	\caption{A partial order determining the lifted coefficients (twice, as a Hasse-diagram) and two possible linearizations (as permutations~$\sigma$ and $\overline{\sigma}$) leading to the same lifted inequality.}
	\label{figure:SCS_PermutationAndTree}
\end{figure}

As \cref{figure:SCS_PermutationAndTree} suggests, the Hasse-diagrams of these partial orders are binary trees, where each coefficient~$\bticoeff(t)$ corresponds to a node~$t$. The partial order then prescribes that each coefficient~$\bticoeff(t)$ must be lifted before the coefficients associated with the descendants of node~$t$.

Observe that for a lifting order~$\sigma$ and the corresponding linear order~$\oleq_\sigma$, the vectors~$\onOff(\idx,)$ encountered in the lifting process by definition \eqref{equation:SCS_LiftingVectors} fulfill
\begin{equation*}
	\forall\, \idx \in \Periods, t \in \Periods: \qquad \onOff(j,t) = \begin{cases}
		1 & \text{if $t \oleq_\sigma \sigma(\idx)$,}\\
		0 & \text{else.}
	\end{cases}
\end{equation*}
Therefore, the offline lengths~$\offlineNumber{\sigma(\idx)}{\onOff(\idx,)}$ and $\offlineNumberRight{\sigma(\idx)}{\onOff(\idx,)}$, which define $\bticoeff{\sigma(\idx)}$, equate to
\begin{align*}
	\offlineNumber{\sigma(\idx)}{\onOff(\idx,)} &= \max \set[\big]{l \in \discrange*{0}{\sigma(\idx)-1}\: \given \forall\, t \in \discrange*{\sigma(\idx)-l}{\sigma(\idx)-1}: t \oleq_\sigma \sigma(\idx)}, \text{ and}\\
	\offlineNumberRight{\sigma(\idx)}{\onOff(\idx,)} &= \max \set[\big]{l \in \discrange*{0}{T-\sigma(\idx)} \given \forall\, t \in \discrange*{\sigma(\idx)+1}{\sigma(\idx)+l}: t \oleq_\sigma \sigma(\idx)}.
\end{align*}
As argued, these lengths remain unchanged when replacing $\oleq_\sigma$ by the corresponding partial order~$\oleq_\sigma'$. Furthermore, their above representation shows that they can be derived from the Hasse-diagram of $\oleq_\sigma'$: The size~$\leftSize{\sigma(\idx)}$ of the left subtree of $\sigma(\idx)$ equals $\offlineNumber{\sigma(\idx)}{\onOff(\idx,)}$ and the size~$\rightSize{\sigma(\idx)}$ of the right subtree of~$\sigma(j)$ equals $\offlineNumberRight{\sigma(\idx)}{\onOff(\idx,)}$ (see \cref{figure:SCS_LiftingFromTreeExample}).

\begin{figure}[htb]
	\providecommand{\liftNext}{{q}}
	\centering
	\begin{tikzpicture}[x=1cm,y=1cm,scale=\textwidthScaling]
		\tikzstyle{subtree}=[fill=blue!10, rounded corners=\convexRadiusA]

		\begin{scope}[yscale=0.35,yshift=158mm,xshift=-5mm]
			\foreach \k/\s in \liftingAunlifted {
				\node[tree] (v\k) at (\k, -\s) {\k};
			}
			\foreach \k/\s in \liftingAlifted {
				\node[tree, lifted] (v\k) at (\k, -\s) {\k};
			}

			\liftingTreeEdges
		\end{scope}

		\drawOnOff{\liftingResult}

		\node[anchor=east] at (0,0.5) {$\onOff(\idx,)$};
		\draw[->] (0,-0.3) -- (0,1.3);
		\draw[->] (-0.3,0) -- (12.5,0) node[anchor=west] {$t$};
		\foreach \i in {1,2,...,12} {
			\node[anchor=north] (t\i) at (\i-0.5, 0) {$\i$};
			\draw (\i, 0) +(0, -0.08) -- +(0, 0.08);
		}

		\begin{pgfonlayer}{background}
			\fill[subtree,rounded corners=0pt] \convexcycle{v3,v4,v5}{\convexRadiusA};
			\fill[subtree,rounded corners=0pt] \convexcycle{v7,v9,v10,v8}{\convexRadiusA};
			\fill[subtree] ($(v3.center) - (\convexRadiusA,0)$) |- (4.5,-0.57) -| ($(v5.center)+(\convexRadiusA,0)$);
			\fill[subtree] ($(v7.center) - (\convexRadiusA,0)$) |- (7,-0.57) -| ($(v10.center)+(\convexRadiusA,0)$);
		\end{pgfonlayer}
		\node[anchor=north] at (4,0 |- v8.north) {$\leftSubtree{\sigma(\idx)}$};
		\node at ($(v8)!0.5!(v10)$) {$\rightSubtree{\sigma(\idx)}$};
		\draw [decorate,decoration={brace,amplitude=3\smm,mirror}] ($(2.1,0 |- t1.south)+(0,0.5mm)$) -- ($(4.9,0 |- t1.south)+(0,0.5mm)$) node [midway,xshift=-4mm,anchor=north,yshift=-2\smm] (subtext) {$\offlineNumber{\sigma(\idx)}{\onOff(\idx,)} = \leftSize(\sigma(\idx))$};
		\draw [decorate,decoration={brace,amplitude=3\smm,mirror}] ($(6.1,0 |- t1.south)+(0,1mm)$) -- ($(9.9,0 |- t1.south)+(0,1mm)$) node (h1) [midway,anchor=north,yshift=-2\smm] {$\offlineNumberRight{\sigma(\idx)}{\onOff(\idx,)} = \rightSize(\sigma(\idx))$};
		\node[anchor=north] (idx)  at (v6 |- 0,-0.4) {$\sigma(\idx)$};
		\begin{pgfonlayer}{background}
			\fill[red!10,yshift=-0.57cm,rounded corners=2ex]
			              ($(idx.south) - (0.5,0.1)$) rectangle ($(v6.north) + (0.5,0.12)$)
			              (10,0) rectangle ($(v11.north) + (0.5,0.12)$)
			              (1,0) rectangle ($(v2.north) + (0.5,0.12)$);
		\end{pgfonlayer}
		
	\end{tikzpicture}
	\caption[The lifting step as shown in \cref{figure:SCS_LiftingExample}, with a possible lifting order represented by a binary tree]{The lifting step as shown in \cref{figure:SCS_LiftingExample}, with a possible lifting order represented by a binary tree $B$. The offline lengths~$\offlineNumber{\sigma(\idx)}{\onOff(\idx,)}$ and $\offlineNumberRight{\sigma(\idx)}{\onOff(\idx,)}$ adjacent to period~$\sigma(\idx)$ are equal to the sizes of the left and right subtree of node~$\sigma(\idx)$ in $B$.}
	\label{figure:SCS_LiftingFromTreeExample}
\end{figure}
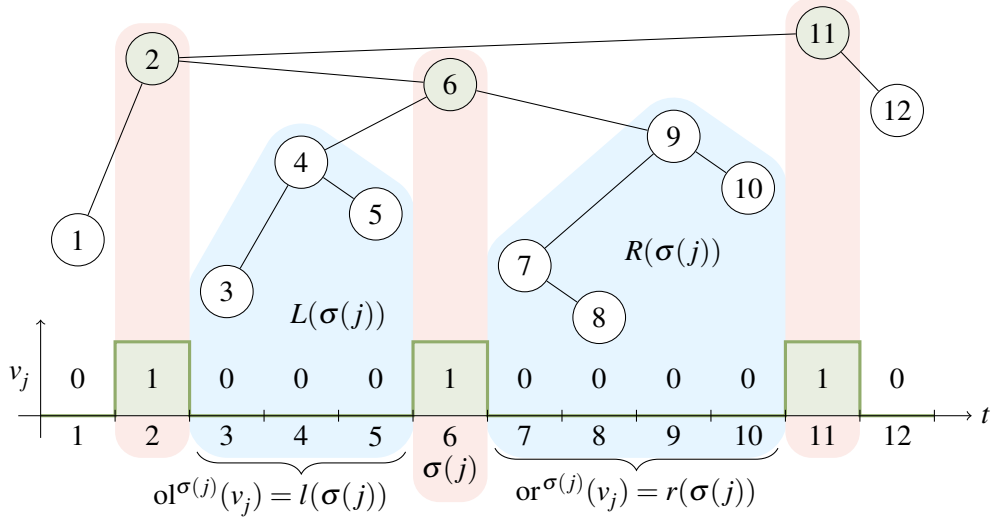

A coefficient~$\bticoeff(t)$ therefore solely depends on the partial order~$\oleq_\sigma'$ and may be expressed as
\begin{equation*}
	\bticoeff(t) = \scsSumDiff{t}(\leftSize{t},\rightSize{t}),
\end{equation*}
where $t = \sigma(j)$, and $\leftSize(t), \rightSize(t)$ are determined by the partial order.

Each of the start-up cost terms in $\scsSumDiff{t}(\leftSize(t),\rightSize(t))$ corresponds to the cost incurred when starting up after being offline during the periods contained in either the left subtree~$\leftSubtree(t)$ of~$t$, the right subtree~$\rightSubtree(t)$ of~$t$, or the principal subtree~$\subtree(t)$ of~$t$ (see \eqref{equation:BT_SubtreeRanks}),
\begin{equation*}
	\scsSumDiff{t}(\leftSize(t),\rightSize(t)) = \underbrace{\StartupCost{i}{t}{\leftSize(t)}}_{\text{offline in $\leftSubtree(t)$}}
	+ \underbrace{\StartupCost{i}{t+\rightSize(t)+1}{\rightSize(t)}}_{\text{offline in $\rightSubtree(t)$}}
	- \underbrace{\StartupCost{i}{t+\rightSize(t)+1}{\leftSize(t)+1+\rightSize(t)}}_{\text{offline in $\subtree(t)$}}
\end{equation*}

To simplify the notation, we generalize the definition (see \eqref{equation:SC_FunctionForPeriods}) of the offline length~$\offlineLength(,)$: for each $B \in \scsBinaryTrees$ and $t \in \Periods$, we abbreviate
\begin{gather}
	\label{equation:SC_OfflineLengthTree}
	\offlineLengthTree{\subtree(t)} := \offlineLength{t+\rightSize(t)+1}{\size(t)},\\[-1.5ex]
	\intertext{and consequently, by \eqref{equation:BT_SubtreeRanks}}
	\label{equation:SC_OfflineLengthTreeLeftRight}
	\offlineLengthTree{\leftSubtree(t)} = \offlineLength{t}{\leftSize(t)} \quad\text{and}\quad \offlineLengthTree{\rightSubtree(t)} = \offlineLength{t+\rightSize(t)+1}{\rightSize(t)}.
\end{gather}
Since by definition $\StartupCost(i,t,l) = \StartupCostFunction{i}{\offlineLength(t,l)}$, it follows that
\begin{equation}
	\label{equation:SCS_SumUpdateDiffTree}
	\scsSumDiff{t}(\leftSize(t),\rightSize(t)) = \StartupCostFunction(i,\offlineLengthTree{\leftSubtree(t)}) + \StartupCostFunction(i,\offlineLengthTree{\rightSubtree(t)}) - \StartupCostFunction{i}{\offlineLengthTree{\subtree(t)}}.
\end{equation}

\newpage
\begin{definition}
	\label[definition]{definition:SCS_BinaryTreeInequality}
	A \emph{rank-labeled} binary tree is a binary tree~$B$ on nodes~$\discrange{1}{n}$, where $n \in \naturals$, and $\rank(i) = i$ for all $i \in \discrange{1}{n}$. Let $\scsBinaryTrees$ denote the family of all rank-labeled binary trees on $\Periods$. For each $B \in \scsBinaryTrees$, we define the \emph{binary tree inequality (BTI)} as
	\begin{equation*}
		\startupCostSum \geq \sum_{t \in \Periods} \scsSumDiff{t}( \leftSize(t), \rightSize(t)) \, \onOff(i,t),%
	\end{equation*}
	using the sizes~$\leftSize(t)$ or $\rightSize(t)$ of the left or right subtrees of~$t$, respectively, and $\scsSumDiff{t}$ as defined in \cref{result:SCS_SumUpdate}.
\end{definition}

In the following, we confirm that the binary tree inequalities, together with the trivial inequalities~$0 \leq \onOff(i,t) \leq 1$, define all non-trivial facets of $\scsEpigraph$ by proving that
\begin{itemize*}
  \item they are feasible (\cref{result:SCS_EpigraphInPolyhedron}),
  \item they are fulfilled with equality by all points $(\onOff(\idx,),\DiscreteStartupCostSum(\onOff(\idx,)))$ encountered during the lifting process (\cref{result:SCS_PointsOnFacet}), and
  \item these points are linearly independent (\cref{result:SCS_Facets}).
\end{itemize*}
Moreover, we will show that all points not in $\scsEpigraph$ can be separated by the BTIs or the trivial inequalities $0 \leq \onOff(i,t) \leq 1$ in $\bigO(T)$.

In the following, we need to put the vertices $(\onOff(i,),\DiscreteStartupCostSum(\onOff(i,)))$ of the epigraph $\scsEpigraph$ into relation with the binary trees $B \in \scsBinaryTrees$.
\begin{definition}
	\label[definition]{definition:SCS_Induced_Subtree}
	For each $B \in \scsBinaryTrees$ with edges $E$ and $\onOff(i,) \in \set{0,1}^T$, define $\oneSubtree{\onOff(i,)}$ as the subgraph of $B$ induced by the 1-entries of $\onOff(i,)$,
	\begin{equation*}
		\oneSubtree{\onOff(i,)} := (V_S, E_S) \quad\text{where}\quad  V_S := \set[\big]{t \in \Periods \given \onOff(i,t) = 1} \quad\text{and}\quad E_S := \set[\big]{e \in E \given e \subseteq V_S}.
	\end{equation*}
\end{definition}
Note that in general, $\oneSubtree{\onOff(i,)}$ is not connected.

The proof that the binary tree inequalities are valid implicitly follows from the concavity of $\StartupCostFunction(i,)$, which is exploited through the monotonicity of $\scsSumDiff{t}$ (\cref{result:SCS_SumDiffIncreasing}).

\begin{lemma}
	\label[lemma]{result:SCS_EpigraphInPolyhedron}
	For each $B \in \scsBinaryTrees$, the BTI induced by $B$ is valid for $\scsEpigraph$.
\end{lemma}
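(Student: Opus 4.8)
The plan is to exploit the generator description of the epigraph in \eqref{equation:SCS_EpigraphVRepresentation}. The BTI is a single linear inequality of the homogeneous form \eqref{equation:SCS_LiftedInequality}, and $\scsEpigraph = \conv(\scsVertices) + \pos(\unitVec{T+1})$, so it is valid for $\scsEpigraph$ iff it holds at every vertex $(\onOff(i,),\DiscreteStartupCostSum(\onOff(i,)))$ with $\onOff(i,) \in \set{0,1}^T$ and is satisfied along the recession ray $\unitVec{T+1}$. The latter is immediate because the coefficient of $\startupCostSum$ equals $+1$. Thus the whole claim reduces to showing, for every $\onOff(i,) \in \set{0,1}^T$,
\[
\DiscreteStartupCostSum(\onOff(i,)) \geq \sum_{t \in \Periods} \scsSumDiff{t}(\leftSize(t),\rightSize(t))\,\onOff(i,t);
\]
that is, the true summed start-up cost dominates the tree coefficients summed over the online periods.

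To evaluate the left-hand side I would rebuild $\onOff(i,)$ from the zero vector by switching its online periods on one at a time, in an order that respects $B$ in the sense that no node is switched on before its ancestors (a linear extension of the ancestor order restricted to $\set{t \given \onOff(i,t)=1}$, equivalently the restriction of a lifting order~$\sigma$). Each time a period~$t$ is switched on, \cref{result:SCS_SumUpdate} shows the summed cost grows by exactly $\scsSumDiff{t}(l_t,r_t)$, where $l_t$ and $r_t$ are the lengths of the offline runs immediately to the left and right of~$t$ at that moment. Telescoping over all online periods then gives $\DiscreteStartupCostSum(\onOff(i,)) = \sum_{t:\,\onOff(i,t)=1}\scsSumDiff{t}(l_t,r_t)$.

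The crux is the comparison $l_t \geq \leftSize(t)$ and $r_t \geq \rightSize(t)$, which follows purely from the tree geometry. By \eqref{equation:BT_SubtreeRanks} the left subtree $\leftSubtree(t)$ occupies exactly the periods $t-\leftSize(t),\dots,t-1$ and the right subtree $\rightSubtree(t)$ the periods $t+1,\dots,t+\rightSize(t)$, and every node of these subtrees is a descendant of~$t$. Since the switch-on order respects ancestry, none of these descendants has been activated yet when~$t$ is switched on, so all of these adjacent periods are still offline; hence the offline runs beside~$t$ have length at least $\leftSize(t)$ and $\rightSize(t)$. Applying the monotonicity of $\scsSumDiff{t}$ from \cref{result:SCS_SumDiffIncreasing} term by term yields $\scsSumDiff{t}(l_t,r_t) \geq \scsSumDiff{t}(\leftSize(t),\rightSize(t))$, and summing over all online periods closes the argument.

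I expect the only point needing care to be this monotonicity step together with its boundary cases: one must check that the encountered pairs $(l_t,r_t)$ stay inside the domain $\discrange*{0}{t-1}\times\discrange*{0}{T-t}$ of $\scsSumDiff{t}$ (they do, as an offline run cannot exceed $t-1$ on the left or $T-t$ on the right), so that \cref{result:SCS_SumDiffIncreasing} applies verbatim, including the case $t+r=T$ where $\scsSumDiff{t}$ takes its second branch. The reduction to vertices and the telescoping are routine; the real content is that switching periods on in tree order only ever enlarges the adjacent offline runs relative to the coefficient-defining configuration, which is exactly where the concavity of $\StartupCostFunction(i,)$ enters, through \cref{result:SCS_SumDiffIncreasing}.
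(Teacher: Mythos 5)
Your proposal is correct and takes essentially the same route as the paper: the paper proves validity by induction on the number of active nodes, at each step deleting from $\oneSubtree{\onOff(i,)}$ a node whose principal subtree contains no other active node, which is precisely the reverse of your ancestry-respecting switch-on order, so your telescoping sum and the paper's induction are the same argument in opposite directions. Both rest on the identical three ingredients—\cref{result:SCS_SumUpdate} for the cost increment, the subtree-rank bounds $l_t \geq \leftSize(t)$ and $r_t \geq \rightSize(t)$, and the monotonicity of $\scsSumDiff{t}$ from \cref{result:SCS_SumDiffIncreasing}—and your reduction to vertices plus the recession ray matches the paper's convexity remark.
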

\begin{proof}
	Due to the convexity of $\scsEpigraph$ and since a binary tree inequality bounds $\startupCostSum$ only from below, it suffices to prove that all vertices $(\onOff(i,),\DiscreteStartupCostSum(\onOff(i,))) \in \scsVertices$ of $\scsEpigraph$ fulfill all BTIs. To do so, for each $B \in \scsBinaryTrees$, we prove that its induced BTI is valid by induction over the number of nodes $n$ in its induced subtree $\oneSubtree{\onOff(i,)}$.
	
	For $n = 0$, the only such point is $0 \in \scsVertices$. Since each BTI is homogeneous, $0$ fulfills all of them with equality. For $n \geq 1$, choose a leaf $t$ in $\oneSubtree{\onOff(i,)}$, \ie a node such that the subtree of $t$ in $B$ does not contain any other nodes from $\oneSubtree{\onOff(i,)}$. Define the vector $\tildeOnOff(,)$ as
	\begin{align*}
		\tildeOnOff(i,\tau) &:= \begin{cases}
			\onOff(i,\tau) & \text{if } \tau \neq t,\\
			0 & \text{if } \tau = t,
		\end{cases},
	\end{align*}
	differing from $\onOff(i,)$ only in period $t$. By \cref{result:SCS_SumUpdate}, we get
	\begin{equation*}
		\DiscreteStartupCostSum(\onOff(i,)) = \DiscreteStartupCostSum(\tildeOnOff(i,)) + \scsSumDiff{t}(\offlineNumber{t}{\onOff(i,)},\offlineNumberRight{t}{\onOff(i,)}).
	\end{equation*}
	These vectors, the induced subtrees, and the offline lengths $\offlineNumber{t}{\onOff(i,)}$, $\offlineNumberRight{t}{\onOff(i,)}$ are shown in \cref{figure:SCS_LiftUnexact}.

	\begin{figure}[bht]
	\centering\begin{tikzpicture}[x=0.6cm,y=-1.2cm,scale=\textwidthScaling]
		\LiftingTrees
		
		\node[tree] (n1)  at ( 1,2)  {1};
		\node[tree] (n2)  at ( 2,1)  {2};
		\node[tree] (n3)  at ( 3,2)  {3};
		\node[tree] (n4)  at ( 4,3)  {4};
		\node[tree] (n5)  at ( 5,0)  {5};
		\node[tree] (n6)  at ( 6,3)  {6};
		\node[tree,pin={90:$t$}, draw=marker] (n7)  at ( 7,2)  {7};
		\node[tree] (n8)  at ( 8,3)  {8};
		\node[tree] (n9)  at ( 9,1)  {9};
		\node[tree] (n10) at (10,3) {10};
		\node[tree] (n11) at (11,2) {11};
		
		\draw (n5) edge (n2) edge (n9);
		\draw (n2) edge (n1) edge (n3);
		\draw (n3) edge (n4);
		\draw (n9) edge (n7) edge (n11);
		\draw (n7) edge (n6) edge (n8);
		\draw (n11) edge (n10);
		
		\begin{pgfonlayer}{background}
			\coordinate (helper) at ($(n2)+(1.6,0)$);
			\fill[S1] \convexpath{helper,n2,n9,n7}{\convexRadiusB};
			\fill[white] \convexcycle{n9,n2}{\convexRadiusA};	%
			\fill[S2] \convexcycle{n9,n2}{\convexRadiusA};
			\node[S1text,anchor=south,rectangle,draw=none,inner sep=0pt] at ($(n7)!0.4!(n3)$) {$\oneSubtree{\onOff(i,)}$};
			\node[S2text,anchor=base] at ($(n2.base)!0.3!(n9.base)$) {$\oneSubtree{\tildeOnOff(i,)}$};
		\end{pgfonlayer}
		
		\foreach \i in {1,3,4,5,6,7,8,10,11} {
			\node[vector] (vt\i) at (\i,4.4) {$0$};
		}
		\foreach \i in {2,9} {
			\fill[S2fill] (\i,4.4) circle (1.5\sex);
			\node[vector] (vt\i) at (\i,4.4) {$1$};
		}
		\node[vector,anchor=base east] at (vt1.base west) {$\tildeOnOff(i,) = ($};
		\node[vector,anchor=base west] at (vt11.base east) {$)$};
		\foreach \i in {1,3,4,5,6,8,10,11} {
			\node[vector] (v\i) at (\i,5) {$0$};
		}
		\foreach \i in {2,7,9} {
			\fill[S1fill] (\i,5) circle (1.5\sex);
			\node[vector] (v\i) at (\i,5) {$1$};
		}
		\node[vector,anchor=base east] at (v1.base west) {$\onOff(i,) = ($};
		\node[vector,anchor=base west] at (v11.base east) {$)$};

		\draw[markerLine] (n2) -- (vt2);
		\draw[markerLine] (n7) -- (vt7);
		\draw[markerLine] (n9) -- (vt9);
		\draw[draw=marker,decorate,decoration={brace,amplitude=1.5\smm,mirror}] ($(2,0 |- n4.south)-(0,0.5mm)$) -- ($(7,0 |- n4.south)-(0,0.5mm)$) node[anchor=north,rectangle,inner sep=0pt,draw=none,fill=none,markerText,midway,yshift=-2\smm] {$\offlineNumber{7}{\onOff(i,)}$};
		\draw[draw=marker,decorate,decoration={brace,amplitude=1.5\smm,mirror}] ($(7,0 |- n4.south)-(0,0.5mm)$) -- ($(9,0 |- n4.south)-(0,0.5mm)$) node[anchor=north,rectangle,inner sep=0pt,draw=none,fill=none,markerText,midway,yshift=-2\smm] {$\offlineNumberRight{7}{\onOff(i,)}$};
	\end{tikzpicture}
	\caption[Removing the leaf from the induced subgraph \oneSubtree{\protect\onOff(i,)}$ results in the subgraph $\protect\oneSubtree{\protect\tildeOnOff(i,)}, induced by $\protect\tildeOnOff(i,)$$]{Removing the leaf $t = 7$ from the induced subgraph~$\oneSubtree{\onOff(i,)}$ results in the subgraph $\oneSubtree{\tildeOnOff(i,)}$ induced by~$\tildeOnOff(i,)$. The lengths $\offlineNumber{7}{\onOff(i,)}$ and $\offlineNumberRight{7}{\onOff(i,)}$ denote the offline lengths before and after period~7 (see \eqref{equation:SC_OfflineNumber} and \eqref{equation:SCS_OfflineNumberRight}).}
	\label{figure:SCS_LiftUnexact}
\end{figure}
	
	By the choice of $t$, its left subtree, which contains the nodes~$t-\leftSize(t),\ldots,t-1$, does not contain any nodes from $\oneSubtree{\onOff(i,)}$. Hence, $\onOff{i}{t-\leftSize(t)} = \ldots = \onOff(i,t-1) = 0$, implying $\offlineNumber{t}{\onOff(i,)} \geq \leftSize(t)$. Since the right subtree of $t$ does not contain any nodes from $\oneSubtree{\onOff(i,)}$ either, we analogously obtain $\offlineNumberRight{t}{\onOff(i,)} \geq \rightSize(t)$.
	
	\newpage
	Thus, using the monotonicity of $\scsSumDiff{t}$, it holds that
	\begin{align*}
		\DiscreteStartupCostSum(\onOff(i,)) &= \DiscreteStartupCostSum(\tildeOnOff(i,)) + \scsSumDiff{t}(\offlineNumber{t}{\onOff(i,)},\offlineNumberRight{t}{\onOff(i,)}) \geq \DiscreteStartupCostSum(\tildeOnOff(i,)) + \scsSumDiff{t}(\leftSize(t), \rightSize(t))\\
			&\withtextl{ind.hyp.}{\geq} \sum_{\tau \in \Periods} \scsSumDiff{\tau}(\leftSize(\tau), \rightSize(\tau)) \, \tildeOnOff(i,\tau) + \scsSumDiff{t}(\leftSize(t), \rightSize(t))
			= \sum_{\tau \in \Periods} \scsSumDiff{\tau}(\leftSize(\tau), \rightSize(\tau)) \, \onOff(i,\tau).\eqqed{1em}
	\end{align*}
\end{proof}

The central argument of the proof of the last lemma is that in each step of the induction, the inequality
\begin{equation*}
	\scsSumDiff{t}(\offlineNumber{t}{\onOff(i,)},\offlineNumberRight{t}{\onOff(i,)}) \geq \scsSumDiff{t}(\leftSize(t),\rightSize(t))
\end{equation*}
holds due to $\offlineNumber{t}{\onOff(i,)} \geq \leftSize(t)$ and $\offlineNumberRight{t}{\onOff(i,)} \geq \rightSize(t)$. Assume that, for a given binary tree $B$ and a vertex of $\scsEpigraph$, this inequality is fulfilled with equality in each induction step. Then the vertex also fulfills the BTI induced by $B$ with equality. We characterize such vertices in the next lemma.
\begin{lemma}
	\label[lemma]{result:SCS_PointsOnFacet}
	For each $B \in \scsBinaryTrees$, $(\onOff(i,),\DiscreteStartupCostSum(\onOff(i,))) \in \scsVertices$, if the induced subgraph $\oneSubtree{\onOff(i,)}$ is a tree containing $\root(B)$, then $(\onOff(i,),\DiscreteStartupCostSum(\onOff(i,)))$ fulfills the BTI induced by $B$ with equality.
\end{lemma}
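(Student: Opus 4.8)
The plan is to re-run the induction of \cref{result:SCS_EpigraphInPolyhedron}, but to upgrade each inequality used there to an equality. As foreshadowed after that lemma, it suffices to show that for the leaf removed in every induction step the offline counts agree \emph{exactly} with the subtree sizes; the additional hypothesis that $\oneSubtree{\onOff(i,)}$ is connected through $\root(B)$ is precisely what forces this. I would induct on the number $n$ of nodes of $\oneSubtree{\onOff(i,)}$. The base case is $n = 1$, where the single node must be $\root(B)$; since $\root(B)$ is simultaneously a top-left and a top-right node, \cref{result:TopLeftRanks,result:TopRightRanks} give $\leftSize{\root(B)} = \root(B) - 1$ and $\root(B) + \rightSize{\root(B)} = T$, so that
\begin{equation*}
	\DiscreteStartupCostSum(\unitVec{\root(B)}) = \StartupCost{i}{\root(B)}{\root(B)-1} = \scsSumDiff{\root(B)}(\leftSize{\root(B)}, \rightSize{\root(B)}),
\end{equation*}
which is exactly the right-hand side of the BTI evaluated at $\unitVec{\root(B)}$.

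For the induction step with $n \geq 2$, I would choose a node $t \neq \root(B)$ of maximal depth in $\oneSubtree{\onOff(i,)}$, so that $\subtree(t)$ meets $\oneSubtree{\onOff(i,)}$ only in $t$, and set $\tildeOnOff(i,) := \onOff(i,) - \unitVec{t}$. Then $\oneSubtree{\tildeOnOff(i,)} = \oneSubtree{\onOff(i,)} \setminus \set{t}$ is again a tree containing $\root(B)$, so the induction hypothesis applies to $\tildeOnOff(i,)$. By \cref{result:SCS_SumUpdate},
\begin{equation*}
	\DiscreteStartupCostSum(\onOff(i,)) = \DiscreteStartupCostSum(\tildeOnOff(i,)) + \scsSumDiff{t}(\offlineNumber{t}{\onOff(i,)}, \offlineNumberRight{t}{\onOff(i,)}),
\end{equation*}
and the whole proof reduces to establishing the two identities $\offlineNumber{t}{\onOff(i,)} = \leftSize(t)$ and $\offlineNumberRight{t}{\onOff(i,)} = \rightSize(t)$. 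Granting these, the second summand equals $\scsSumDiff{t}(\leftSize(t), \rightSize(t))$ \emph{without any appeal to the monotonicity} of \cref{result:SCS_SumDiffIncreasing}, and adding the induction hypothesis for $\tildeOnOff(i,)$ (recalling $\onOff(i,t) = 1$ while $\tildeOnOff(i,t) = 0$ and the two vectors agree elsewhere) yields the BTI with equality for $\onOff(i,)$, verbatim as in \cref{result:SCS_EpigraphInPolyhedron} but with every $\geq$ replaced by $=$.

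The crux, and the step I expect to require the most care, is proving these exact identities. The bounds $\offlineNumber{t}{\onOff(i,)} \geq \leftSize(t)$ and $\offlineNumberRight{t}{\onOff(i,)} \geq \rightSize(t)$ hold as in \cref{result:SCS_EpigraphInPolyhedron}, since the choice of $t$ forces its left and right subtrees (the ranks $t - \leftSize(t), \ldots, t-1$ and $t+1, \ldots, t + \rightSize(t)$) to be offline. For the reverse inequality on the left I would invoke \cref{result:TopLeftRanks}: either $t$ is a top-left node, so $t = \leftSize(t) + 1$ and hence $\offlineNumber{t}{\onOff(i,)} \leq t - 1 = \leftSize(t)$; or the node $\tilde{l}$ of rank $t - \leftSize(t) - 1$ has $t$ in its right subtree and is therefore an ancestor of $t$. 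Here the hypothesis enters decisively: since $\oneSubtree{\onOff(i,)}$ is a connected subgraph of the tree $B$ containing both $t$ and $\root(B)$, it contains the whole path from $t$ to $\root(B)$ and thus every ancestor of $t$; in particular $\onOff{i}{\tilde{l}} = 1$, which interrupts the offline run and pins $\offlineNumber{t}{\onOff(i,)}$ to $\leftSize(t)$. The identity $\offlineNumberRight{t}{\onOff(i,)} = \rightSize(t)$ follows symmetrically from \cref{result:TopRightRanks}: either $t + \rightSize(t) = T$ caps the count at $T - t = \rightSize(t)$, or the ancestor $\tau$ of rank $t + \rightSize(t) + 1$ lies in $\oneSubtree{\onOff(i,)}$ and blocks the run to the right. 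Thus the only genuinely new ingredient over the validity proof is the observation that connectedness through the root guarantees the presence of these blocking ancestors.
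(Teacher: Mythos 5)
Your proposal is correct and follows essentially the same route as the paper's proof: induction on the number of 1-entries, removing a deepest node (the paper's ``leaf'' of $\oneSubtree{\onOff(i,)}$), and pinning down $\offlineNumber{t}{\onOff(i,)} = \leftSize(t)$ and $\offlineNumberRight{t}{\onOff(i,)} = \rightSize(t)$ via \cref{result:TopLeftRanks,result:TopRightRanks} together with the observation that connectedness through $\root(B)$ places every ancestor of $t$ in the induced subgraph. The only (harmless) cosmetic differences are your base case $n=1$ versus the paper's $n=0$ (handled there by homogeneity of the BTIs) and your explicit remark that monotonicity of $\scsSumDiff{t}$ is not needed, which the paper's proof likewise avoids.
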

\begin{proof}
	Analogously to the proof of \cref{result:SCS_EpigraphInPolyhedron}, we show that $(\onOff(i,),\DiscreteStartupCostSum(\onOff(i,)))$ fulfills the binary tree inequality induced by $B \in \scsBinaryTrees$ by induction over the number of nodes~$n$ in the subgraph $\oneSubtree{\onOff(i,)}$.

	For $n = 0$, the only such vertex is $0 \in \scsVertices$, which fulfills all BTIs with equality. For $n \geq 1$, let $t$ be a leaf of $\oneSubtree{\onOff(i,)}$, and define $\tildeOnOff(i,) := \onOff(i,)-\unitVec{t}$ as in the proof of \cref{result:SCS_EpigraphInPolyhedron}. Note that since $\oneSubtree{\onOff(i,)}$ is a tree and $\root(B) \in \oneSubtree{\onOff(i,)}$, also the subgraph $\oneSubtree{\tildeOnOff(i,)}$ is a tree with $n-1$ nodes, which contains $\root(B)$ in case of $n \geq 2$.

	Since no nodes in the principal subtree of $t$, except $t$ itself, are in $\oneSubtree{\onOff(i,)}$, analogeously to the proof of \cref{result:SCS_EpigraphInPolyhedron}, it holds that $\offlineNumber{t}{\onOff(i,)} \geq \leftSize(t)$ and $\offlineNumberRight{t}{\onOff(i,)} \geq \rightSize(t)$. We show that these bounds are sharp by examining two cases for $\offlineNumber{t}{\onOff(i,)} = \leftSize(t)$ (see \cref{figure:SCS_LiftPrev}) and two cases for $\offlineNumberRight{t}{\onOff(i,)} = \rightSize(t)$ (see \cref{figure:SCS_LiftNext}).
	
	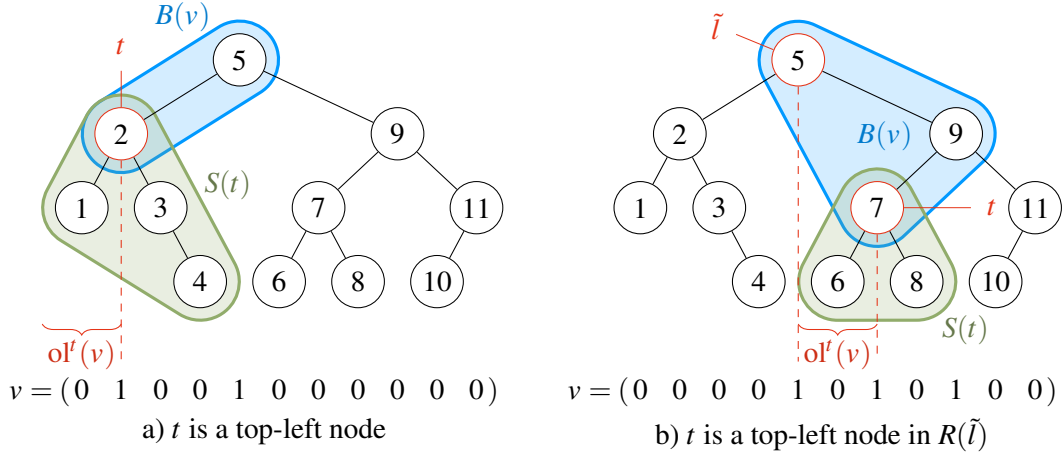
\begin{figure}[htb]
	\begin{tikzpicture}[x=0.53cm,y=-1cm,scale=\textwidthScaling]
		\LiftingTrees
	
		\node[tree] (n1)  at ( 1,2)  {1};
		\node[tree,pin={$t$}, draw=marker] (n2)  at ( 2,1)  {2};
		\node[tree] (n3)  at ( 3,2)  {3};
		\node[tree] (n4)  at ( 4,3)  {4};
		\node[tree] (n5)  at ( 5,0)  {5};
		\node[tree] (n6)  at ( 6,3)  {6};
		\node[tree] (n7)  at ( 7,2)  {7};
		\node[tree] (n8)  at ( 8,3)  {8};
		\node[tree] (n9)  at ( 9,1)  {9};
		\node[tree] (n10) at (10,3) {10};
		\node[tree] (n11) at (11,2) {11};
		
		\draw (n5) edge (n2) edge (n9);
		\draw (n2) edge (n1) edge (n3);
		\draw (n3) edge (n4);
		\draw (n9) edge (n7) edge (n11);
		\draw (n7) edge (n6) edge (n8);
		\draw (n11) edge (n10);
		
		\begin{pgfonlayer}{background}
			\fill[S1] \convexcycle{n5,n2}{\convexRadiusA};
			\fill[S2] \convexcycle{n1,n2,n4}{\convexRadiusA};
			\node[S1text,anchor=south east] at (n5.north west) {$\oneSubtree{\onOff(i,)}$};
			\node[S2text,anchor=west] at ($(n3)+(-20:0.9)$) {$\subtree(t)$};
		\end{pgfonlayer}

		\coordinate (hh) at (n4);
		\draw[draw=marker,decorate,decoration={brace,amplitude=1.5\smm,mirror}] ($(0,0 |- hh)-(0,\convexRadiusA+0.5mm)$) -- ($(2,0 |- hh)-(0,\convexRadiusA+0.5mm)$) node[anchor=north,rectangle,inner sep=0pt,draw=none,fill=none,markerText,midway,yshift=-2\smm] (or) {$\offlineNumber{t}{\onOff(,)}$};

		\foreach \i in {1,3,4,6,7,...,11} {
			\node[vector, anchor=north, yshift=-1\smm] (v\i) at (\i,0 |- or.south) {$0$};
		}
		\foreach \i in {2,5} {
			\node[vector, anchor=north, yshift=-1\smm] (v\i) at (\i,0 |- or.south) {$1$};
		}
		\node[vector,anchor=base east] at (v1.base west) {$\onOff(,) = ($};
		\node[vector,anchor=base west] at (v11.base east) {$)$};
		
		\draw[markerLine] (n2) -- (v2);
		
		\node[description, anchor=north,yshift=-1\smm] at (6,0 |- v1.south) {\llap{a) }$t$ is a top-left node};
	
	\begin{scope}[xshift=7.5cm]
	
		\node[tree] (n1)  at ( 1,2)  {1};
		\node[tree] (n2)  at ( 2,1)  {2};
		\node[tree] (n3)  at ( 3,2)  {3};
		\node[tree] (n4)  at ( 4,3)  {4};
		\node[tree,pin={165:$\tilde{l}$}, draw=marker] (n5)  at ( 5,0)  {5};
		\node[tree] (n6)  at ( 6,3)  {6};
		\node[tree,pin={[pin distance=6.3ex] 0:$t$}, draw=marker] (n7)  at ( 7,2)  {7};
		\node[tree] (n8)  at ( 8,3)  {8};
		\node[tree] (n9)  at ( 9,1)  {9};
		\node[tree] (n10) at (10,3) {10};
		\node[tree] (n11) at (11,2) {11};
		
		\draw (n5) edge (n2) edge (n9);
		\draw (n2) edge (n1) edge (n3);
		\draw (n3) edge (n4);
		\draw (n9) edge (n7) edge (n11);
		\draw (n7) edge (n6) edge (n8);
		\draw (n11) edge (n10);
		
		\begin{pgfonlayer}{background}
			\fill[S1] \convexcycle{n7,n5,n9}{\convexRadiusA};
			\fill[S2] \convexcycle{n6,n7,n8}{\convexRadiusA};
			\node[S1text,anchor=188] at ($(n5)!0.55!(n7)!0.05!(n9)$) {$\oneSubtree{\onOff(i,)}$};
		\end{pgfonlayer}
			\node[S2text,anchor=north west] at ($(n8)+(40:0.5)$) {$\subtree(t)$};
		
		\draw[draw=marker,decorate,decoration={brace,amplitude=1.5\smm,mirror}] ($(5,0 |- n6)-(0,\convexRadiusA+0.5mm)$) -- ($(7,0 |- n6)-(0,\convexRadiusA+0.5mm)$) node[anchor=north,rectangle,inner sep=0pt,draw=none,fill=none,markerText,midway,yshift=-2\smm] (or) {$\offlineNumber{t}{\onOff(,)}$};

		\foreach \i in {1,2,3,4,6,8,10,11} {
			\node[vector, anchor=north, yshift=-1\smm] (v\i) at (\i,0 |- or.south) {$0$};
		}
		\foreach \i in {5,7,9} {
			\node[vector, anchor=north, yshift=-1\smm] (v\i) at (\i,0 |- or.south) {$1$};
		}
		\node[vector,anchor=base east] at (v1.base west) {$\onOff(,) = ($};
		\node[vector,anchor=base west] at (v11.base east) {$)$};
		\draw[markerLine] (n5) -- (v5);
		\draw[markerLine] (n7) -- (v7);
		
		\node[description, anchor=north,yshift=-1\smm] at (6,0 |- v1.south) {\llap{b) }$t$ is a top-left node in $\rightSubtree(\tilde{l})$};
	\end{scope}
	\end{tikzpicture}
	\caption{Number of offline periods~$\protect\offlineNumber{t}{\protect\onOff(i,)}$ in $\protect\onOff(i,)$ before period~$t$, which is either bounded by the start of the model or by the first left ancestor~$\tilde{l}$ of node~$t$}
	\label{figure:SCS_LiftPrev}
\end{figure}
	\begin{figure}[htb]
	\centering
	\begin{tikzpicture}[x=0.53cm,y=-1cm,scale=\textwidthScaling]
		\LiftingTrees
		
		\node[tree] (n1)  at ( 1,2)  {1};
		\node[tree] (n2)  at ( 2,1)  {2};
		\node[tree] (n3)  at ( 3,2)  {3};
		\node[tree] (n4)  at ( 4,3)  {4};
		\node[tree] (n5)  at ( 5,0)  {5};
		\node[tree] (n6)  at ( 6,3)  {6};
		\node[tree] (n7)  at ( 7,2)  {7};
		\node[tree] (n8)  at ( 8,3)  {8};
		\node[tree,pin={[pin distance=2.9ex] 90:$t$}, draw=marker] (n9)  at ( 9,1)  {9};
		\node[tree] (n10) at (10,3) {10};
		\node[tree] (n11) at (11,2) {11};
	
		\draw (n5) edge (n2) edge (n9);
		\draw (n2) edge (n1) edge (n3);
		\draw (n3) edge (n4);
		\draw (n9) edge (n7) edge (n11);
		\draw (n7) edge (n6) edge (n8);
		\draw (n11) edge (n10);
		
		\begin{pgfonlayer}{background}
			\fill[S1] \convexcycle{n5,n9}{\convexRadiusA};
			\path[S2] \convexcycle{n6,n7,n9,n11,n10}{\convexRadiusA};
			\node[S1text,anchor=east] at ($(n5.west)-(0.25,0)$) {$\oneSubtree{\onOff(i,)}$};
			\node[S2text,anchor=south] at ($(n11)+(-60:0.85)$) {$\subtree(t)$};
		\end{pgfonlayer}
		
		\draw[draw=marker,decorate,decoration={brace,amplitude=1.5\smm,mirror}] ($(9,0 |- n10)-(0,\convexRadiusA+0.5mm)$) -- ($(12,0 |- n10)-(0,\convexRadiusA+0.5mm)$) node [anchor=north,rectangle,inner sep=0pt,draw=none,fill=none,markerText,midway,yshift=-2\smm] (or) {$\squeeze{\offlineNumberRight{t}{\onOff(i,)}}$};

		\foreach \i in {1,2,3,4,6,7,8,10,11} {
			\node[vector, anchor=north, yshift=-1\smm] (v\i) at (\i,0 |- or.south) {$0$};
		}
		\foreach \i in {5,9} {
			\node[vector, anchor=north, yshift=-1\smm] (v\i) at (\i,0 |- or.south) {$1$};
		}
		\node[vector,anchor=base east] at (v1.base west) {$\onOff(i,) = ($};
		\node[vector,anchor=base west] at (v11.base east) {$)$};
				
		\draw[markerLine] (n9) -- (v9);
		
		\node[description, anchor=north,yshift=-1\smm] at (6,0 |- v1.south) {\llap{a) }$t$ is a top-right node};
	
	\begin{scope}[xshift=7.5cm]
	
		\node[tree] (n1)  at ( 1,2)  {1};
		\node[tree] (n2)  at ( 2,1)  {2};
		\node[tree] (n3)  at ( 3,2)  {3};
		\node[tree] (n4)  at ( 4,3)  {4};
		\node[tree] (n5)  at ( 5,0)  {5};
		\node[tree] (n6)  at ( 6,3)  {6};
		\node[tree,pin={180:$t$}, draw=marker] (n7)  at ( 7,2)  {7};
		\node[tree] (n8)  at ( 8,3)  {8};
		\node[tree,pin={35:$\tilde{r}$}, draw=marker] (n9)  at ( 9,1)  {9};
		\node[tree] (n10) at (10,3) {10};
		\node[tree] (n11) at (11,2) {11};
		
		\draw (n5) edge (n2) edge (n9);
		\draw (n2) edge (n1) edge (n3);
		\draw (n3) edge (n4);
		\draw (n9) edge (n7) edge (n11);
		\draw (n7) edge (n6) edge (n8);
		\draw (n11) edge (n10);
		
		\begin{pgfonlayer}{background}
			\path[S1] \convexcycle{n5,n9,n7}{\convexRadiusA};
			\path[S2] \convexcycle{n7,n8,n6}{\convexRadiusA};
			\node[S1text] at ($(n5)!0.5!(n7)!0.2!(n9)$) {$\oneSubtree{\onOff(i,)}$};
			\node[S2text,anchor=north east] at ($(n6)+(-220:0.5)$) {$\subtree(t)$};
		\end{pgfonlayer}

		\draw[draw=marker,decorate,decoration={brace,amplitude=1.5\smm,mirror}] ($(7,0 |- n8)-(0,\convexRadiusA+0.5mm)$) -- ($(9,0 |- n8)-(0,\convexRadiusA+0.5mm)$) node [anchor=north,rectangle,inner sep=0pt,draw=none,fill=none,markerText,midway,yshift=-2\smm] (or) {$\squeeze{\offlineNumberRight{t}{\onOff(i,)}}$};

		\foreach \i in {1,2,3,4,6,10,11} {
			\node[vector, anchor=north, yshift=-1\smm] (v\i) at (\i,0 |- or.south) {$0$};
		}
		\foreach \i in {5,7,8,9} {
			\node[vector, anchor=north, yshift=-1\smm] (v\i) at (\i,0 |- or.south) {$1$};
		}
		\node[vector,anchor=base east] at (v1.base west) {$\onOff(i,) = ($};
		\node[vector,anchor=base west] at (v11.base east) {$)$};

		\draw[markerLine] (n9) -- (n9 |- v11.north);
		\draw[markerLine] (n7) -- (v7);
	
		\node[description, anchor=north,yshift=-1\smm] at (6,0 |- v1.south) {\llap{b) }$t$ is a top-right node in $\leftSubtree(\tilde{r})$};
	\end{scope}
	\end{tikzpicture}
	\caption{Number of offline periods~$\protect\offlineNumberRight{t}{\protect\onOff(i,)}$ in vector $\protect\onOff(i,)$ after period~$t$, which is either bounded by the end of the model or by the first right ancestor~$\tilde{r}$ of node~$t$}
	\label{figure:SCS_LiftNext}
\end{figure}
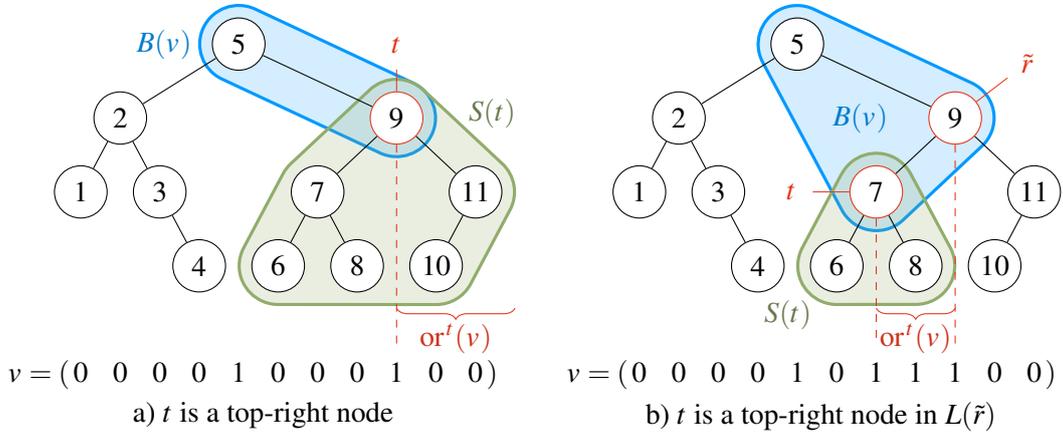

	If $t$ is a top-left node, then $\squeeze{t = \leftSize(t) + 1}$ (see \cref{result:TopLeftRanks}), implying $\squeeze{\leftSize(t) \leq \offlineNumber{t}{\onOff(i,)} \leq t - 1 = \leftSize(t)}$, and thus $\offlineNumber{t}{\onOff(i,)} = \leftSize(t)$. Otherwise, there exists a node~$\tilde{l}$ such that $t$ is a top-left node in the right subtree~$\rightSubtree(\tilde{l})$ of $\tilde{l}$. Since $\tilde{l}$ is an ancestor of $t$, we have $\tilde{l} \in \oneSubtree{\onOff(i,)}$. Hence,
	\begin{equation*}
		\leftSize(t) \leq \offlineNumber{t}{\onOff(i,)} \leq t - \tilde{l} - 1 \withtext{\cref{result:TopLeftRanks}}{=} \left(\tilde{l} + \leftSize(t) + 1 \right) - \tilde{l} - 1 = \leftSize(t).
	\end{equation*}
	
	\vskip-0.5ex
	If $t$ is a top-right node, then $t = \rightSize(t) - 1$, implying $\rightSize(t) \leq \offlineNumberRight{t}{\onOff(i,)} \leq t + 1 = \rightSize(t)$. Else, $t$ is a top-right node in the left subtree of a node $\tilde{r}$. Since $\tilde{r}$ is an ancestor of $t$, we have $\tilde{r} \in \oneSubtree{\onOff(i,)}$, and thus again
	\begin{equation*}
		\rightSize(t) \leq \offlineNumberRight{t}{\onOff(i,)} \leq \tilde{r} - t - 1 \smash{\withtext{\cref{result:TopRightSubtree}}{=}} \rightSize(t).
	\end{equation*}
	
	In conclusion, we have shown $\squeeze[0.75]{\offlineNumber{t}{\onOff(i,)} = \leftSize(t)}$ and $\squeeze[0.75]{\offlineNumberRight{t}{\onOff(i,)} = \rightSize(t)}$. Analogous to \cref{result:SCS_EpigraphInPolyhedron},
	\begin{align*}
		\DiscreteStartupCostSum(\onOff(i,)) &= \DiscreteStartupCostSum(\tildeOnOff(i,)) + \scsSumDiff{t}(\offlineNumber{t}{\onOff(i,)},\offlineNumberRight{t}{\onOff(i,)}) = \DiscreteStartupCostSum(\tildeOnOff(i,)) + \scsSumDiff{t}(\leftSize(t), \rightSize(t))\\
			&\vphantom{\Big(}\smash{\withtextl{ind.hyp.}{=}} \smash[b]{\sum_{\tau \in \Periods}} \scsSumDiff{\tau}(\leftSize(\tau), \rightSize(\tau)) \, \tildeOnOff(i,\tau) + \scsSumDiff{t}(\leftSize(t), \rightSize(t)) = \smash[b]{\sum_{\tau \in \Periods}} \scsSumDiff{\tau}(\leftSize(\tau), \rightSize(\tau)) \, \onOff(i,\tau).\eqqed{0.4em}
	\end{align*}
\end{proof}

In general, a vertex $(\onOff(i,),\DiscreteStartupCostSum(\onOff(i,)))$ may lie on a facet induced by a binary tree $B$ which does not meet the requirements of the preceding lemma, \eg for a linear start-up cost function. The next result shows that this does not happen for strictly concave start-up cost functions~$\StartupCostFunction(,)$.

\begin{lemma}
	\label[lemma]{result:SCS_PointsNotOnFacet}
	Let $\StartupCostFunction(,)$ be strictly concave, $B \in \scsBinaryTrees$, and $(\onOff(i,),\DiscreteStartupCostSum(\onOff(i,))) \in \scsVertices$. If $(\onOff(i,),\DiscreteStartupCostSum(\onOff(i,)))$ fulfills the BTI induced by $B$ with equality, then either $\onOff(i,) = 0$ or the induced subgraph $\oneSubtree{\onOff(i,)}$ is a tree containing $\root(B)$.
\end{lemma}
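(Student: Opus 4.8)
The plan is to mirror the induction of \cref{result:SCS_EpigraphInPolyhedron} and \cref{result:SCS_PointsOnFacet}, but to run it \enquote{in reverse}: I would use the \emph{strict} monotonicity of $\scsSumDiff{t}$ guaranteed by strict concavity (\cref{result:SCS_SumDiffIncreasing}) to convert the tightness of the BTI into structural information about $\oneSubtree{\onOff(i,)}$. The guiding observation is that, since $\oneSubtree{\onOff(i,)}$ is a subgraph of the tree~$B$, it suffices to show that every online node $t \neq \root(B)$ has its parent~$\parent(t)$ online; this \enquote{ancestor\hyph closedness} makes a nonempty $\oneSubtree{\onOff(i,)}$ connected and forces it to contain $\root(B)$, hence to be a tree containing the root.

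Concretely, I would induct on the number~$n$ of nodes of $\oneSubtree{\onOff(i,)}$. For $n=0$ we have $\onOff(i,)=0$, the first alternative. For $n\geq 1$, pick a leaf $t$ of $\oneSubtree{\onOff(i,)}$ and set $\tildeOnOff(i,) := \onOff(i,)-\unitVec{t}$, exactly as in \cref{result:SCS_EpigraphInPolyhedron}. Retracing that proof's chain of inequalities with $\scsSumDiff{t}$ now \emph{strictly} increasing, the assumed equality in the BTI forces both (i) $\offlineNumber{t}{\onOff(i,)} = \leftSize(t)$ and $\offlineNumberRight{t}{\onOff(i,)} = \rightSize(t)$, and (ii) $\tildeOnOff(i,)$ satisfies the BTI induced by $B$ with equality, so that the induction hypothesis applies to $\tildeOnOff(i,)$.

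The heart of the argument is extracting the parent from (i). Since $t$ is a leaf, its left and right subtrees (ranks $t-\leftSize(t),\ldots,t-1$ and $t+1,\ldots,t+\rightSize(t)$) are entirely offline, so $\offlineNumber{t}{\onOff(i,)}=\leftSize(t)$ holds exactly when the node at rank $t-\leftSize(t)-1$ is online or absent, and symmetrically $\offlineNumberRight{t}{\onOff(i,)}=\rightSize(t)$ holds exactly when the node at rank $t+\rightSize(t)+1$ is online or absent. By \eqref{equation:BT_ChildRank}, if $t$ is a right child then $\rank{\parent(t)} = t-\leftSize(t)-1$, while if $t$ is a left child then $\rank{\parent(t)} = t+\rightSize(t)+1$; in either case $\parent(t)$ is precisely the node whose online status is pinned down by~(i). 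For $n=1$ we have $\tildeOnOff(i,)=0$, and the two equalities make $t$ simultaneously a top-left and a top-right node (\cref{result:TopLeftRanks}, \cref{result:TopRightRanks}), whence $t=\root(B)$ and $\oneSubtree{\onOff(i,)}$ consists of the single node $\root(B)$. For $n\geq 2$ the root cannot be a leaf of $\oneSubtree{\onOff(i,)}$, so $t\neq\root(B)$ and the above shows $\parent(t)$ is online; by the induction hypothesis $\oneSubtree{\tildeOnOff(i,)}$ is a tree containing $\root(B)$, and since $t$'s only possible online neighbour in $B$ is $\parent(t)$, adjoining $t$ keeps it a tree containing the root.

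The hard part will be the precise bookkeeping in the third step: one must match the two \enquote{one past the subtree} ranks $t-\leftSize(t)-1$ and $t+\rightSize(t)+1$ with $\parent(t)$ through \eqref{equation:BT_ChildRank} and the characterisations of top-left and top-right nodes, and confirm that exactly \emph{one} of the two forced equalities pins down the parent, depending on whether $t$ is a left or a right child, while the other equality merely re-expresses that $t$ lies on the corresponding top-spine. Everything else is a faithful, strictness\hyph sharpened replay of the monotonicity estimates already established for validity in \cref{result:SCS_EpigraphInPolyhedron}.
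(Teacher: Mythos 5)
Your proposal is correct and follows essentially the same route as the paper's proof: the same induction on the number of nodes of $\oneSubtree{\onOff(i,)}$, the same choice of a leaf~$t$ whose principal subtree in $B$ contains no other online node, the same conversion of tightness plus strict monotonicity of $\scsSumDiff{t}$ into the equalities $\offlineNumber{t}{\onOff(i,)} = \leftSize(t)$, $\offlineNumberRight{t}{\onOff(i,)} = \rightSize(t)$ and tightness for $\tildeOnOff(i,)$, and the same identification of $\parent(t)$ as the rank $t-\leftSize(t)-1$ or $t+\rightSize(t)+1$ via \eqref{equation:BT_ChildRank}. The only cosmetic deviations are that the paper settles the base case $\tildeOnOff(i,) = 0$ by computing $\size(t) = T$ instead of invoking the top-left/top-right characterizations, and deduces $t \neq \root(B)$ from the induction hypothesis rather than from the leaf choice.
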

\begin{proof}
	Again we prove the statement by induction over the number of nodes~$n$ in the subgraph $\oneSubtree{\onOff(i,)}$. The case $n = 0$ is fulfilled trivially.
	
	For $n \geq 1$, analogously to the proof of \cref{result:SCS_EpigraphInPolyhedron}, choose a node~$t$ such that the subtree of~$t$ in $B$ does not contain any other nodes from $\oneSubtree{\onOff(i,)}$. This choice implies $\offlineNumber{t}{\onOff(i,)} \geq \leftSize(t)$ and $\offlineNumberRight{t}{\onOff(i,)} \geq \rightSize(t)$, and thus, because of concavity,
	\begin{equation*}
		\scsSumDiff{t}(\offlineNumber{t}{\onOff(i,)},\offlineNumberRight{t}{\onOff(i,)}) \geq \scsSumDiff{t}(\leftSize(t),\rightSize(t)).
	\end{equation*}
	For $\tildeOnOff(i,) := \onOff(i,) - \unitVec{t}$, \cref{result:SCS_SumUpdate} yields
	\begin{align*}
		\DiscreteStartupCostSum(\onOff(i,)) &- \scsSumDiff{t}(\offlineNumber{t}{\onOff(i,)},\offlineNumberRight{t}{\onOff(i,)})
		= \DiscreteStartupCostSum(\tildeOnOff(i,))
		\intertext{which, since $(\tildeOnOff(i,),\DiscreteStartupCostSum(\tildeOnOff(i,))) \in \scsEpigraph$, may be bounded by}
		&\geq \sum_{\tau \in \Periods} \scsSumDiff{\tau}(\leftSize(\tau),\rightSize(\tau)) \tildeOnOff(i,\tau)
		= \sum_{\tau \in \Periods} \scsSumDiff{\tau}(\leftSize(\tau),\rightSize(\tau)) \onOff(i,\tau) - \scsSumDiff{t}(\leftSize(t),\rightSize(t))
		\intertext{and, by the choice of $\onOff(i,)$ and $t$, equals}
		&= \DiscreteStartupCostSum(\onOff(i,)) - \scsSumDiff{t}(\leftSize(t),\rightSize(t))
		\geq \DiscreteStartupCostSum(\onOff(i,)) - \scsSumDiff{t}(\offlineNumber{t}{\onOff(i,)},\offlineNumberRight{t}{\onOff(i,)}).
	\end{align*}
	Therefore, the above inequality is fulfilled with equality, and we conclude
	\begin{equation*}
		\scsSumDiff{t}(\offlineNumber{t}{\onOff(i,)},\offlineNumberRight{t}{\onOff(i,)}) = \scsSumDiff{t}(\leftSize(t),\rightSize(t)) \quad\text{and}\quad \DiscreteStartupCostSum(\tildeOnOff(i,)) = \sum_{\tau \in \Periods} \scsSumDiff{\tau}(\leftSize(\tau),\rightSize(\tau)) \tildeOnOff(i,\tau).
	\end{equation*}
	
	Firstly, as $\StartupCostFunction(,)$ is assumed to be strictly concave, $\scsSumDiff{t}$ is strictly increasing in $l$ and $r$ (see \cref{result:SCS_SumUpdate}). Recalling that $\offlineNumber{t}{\onOff(i,)} \geq \leftSize(t)$ and $\offlineNumberRight{t}{\onOff(i,)} \geq \rightSize(t)$ by the choice of $t$, we infer $\offlineNumber{t}{\onOff(i,)} = \leftSize(t)$ and $\offlineNumberRight{t}{\onOff(i,)} = \rightSize(t)$.
	
	Secondly, since $\abs{\oneSubtree{\tildeOnOff(i,)}} = \abs{\oneSubtree{\onOff(i,)}} - 1$, the induction hypothesis states that either $\tildeOnOff(i,) = 0$ or $\oneSubtree{\tildeOnOff(i,)}$ is a tree containing $\root(B)$. If $\tildeOnOff(i,) = 0$, then
	\begin{equation*}
		\size(t) = \leftSize(t) + 1 + \rightSize(t) = \offlineNumber{t}{\onOff(i,)} + 1 + \offlineNumberRight{t}{\onOff(i,)} = t - 1 + 1 + (T-t) = T,
	\end{equation*}
	hence $t$ is the root of $B$ and $\oneSubtree{\onOff(i,)}$ is the single-noded tree consisting of $t = \root(B)$.
	
	Else $\tildeOnOff(i,) \neq 0$, implying that $\oneSubtree{\tildeOnOff(i,)}$ is a tree containing $\root(B)$. Therefore $t \neq \root(B)$, meaning that $t$ has a parent~$\parent(t)$. By \eqref{equation:BT_ChildRank}, if $t$ is a right child, then
	\begin{align*}
		\parent(t) &= t - \leftSize(t) - 1 = t - \offlineNumber{t}{\onOff(i,)} - 1,
		\intertext{and if $t$ is a left child, then}
		\parent(t) &= t + \rightSize(t) + 1 = t + \offlineNumberRight{t}{\onOff(i,)} + 1.
	\end{align*}
	In both cases, $\onOff{i}{\parent(t)} = 1$ by definition of $\offlineNumber{t}{\onOff(i,)}$ or $\offlineNumberRight{t}{\onOff(i,)}$ (\eqref{equation:SC_OfflineNumber} and \eqref{equation:SCS_OfflineNumberRight}), and thus $\parent(t) \in \oneSubtree{\onOff(i,)}$. Given that $\oneSubtree{\tildeOnOff(i,)}$ is a tree containing $\root(B)$, we obtain that $\oneSubtree{\onOff(i,)}$ must be a tree containing $\root(B)$ too.\manualqed
\end{proof}

The vectors $\onOff(i,)$ encountered during a lifting process gain a non-zero entry in every step, and are thus linearly independent.

\begin{theorem}
	\label[theorem]{result:SCS_Facets}
	All binary tree inequalities induce facets of $\scsEpigraph$.
\end{theorem}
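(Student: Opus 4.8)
The plan is to reduce the facet claim to a dimension count, since validity is already in hand. Fix a rank-labeled binary tree $B \in \scsBinaryTrees$. By \cref{result:SCS_EpigraphInPolyhedron} the induced BTI is valid for $\scsEpigraph$, so it defines a face $F = \scsEpigraph \cap H$, where $H := \set[\big]{(\onOff(i,),\startupCostSum) \given \startupCostSum = \sum_{t \in \Periods} \scsSumDiff{t}(\leftSize(t),\rightSize(t))\,\onOff(i,t)}$ is the supporting hyperplane. Since $\scsEpigraph$ is full-dimensional in $\reals^{T+1}$ --- its vertices \eqref{equation:SCS_Vertices} project onto all of $\set{0,1}^T$, which affinely spans $\reals^T$, while $\unitVec{T+1}$ is an unbounded direction --- and $F \subseteq H$ forces $\dim F \leq \dim H = T$, it suffices to exhibit $T+1$ affinely independent points of $\scsEpigraph$ lying in $H$. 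This would give $\dim F = T$, i.e.\ $F$ is a facet.

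For these points I would reuse the lifting construction. First, choose a lifting order $\sigma$ that is a linear extension of the partial order encoded by $B$, placing every node before its descendants; such an order exists by topologically sorting the rooted tree. Then I consider the lifting vectors $\liftVector(\idx,)$, $\idx \in \discrange{0}{T}$, from \eqref{equation:SCS_LiftingVectors}, noting that $\liftVector(\idx,)$ is the indicator of $\set{\sigma(1),\ldots,\sigma(\idx)}$. Because $\sigma$ places each node before its descendants, this prefix is ancestor-closed, so for $\idx \geq 1$ the induced subgraph $\oneSubtree{\liftVector(\idx,)}$ is a connected subtree of $B$ containing $\root(B)$. \cref{result:SCS_PointsOnFacet} then guarantees that each vertex $(\liftVector(\idx,),\DiscreteStartupCostSum(\liftVector(\idx,)))$ satisfies the BTI with equality and hence lies in $F$; for $\idx = 0$ the point is the origin, which lies in $F$ since every BTI is homogeneous.

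To finish, I would check affine independence of these $T+1$ points. By \eqref{equation:SCS_LiftingVectors} consecutive lifting vectors differ by a distinct unit vector, $\liftVector(\idx,) - \liftVector(\idx-1,) = \unitVec{\sigma(\idx)}$, so the differences $\liftVector(\idx,) - \liftVector(0,) = \sum_{\idxx=1}^{\idx} \unitVec{\sigma(\idxx)}$ for $\idx \in \discrange{1}{T}$ form a unitriangular, hence invertible, system; thus $\liftVector(0,),\ldots,\liftVector(T,)$ are affinely independent in $\reals^T$. Affine independence of these projections carries over to the lifted points in $\reals^{T+1}$, supplying the required $T+1$ affinely independent points in $F$.

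I expect no serious obstacle: nearly all of the content lives in \cref{result:SCS_PointsOnFacet}, and the only genuinely new observation is that a topological ordering of $B$ yields exactly the prefix sets whose induced subgraphs are rooted subtrees, so that the lemma applies to every intermediate lifting vector. The remaining dimension count is routine once full-dimensionality of $\scsEpigraph$ is noted.
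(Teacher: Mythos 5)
Your proposal is correct and follows essentially the same route as the paper: validity via \cref{result:SCS_EpigraphInPolyhedron}, prefix indicator vectors of a linear extension of the tree order lying on the face via \cref{result:SCS_PointsOnFacet}, and affine independence via a (uni)triangular matrix argument; the paper's ordering by non-decreasing depth is just one particular choice of your topological order. The only addition on your side is making the full-dimensionality of $\scsEpigraph$ explicit, a point the paper leaves implicit.
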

\begin{proof}
	For each binary tree $B \in \scsBinaryTrees$, the induced BTI is valid for $\scsEpigraph$ (see \cref{result:SCS_EpigraphInPolyhedron}).
	
	Choose a permutation~$\sigma$ of $\Periods$ such that the nodes $\sigma(t)$ are ordered by their depth~$\depth{\sigma(t)}$ in $B$, \ie such that for each $t \in \discrange{1}{T-1}$ it holds that $\depth{\sigma(t)} \leq \depth{\sigma(t + 1)}$. For each $\idx \in \Periods$, define the vector
	\begin{equation*}
		\forallPeriods: \qquad \onOff(\idx,t) := \begin{cases}
			1 & \text{if $\sigma^{-1}(t) \leq \idx$,}\\
			0 & \text{else,}
		\end{cases}
	\end{equation*}
	which induces the subgraph $\oneSubtree{\onOff(\idx,)}$ containing the nodes $\set{\sigma(1), \ldots, \sigma(\idx)}$. $\oneSubtree{\onOff(\idx,)}$ fulfills the requirements of \cref{result:SCS_PointsOnFacet}, and thus the vertex~$(\onOff(\idx,), \DiscreteStartupCostSum(\onOff(\idx,)))$ fulfills the BTI with equality.
	
	Denoting the permutation matrix associated to $\sigma$ with $\Pi_\sigma$, we obtain
	\begin{equation*}
		\Pi_\sigma \cdot
		\begin{pmatrix}
			\onOff(1,) & \onOff(2,) & \ldots & \onOff(T,)
		\end{pmatrix} = 
		\begin{pmatrix}
			1 & \cdots & \cdots & 1\\
			0 & 1 & \cdots & 1\\
			\vdots & \ddots & \ddots & \vdots\\
			0 & \cdots & 0 & 1
		\end{pmatrix},
	\end{equation*}
	and thus $0, (\onOff(1,), \DiscreteStartupCostSum(\onOff(1,)))$, $\ldots$, $(\onOff(T,), \DiscreteStartupCostSum(\onOff(T,)))$ are affine linearly independent. Since they amount to $\squeeze{T+1}$ points on the face~$F$ of $\scsEpigraph$ induced by the BTI, $F$ must be a facet.\hbox{\manualqed}
\end{proof}

\section{Sufficiency of the BTIs}
\label{section:SCS_HRepresentation}

In this subsection, we prove that the binary tree inequalities (\cref{definition:SCS_BinaryTreeInequality}), together with the trivial facets $0 \leq \onOff(i,t) \leq 1$, are sufficient for an \hrepresentation of $\scsEpigraph$. To do so we show that the facets induced by all the BTIs fully describe the lower boundary of $\scsEpigraph$. To this end, we extend the characterization of the facets on which a vertex $(\onOff(i,),\DiscreteStartupCostSum(\onOff(i,)))$ lies (\cref{result:SCS_PointsOnFacet}) to points with $\onOff(i,) \in [0,1]^T$.

\Cref{result:SCS_PointsOnFacet} provides the sufficient (but not necessary) condition for the vertices of $\scsEpigraph$ lying on a certain facet: \enquote{A vertex $(\onOff(i,),\DiscreteStartupCostSum(\onOff(i,)))$ lies on the facet corresponding to a binary tree $B \in \scsBinaryTrees$, if the induced subgraph $\oneSubtree{\onOff(i,)}$ is a tree containing $\root(B)$.} In other words, each node~$t \in \oneSubtree{\onOff(i,)}$ needs to be connected to $\root(B)$ within $\oneSubtree{\onOff(i,)}$. The unique path $P_t$ from $t$ to $\root(B)$ consists of its ancestors,
\begin{equation*}
	P_t = t \to \parent(t) \to \parentBraces{\parent(t)} \to \parentBraces{\parentBraces{\parent(t)}} \to \ldots \to \root(B).
\end{equation*}
By definition, $t \in \oneSubtree{\onOff(i,)} \equals \onOff(i,t) = 1$. So, $\oneSubtree{\onOff(i,)}$ contains the paths $P_t$ for all $t \in \oneSubtree{\onOff(i,)}$ iff
\begin{equation*}
	\onOff(i,t) = 1 \implies \onOffBraces{i}{\parent(t)} = 1 \quad \text{for all $t \in \Periods \setminus \set{\root(B)}$},
\end{equation*}
which, since $\onOff(i,) \in \set{0,1}^T$, is equivalent to
\begin{equation}
	\label{equation:SCS_CartesianCondition}
	\onOff(i,t) \leq \onOff{i}{\parent(t)} \quad \text{for all $t \in \Periods \setminus \set{\root(B)}$}.
\end{equation}

Coincidentally, this condition is also important when searching efficiently in a point set in the Cartesian plane, and is denoted by \enquote{$B$ is a Cartesian tree for $\onOff(i,)$} in \cite{vuillemin_unifying_1980}. We use a definition adapted to our purposes, equivalent to the recursive definition of Cartesian trees in \cite{gabow_scaling_1984}.
\begin{definition}
	\label[definition]{definition:CartesianTree}
	For each $\onOff(i,) \in \reals^T$, a binary tree~$B \in \scsBinaryTrees$ with
	\begin{equation*}
		\onOff(i,t) \leq \onOff{i}{\parent(t)} \quad \text{for all $t \in \Periods \setminus \set{\root(B)}$}
	\end{equation*}
	is called a \emph{Cartesian tree} for $\onOff(i,)$.
\end{definition}

The following lemma shows that condition~(\ref{equation:SCS_CartesianCondition}) applies to any $\onOff(i,) \in [0,1]^T$ as well.
\begin{lemma}
	\label[lemma]{result:SCS_FracPointsOnFacet}
	If $\onOff(i,) \in [0,1]^T$ and $B \in \scsBinaryTrees$ is a Cartesian tree for $\onOff(i,)$, then $(\onOff(i,),\StartupCostSum(\onOff(i,)))$ lies on the facet of $\scsEpigraph$ induced by $B$.
\end{lemma}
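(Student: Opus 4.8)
The plan is to show that $(\onOff(i,),\StartupCostSum(\onOff(i,)))$ satisfies the BTI of $B$ with equality. Since this point lies in $\scsEpigraph$ by definition of $\StartupCostSum$, and the BTI induces a facet (\cref{result:SCS_Facets}), tightness is exactly what places the point on that facet. By \cref{result:SCS_EpigraphInPolyhedron} the BTI is valid, so $\StartupCostSum(\onOff(i,)) \geq \sum_{t \in \Periods} \scsSumDiff{t}(\leftSize(t),\rightSize(t))\,\onOff(i,t)$ holds automatically; the entire task is the reverse inequality. I would obtain it by reducing the fractional claim to the integral case already settled in \cref{result:SCS_PointsOnFacet}, namely by writing $\onOff(i,)$ as a convex combination of $0/1$ vectors that are themselves Cartesian trees for $B$ and hence lie on the facet.

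The decomposition I have in mind is the layer-cake construction. Let $0 = \theta_0 < \theta_1 < \dots < \theta_m \leq 1$ be the distinct values occurring among the coordinates of $\onOff(i,)$ together with $0$, and for each $k$ let $w^k \in \set{0,1}^T$ be the indicator of the super-level set $\set{t \in \Periods \given \onOff(i,t) \geq \theta_k}$. The key point is that each such super-level set is closed under taking parents: if $\onOff(i,t) \geq \theta_k$, then the Cartesian condition (\cref{definition:CartesianTree}) $\onOff(i,t) \leq \onOff{i}{\parent(t)}$ forces $\onOff{i}{\parent(t)} \geq \theta_k$. Hence the support of each $w^k$ is ancestor-closed, so $\oneSubtree{w^k}$ is a subtree containing $\root(B)$, and $w^k$ meets the hypothesis of \cref{result:SCS_PointsOnFacet}. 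Setting $\lambda_k := \theta_k - \theta_{k-1}$ yields $\onOff(i,) = (1-\theta_m)\cdot 0 + \sum_{k=1}^m \lambda_k\, w^k$, which is a genuine convex combination since the coefficients are non-negative and sum to~$1$.

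To finish I would combine three facts. By \cref{result:SCS_PointsOnFacet} every $w^k$, as well as the zero vector (which satisfies every homogeneous BTI with equality), lies on the facet, so $\DiscreteStartupCostSum(w^k) = \sum_{t \in \Periods} \scsSumDiff{t}(\leftSize(t),\rightSize(t))\, w^k_t$. Since $\epi(\StartupCostSum) = \scsEpigraph$ is convex, $\StartupCostSum$ is a convex function that agrees with $\DiscreteStartupCostSum$ on $\set{0,1}^T$; applying convexity to the above combination gives $\StartupCostSum(\onOff(i,)) \leq \sum_{k} \lambda_k\, \DiscreteStartupCostSum(w^k)$. Substituting the tightness identity and interchanging the two finite sums rewrites the right-hand side as $\sum_{t \in \Periods} \scsSumDiff{t}(\leftSize(t),\rightSize(t))\,\onOff(i,t)$, which is precisely the desired reverse inequality. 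Together with validity this forces equality, placing $(\onOff(i,),\StartupCostSum(\onOff(i,)))$ on the facet.

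The main obstacle is the decomposition step: one must verify that the Cartesian condition survives thresholding, i.e.\ that every super-level set is ancestor-closed, and that the resulting indicators are exactly the integral Cartesian trees covered by \cref{result:SCS_PointsOnFacet}. This is in essence the statement that the polytope cut out by $0 \leq \onOff(i,t) \leq 1$ and $\onOff(i,t) \leq \onOff{i}{\parent(t)}$ is an order polytope, hence integral, with vertices the ancestor-closed $0/1$ vectors; the layer-cake identity furnishes the explicit integrality certificate. The remaining manipulations—invoking convexity of $\StartupCostSum$ and swapping the order of summation—are routine.
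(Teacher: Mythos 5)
Your proposal is correct and follows essentially the same route as the paper's proof: there, too, $\onOff(i,)$ is written as a convex combination of $0/1$ indicator vectors with ancestor-closed support (the paper sorts the coordinates by decreasing value, breaking ties by increasing depth, and takes prefix indicators, whose nonzero-weight members are exactly your super-level sets), \cref{result:SCS_PointsOnFacet} is applied to each such vector, and the conclusion follows by convexity. The only cosmetic differences are that your distinct-value thresholding makes the depth tie-breaking unnecessary, and that you close the argument via Jensen's inequality plus validity of the BTI, whereas the paper observes directly that a convex combination of points on the facet again lies on the facet.
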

\begin{proof}
For each $\onOff(i,) \in [0,1]^T$ and each Cartesian tree $B \in \scsBinaryTrees$ for $\onOff(i,)$, we decompose the vector~$\onOff(i,)$ into binary vectors $\tildeOnOff(\idx,)$, which, paired with the summed start-up costs~$\DiscreteStartupCostSum(\tildeOnOff(\idx,))$, lie on the facet of $\scsEpigraph$ induced by $B$. This decomposition is depicted schematically in \cref{figure:SCS_Decomposition}.

\begin{figure}[!h]
\centering
\begin{tikzpicture}[x=1mm,y=1mm,scale=\textwidthScaling]
	\decompositionDefinitions

	\decompositionAxis{\bigw}{\bigh}

	\begin{scope}[xscale=\bigw, yscale=\bigh]
		\node (original) at (2.5,1.2) {fractional vector $\onOff(i,) \in [0,1]^5$};
		\foreach \t in {1,2,3,4,5} {
			\foreach \i in {5,4,3,2,1}{
				\pgfmathparse{int(5-\i+1)}
				\path[part\pgfmathresult] ({\t-1},{\onOffV[\sigmaV[\i+1]]}) rectangle (\t,\onOffV[\sigmaV[\i]]);
				\pgfmathparse{\sigmaV[\i]}
				\ifnum\t=\pgfmathresult
					\breakforeach
				\fi
			}
			\node[anchor=north] at ({\t-0.5},0) {$\pgfmathparse{\sigmaInfV[\t]}\sigma(\pgfmathprintnumber[int trunc]{\pgfmathresult})$};
		}
	\end{scope}

	\newcommand{\smallPic}[4]{
		\begin{scope}[yshift={-47mm+#1*14.5mm}]
			\decompositionAxis*{\smallw}{\smallh}
			\begin{scope}[xscale=\smallw, yscale=\smallh]
				\ifthenelse{\not\isempty{#3}}{
					\node[anchor=east] (p#1) at (-0.2,0.5) {$\onOff{i}{#2}-\onOff{i}{#3} \ \cdot$};
				}{
					\node[anchor=east] (p#1) at (-0.2,0.5) {$\onOff{i}{#2} \ \cdot$};
				}
				\foreach \k in {#4}{
					\path[part#1] (\k-1,0) rectangle (\k,1);
					\node at (\k-0.5,0.5) {1};
				}
			\end{scope}
		\end{scope}
	}
	
	\begin{scope}[xshift=\bigw*5mm+49mm, yshift=\bigh*0.5mm]
		\node (vectorsText) at (2.5*\smallw,43) {vectors};
		\draw [decorate,decoration={brace,amplitude=2mm}]
($(0,0 |- vectorsText.south)-(0,3)$) -- ($(5*\smallw,0 |- vectorsText.south)-(0,3)$);
		\node[anchor=east] (factorsText) at (-0.3*\smallw,0 |- vectorsText) {factors};
		\draw [decorate,decoration={brace,amplitude=2mm,aspect=0.65}]
($(-3*\smallw,0 |- vectorsText.south)-(0,3)$) -- ($(-0.7*\smallw,0 |- vectorsText.south)-(0,3)$);
		\node[anchor=base] at ($(vectorsText.base west)!0.5!(factorsText.base east)$) {and};
		\node[anchor=base east] (convexCombinationText) at (factorsText.base west) {convex combination:};
		
		\smallPic{5}{5}{1}{5}
		\smallPic{4}{1}{3}{1,5}
		\smallPic{3}{3}{4}{1,3,5}
		\smallPic{2}{4}{2}{1,3,4,5}
		\smallPic{1}{2}{}{1,2,3,4,5}
	\end{scope}
	
	\begin{scope}[xscale=\bigw, yscale=\bigh]
		\tikzstyle{conn}=[-latex, shorten >=3, shorten <=5, looseness=1.8];
		\draw[conn] (5,{\onOffV[\sigmaInfV[1]]/2}) -- ++(0.4,0) to[out=0, in=180] (p1);
		\foreach \i in {2,3,4,5} {
			\draw[conn, out=0, in=180] (5,{\onOffV[\sigmaV[5-\i+1]]/2+\onOffV[\sigmaV[5-\i+2]]/2}) to (p\i);
		}
	\end{scope}
\end{tikzpicture}
	\caption[Decomposition of a fractional vector into vectors known to lie on a single facet (with appropriate summed start-up costs).]{Decomposition of a fractional vector $\onOff(i,) \in [0,1]^5$ into binary vectors $\onOff(j,) \in \set{0,1}^5$, such that all points $(\onOff(j,),\StartupCostSum(\onOff(j,)))$ lie on the same facet.}
	\label{figure:SCS_Decomposition}
\end{figure}
	
Choose a permutation $\sigma$ which orders the indices $t \in \Periods$ by decreasing value of~$\onOff(i,t)$, and increasing depth $\depth(t)$ in $B$ if the values $\onOff(i,t)$ are equal, \ie such that
	\begin{align*}
		\forall\,\idx_1, \idx_2 \in \Periods, \idx_1 < \idx_2: \qquad \onOff{i}{\sigma(\idx_1)} &\geq \onOff{i}{\sigma(\idx_2)} \quad\text{and}\\
		\onOff{i}{\sigma(\idx_1)} &= \onOff{i}{\sigma(\idx_2)} \quad \implies \quad \depth(\sigma(\idx_1)) \leq \depth(\sigma(\idx_2)).
	\end{align*}

	\noindent For all $\idx \in \Periods$, define the vectors $\tildeOnOff(\idx,) \in \set{0,1}^T$ and the coefficients $\lambda_\idx$ as
	\begin{equation*}
		\tildeOnOff(\idx,t) := \begin{cases}
			1 & \text{if } \sigma^{-1}(t) \leq \idx,\\
			0 & \text{else,}
		\end{cases}
		\qquad\text{and}\qquad
		\lambda_\idx := \begin{cases}
			\onOff{i}{\sigma(\idx)} - \onOff{i}{\sigma(\idx+1)} & \text{for } \idx \leq T-1,\\
			\onOff{i}{\sigma(T)} & \text{else.}
		\end{cases}
	\end{equation*}
	By definition of $\sigma$, all $\lambda_\idx$ are non-negative, and obviously they sum up to $\onOff{i}{\sigma(1)}$. Hence, for each $t \in \Periods$, it holds that
\begin{equation*}
	\sum_{\idx = 1}^T \lambda_\idx \tildeOnOff(\idx,t) =
	\sum_{\idx = \sigma^{-1}(t)}^T \lambda_\idx =
	\sum_{\idx = \sigma^{-1}(t)}^{T-1} (\onOff{i}{\sigma(\idx)} - \onOff{i}{\sigma(\idx+1)}) + \onOff{i}{\sigma(T)} =
	\onOff{i}{\sigma(\sigma^{-1}(t))} = \onOff(i,t).
\end{equation*}

\noindent Therefore, the vector~$\onOff(i,)$ indeed is a convex combination of the vectors $\tildeOnOff(\idx,)$ and $0$,
	\begin{equation*}
		\onOff(i,) = \sum_{\idx = 1}^T \lambda_\idx \tildeOnOff(\idx,) + (1 - \onOff{i}{\sigma(1)}) \:0.
	\end{equation*}

	We proceed by showing that the vertices $(\tildeOnOff(\idx,), \DiscreteStartupCostSum(\tildeOnOff(\idx,)))$ lie on the facet induced by $B$. For each $\idx \in \Periods$, the induced subgraph $\oneSubtree{\tildeOnOff(\idx,)}$ is the induced subgraph of $B$ on nodes $\set*{t \in \Periods \given \sigma^{-1}(t) \leq \idx}$. Consider an arbitrary node~$t \in \oneSubtree{\tildeOnOff(\idx,)}$ and its parent $\parent(t)$. Due to the definition of $B$, both
	\begin{equation*}
		\onOff(i,t) \leq \onOff{i}{\parent(t)} \quad\text{and}\quad \depth(t) > \depth(\parent(t)) \quad \text{hold.}
	\end{equation*}
	So, by choice of the permutation~$\sigma$, we have
	\begin{equation*}
		\sigma^{-1}(\parent(t)) < \sigma^{-1}(t) \leq \idx,
	\end{equation*}
	and hence $\parent(t) \in \oneSubtree{\tildeOnOff(\idx,)}$ too. As noted in the motivation preceding this lemma, this condition is equivalent to \enquote{$\oneSubtree{\tildeOnOff(\idx,)}$ contains the root of $B$ and is a tree}, and thus \cref{result:SCS_PointsOnFacet} implies that $(\tildeOnOff(\idx,), \DiscreteStartupCostSum(\tildeOnOff(\idx,)))$ lies on the facet induced by $B$.

	Being a convex combination of such points, the point $(\onOff(i,), \sum_{\idx = 1}^T \lambda_\idx \DiscreteStartupCostSum(\tildeOnOff(\idx,)))$ lies on the same facet, implying $\StartupCostSum(\onOff(i,)) = \sum_{\idx = 1}^T \lambda_\idx \DiscreteStartupCostSum(\tildeOnOff(\idx,))$ and therefore that $(\onOff(i,),\StartupCostSum(\onOff(i,)))$ lies on the facet of $\scsEpigraph$ induced by $B$.\manualqed
\end{proof}

Note that \cref{result:SCS_FracPointsOnFacet} is a generalization of \cref{result:SCS_PointsOnFacet}, since for discrete vectors $\squeeze{\onOff(i,) \in \set{0,1}^T}$ the induced subgraph $\oneSubtree{\onOff(i,)}$ is a Cartesian tree for~$\onOff(i,)$ iff it is empty or a tree containing $\root(B)$.

By the last result, each BTI defines a part of the lower boundary of $\scsEpigraph$, and thereby also a part of $\StartupCostSum$.
\begin{corollary}
	\label[corollary]{result:SCS_ExplicitFunction}
	If $B \in \scsBinaryTrees$ is a Cartesian tree for $\onOff(i,) \in [0,1]^T$, then
	\begin{equation*}
		\StartupCostSum(\onOff(i,)) = \sum_{t \in \Periods} \scsSumDiff{t}(\leftSize(t),\rightSize(t)) \, \onOff(i,t).
	\end{equation*}
\end{corollary}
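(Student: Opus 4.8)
The plan is to read this identity off directly from the facet characterization established in \cref{result:SCS_FracPointsOnFacet}, so the corollary is essentially a reformulation of that lemma. Recall from \cref{definition:SCS_BinaryTreeInequality} that the BTI induced by $B$ is the inequality $\startupCostSum \geq \sum_{t \in \Periods} \scsSumDiff{t}(\leftSize(t),\rightSize(t)) \, \onOff(i,t)$, which by \cref{result:SCS_Facets} is facet-inducing for $\scsEpigraph$. The facet it induces is exactly the intersection of $\scsEpigraph$ with the supporting hyperplane on which this inequality holds with equality.

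Given that $B$ is a Cartesian tree for $\onOff(i,) \in [0,1]^T$, \cref{result:SCS_FracPointsOnFacet} guarantees that the point $(\onOff(i,), \StartupCostSum(\onOff(i,)))$ lies on precisely this facet. First I would use that the $\startupCostSum$-coordinate of the point furnished by that lemma is indeed $\StartupCostSum(\onOff(i,))$, which holds by the very definition of $\StartupCostSum$ as the unique function whose epigraph is $\scsEpigraph$ (see \eqref{equation:SCS_StartupCostSum}). Then, since the point lies on the facet, it satisfies the defining hyperplane equation with equality; reading off its last coordinate yields $\StartupCostSum(\onOff(i,)) = \sum_{t \in \Periods} \scsSumDiff{t}(\leftSize(t),\rightSize(t)) \, \onOff(i,t)$, which is the claim.

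There is no genuine obstacle to overcome here, as all the substantive work has already been carried out in the proof of \cref{result:SCS_FracPointsOnFacet} (and, for the discrete case, in \cref{result:SCS_PointsOnFacet}). The single point worth making explicit is the translation between the geometric statement \enquote{the point lies on the facet} and the analytic statement \enquote{equality holds in the BTI}: once one observes that the facet is cut out by the equality version of the BTI and that the vertical coordinate of the relevant point equals $\StartupCostSum(\onOff(i,))$, the closed form for $\StartupCostSum$ on the region of Cartesian trees for $B$ follows verbatim.
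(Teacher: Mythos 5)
Your proposal is correct and matches the paper's reasoning exactly: the paper treats this corollary as an immediate consequence of \cref{result:SCS_FracPointsOnFacet} (it gives no separate proof), since lying on the facet induced by $B$ means precisely that the BTI holds with equality at $(\onOff(i,),\StartupCostSum(\onOff(i,)))$. Your explicit unpacking of ``lies on the facet'' into the equality statement is exactly the intended, and only, step.
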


By definition, a Cartesian tree for some~$\onOff(i,)$ is also a Cartesian tree for~$\lambda\onOff(i,)$ with $\lambda \geq 0$, which means \cref{result:SCS_ExplicitFunction} states that $\StartupCostSum$ is homogeneous.
\begin{corollary}
	\label[corollary]{result:SCS_Homogeneous}
	$\StartupCostSum$ is homogeneous, \ie
	\begin{equation*}
		\forall\, \lambda \in [0,1], \onOff(i,) \in [0,1]^T: \qquad \lambda\StartupCostSum(\onOff(i,)) = \StartupCostSum(\lambda \onOff(i,)).
	\end{equation*}
\end{corollary}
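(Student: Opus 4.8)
The plan is to obtain the homogeneity directly from the explicit description of $\StartupCostSum$ in \cref{result:SCS_ExplicitFunction}, exploiting that the property of being a Cartesian tree is invariant under non-negative scaling. Indeed, as noted in the remark preceding the statement, this is essentially all that is needed.

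First I would fix an arbitrary $\onOff(i,) \in [0,1]^T$ and $\lambda \in [0,1]$, and observe that $\lambda\onOff(i,) \in [0,1]^T$ as well, so that $\StartupCostSum$ is defined at both vectors. Next I would select a Cartesian tree $B \in \scsBinaryTrees$ for $\onOff(i,)$; such a tree always exists, as it can be constructed recursively by placing an index of maximal value at the root and recursing on the indices to its left and right (\cf the definition in \cite{gabow_scaling_1984}), which automatically yields a rank-labeled binary tree on $\Periods$.

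The key step is to verify that this same $B$ is also a Cartesian tree for $\lambda\onOff(i,)$. By \cref{definition:CartesianTree}, $B$ being Cartesian for $\onOff(i,)$ means $\onOff(i,t) \leq \onOff{i}{\parent(t)}$ for all $t \in \Periods \setminus \set{\root(B)}$. Multiplying this inequality by the non-negative factor $\lambda$ preserves it, so $\lambda\onOff(i,t) \leq \lambda\onOff{i}{\parent(t)}$ holds for all such $t$, and hence $B$ satisfies the defining condition for $\lambda\onOff(i,)$ too. This is the only thing that genuinely has to be checked, and it is immediate; there is no real obstacle in the argument.

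Finally, I would apply \cref{result:SCS_ExplicitFunction} with this single tree $B$ to both vectors and factor $\lambda$ out of the sum, noting that the coefficients $\scsSumDiff{t}(\leftSize(t),\rightSize(t))$ depend only on $B$ and not on the particular vector:
\begin{equation*}
	\StartupCostSum(\lambda\onOff(i,)) = \sum_{t \in \Periods} \scsSumDiff{t}(\leftSize(t),\rightSize(t)) \, \lambda\onOff(i,t) = \lambda \sum_{t \in \Periods} \scsSumDiff{t}(\leftSize(t),\rightSize(t)) \, \onOff(i,t) = \lambda\StartupCostSum(\onOff(i,)).
\end{equation*}
This establishes $\lambda\StartupCostSum(\onOff(i,)) = \StartupCostSum(\lambda\onOff(i,))$ for all $\lambda \in [0,1]$ and $\onOff(i,) \in [0,1]^T$, as claimed.
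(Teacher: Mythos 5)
Your proposal is correct and is essentially the paper's own argument: the paper proves this corollary by exactly the observation that a Cartesian tree for $\onOff(i,)$ is, by definition, also a Cartesian tree for $\lambda\onOff(i,)$ with $\lambda \geq 0$, and then applies \cref{result:SCS_ExplicitFunction} with that single tree to both vectors so that the coefficients $\scsSumDiff{t}(\leftSize(t),\rightSize(t))$ are unchanged and $\lambda$ factors out of the sum. Your additional remarks (existence of the Cartesian tree via \cite{gabow_scaling_1984}, and that the coefficients depend only on $B$) just make explicit what the paper leaves implicit.
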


Since epigraphs are characterized by their lower boundary, this is equivalent to:
\begin{corollary}
	\label[corollary]{result:SCS_Separation}
	If $(\onOff(i,),\startupCostSum) \in [0,1]^T \times \reals$ and $B \in \scsBinaryTrees$ is a Cartesian tree for $\onOff(i,)$, then $(\onOff(i,), \startupCostSum)$ lies in $\scsEpigraph$ iff it fulfills the BTI induced by $B$.
\end{corollary}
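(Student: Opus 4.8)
The plan is to reduce the claim directly to the explicit evaluation of $\StartupCostSum$ established in \cref{result:SCS_ExplicitFunction}. The starting point is the defining identity $\scsEpigraph = \epi(\StartupCostSum)$ (see \eqref{equation:SCS_StartupCostSum}): by construction $\StartupCostSum$ is the unique function whose epigraph is $\scsEpigraph$, so membership reduces to a one-dimensional condition on the last coordinate. That is, $(\onOff(i,),\startupCostSum) \in \scsEpigraph$ holds if and only if $\startupCostSum \geq \StartupCostSum(\onOff(i,))$.

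Next I would exploit the hypothesis that $B$ is a Cartesian tree for $\onOff(i,)$. Under exactly this hypothesis, \cref{result:SCS_ExplicitFunction} gives the closed-form evaluation
\begin{equation*}
	\StartupCostSum(\onOff(i,)) = \sum_{t \in \Periods} \scsSumDiff{t}(\leftSize(t),\rightSize(t)) \, \onOff(i,t).
\end{equation*}
The right-hand side is precisely the right-hand side of the BTI induced by $B$ (\cref{definition:SCS_BinaryTreeInequality}). Hence, for this particular point, the BTI induced by $B$ is literally the inequality $\startupCostSum \geq \StartupCostSum(\onOff(i,))$.

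Combining these two observations closes both directions of the equivalence simultaneously: fulfilling the BTI is the same as $\startupCostSum \geq \StartupCostSum(\onOff(i,))$, which by the first step is exactly the condition for lying in $\scsEpigraph$. There is essentially no obstacle remaining, since all the analytic work — the concavity-driven monotonicity of $\scsSumDiff{t}$, the facet characterization of \cref{result:SCS_FracPointsOnFacet}, and the resulting explicit formula of \cref{result:SCS_ExplicitFunction} — has already been carried out. The one point worth emphasizing is that the equivalence hinges on $B$ being Cartesian for the given $\onOff(i,)$: for a non-Cartesian tree the induced BTI remains valid by \cref{result:SCS_EpigraphInPolyhedron} but is in general strict, and therefore could not by itself characterize membership in $\scsEpigraph$. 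This is exactly what makes the corollary the correctness certificate for the separation routine — it suffices to exhibit a single Cartesian tree to decide membership.
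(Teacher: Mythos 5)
Your proof is correct and is essentially identical to the paper's own argument: both reduce membership in $\scsEpigraph$ to the condition $\startupCostSum \geq \StartupCostSum(\onOff(i,))$ via \eqref{equation:SCS_StartupCostSum}, and then invoke \cref{result:SCS_ExplicitFunction} under the Cartesian-tree hypothesis to identify $\StartupCostSum(\onOff(i,))$ with the right-hand side of the BTI induced by $B$. The closing remark about why the Cartesian hypothesis is indispensable is accurate but goes beyond what the paper's two-line proof records.
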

\begin{proof}
\mbox{}\vskip-2.3\baselineskip
\begin{align*}
	(\onOff(i,),\startupCostSum) \in \scsEpigraph &\equals \startupCostSum \geq \StartupCostSum(\onOff(i,)) = \sum_{t \in \Periods} \scsSumDiff{t}(\leftSize(t),\rightSize(t)) \onOff(i,t)\\
	&\equals (\onOff(i,),\startupCostSum) \text{ fulfills the BTI induced by $B$}.\eqqed{3.9em}
\end{align*}
\end{proof}

Finally, since there exists a rank-labeled Cartesian tree for each $\onOff(i,) \in [0,1]^T$ (\cite{gabow_scaling_1984}), the BTIs and the trivial inequalities $0 \leq \onOff(i,t) \leq 1$ completely describe $\scsEpigraph$.
\begin{theorem}
	\label[theorem]{result:SCS_HRepresentation}
	\begin{equation}
		\label{equation:SCS_HRepresentation}
		\scsEpigraph = \set*{
			\begin{gathered}
				(\onOff(i,),\startupCostSum) \in \reals^{T+1} \subjectTo \hspace*{\fill}\\
				\qquad\begin{alignedat}{2}
					\startupCostSum &\geq \sum_{t \in \Periods} \scsSumDiff{t}( \leftSize(t), \rightSize(t)) \, \onOff(i,t), &\qquad& B \in \scsBinaryTrees\\
				0 & \leq \onOff(i,t) \leq 1, && t \in \Periods
				\end{alignedat}\
			\end{gathered}}.
	\end{equation}
\end{theorem}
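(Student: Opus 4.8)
The plan is to prove the set identity in~\eqref{equation:SCS_HRepresentation} by establishing the two inclusions between $\scsEpigraph$ and the polyhedron $Q$ described by its right-hand side. All of the analytic work has already been done in the preceding results, so the proof amounts to assembling them: the validity of the binary tree inequalities (\cref{result:SCS_EpigraphInPolyhedron}), the separation characterization (\cref{result:SCS_Separation}), and the existence of a rank-labeled Cartesian tree for every point of $[0,1]^T$ (\cite{gabow_scaling_1984}).

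First I would treat the inclusion $\scsEpigraph \subseteq Q$, which is the routine direction. Every point $(\onOff(i,),\startupCostSum) \in \scsEpigraph$ has $\onOff(i,) \in [0,1]^T$ by construction of the epigraph (see~\eqref{equation:SCS_StartupCostSum}), so it satisfies the trivial bounds $0 \leq \onOff(i,t) \leq 1$; and it satisfies every binary tree inequality by \cref{result:SCS_EpigraphInPolyhedron}, which establishes each BTI as valid for $\scsEpigraph$. Hence every point of $\scsEpigraph$ lies in $Q$.

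The reverse inclusion $Q \subseteq \scsEpigraph$ is where the structural results are needed. Given an arbitrary $(\onOff(i,),\startupCostSum) \in Q$, the trivial inequalities force $\onOff(i,) \in [0,1]^T$, so~\cite{gabow_scaling_1984} supplies a rank-labeled Cartesian tree $B \in \scsBinaryTrees$ for $\onOff(i,)$. Because the point lies in $Q$ it fulfils \emph{all} binary tree inequalities, in particular the one induced by this specific $B$. \Cref{result:SCS_Separation}, applied with this Cartesian tree, then gives $(\onOff(i,),\startupCostSum) \in \scsEpigraph$, completing the inclusion and the proof.

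I do not anticipate a real obstacle, precisely because the difficult content lives in the earlier lemmata. The one idea that makes the argument work is the reduction carried by \cref{result:SCS_Separation}: although $Q$ is cut out by exponentially many inequalities indexed over all of $\scsBinaryTrees$, deciding membership of a fixed point only requires checking the single inequality belonging to a Cartesian tree for that point. The existence guarantee for such a tree is therefore exactly the ingredient that upgrades the pointwise separation statement into a complete \hrepresentation.
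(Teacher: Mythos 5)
Your proposal is correct and follows exactly the paper's own (implicit) argument: the paper proves the theorem via the sentence preceding it, combining the existence of a rank-labeled Cartesian tree for every $\onOff(i,) \in [0,1]^T$ (\cite{gabow_scaling_1984}) with \cref{result:SCS_Separation}, while the easy inclusion rests on the validity of the BTIs from \cref{result:SCS_EpigraphInPolyhedron}. Your two-inclusion write-up just makes explicit what the paper compresses into one sentence.
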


It remains to discuss whether this \hrepresentation is irredundant. As noted in the paragraph before \cref{result:SCS_PointsNotOnFacet}, if the start-up cost function~$\StartupCostFunction(i,)$ is not strictly concave, \eg if $\StartupCostFunction(i,)$ is linear, then a vertex~$(\onOff(i,),\DiscreteStartupCostSum(\onOff(i,)))$ may fulfill a BTI induced by a binary tree~$B \in \scsBinaryTrees$ which is not a Cartesian tree for~$\onOff(i,)$. This leads to multiple binary trees inducing the same facet of $\scsEpigraph$, rendering the above \hrepresentation redundant.

However, if $\StartupCostFunction(i,)$ is strictly concave, \cref{result:SCS_PointsNotOnFacet} describes all vertices~$(\onOff(i,),\DiscreteStartupCostSum(\onOff(i,)))$ on a facet. By showing that different binary trees induce facets with different vertices, we prove that the given \hrepresentation is irredundant.
\begin{theorem}
	\label[theorem]{result:SCS_StrictlyConcave_Irredundant}
	If $\StartupCostFunction(i,)$ is strictly concave, then the \hrepresentation~\eqref{equation:SCS_HRepresentation} of $\scsEpigraph$ is irredundant.
\end{theorem}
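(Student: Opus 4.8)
The plan is to reduce the statement to a combinatorial fact about binary trees and then establish that fact by induction. Note first that $\scsEpigraph$ is full-dimensional in $\reals^{T+1}$ and that every binary tree inequality already induces a facet by \cref{result:SCS_Facets}. For a full-dimensional polyhedron an irredundant description contains exactly one inequality per facet, so it suffices to check that the listed inequalities induce pairwise distinct facets. The trivial inequalities $0 \leq \onOff(i,t) \leq 1$ carry coefficient $0$ on $\startupCostSum$ while every BTI carries coefficient $1$; hence no BTI coincides with a trivial inequality, and since the BTIs bound $\startupCostSum$ only from below they cannot imply the trivial inequalities, which are moreover pairwise distinct. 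Thus the whole claim reduces to showing that distinct binary trees $B_1 \neq B_2$ in $\scsBinaryTrees$ induce distinct facets, and I would do this by exhibiting a binary vector producing a vertex of $\scsEpigraph$ on one facet but not the other.

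The key preliminary step is a complete description of the vertices on a BTI facet. Under strict concavity of $\StartupCostFunction(i,)$, \cref{result:SCS_PointsOnFacet} and \cref{result:SCS_PointsNotOnFacet} combine into the equivalence: a vertex $(\onOff(i,),\DiscreteStartupCostSum(\onOff(i,))) \in \scsVertices$ lies on the facet induced by $B$ if and only if $\onOff(i,) = 0$ or the induced subgraph $\oneSubtree{\onOff(i,)}$ is a tree containing $\root(B)$. Call such an $\onOff(i,)$ \emph{admissible for~$B$}. This is exactly where strict concavity enters, through \cref{result:SCS_PointsNotOnFacet}. Consequently $B_1$ and $B_2$ induce the same facet if and only if they have the same family of admissible vectors, and it remains to prove that this family determines the tree.

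This combinatorial recovery is the heart of the argument, and I would carry it out by induction on the number of periods~$T$. Evaluating admissibility on the singletons $\onOff(i,) = \unitVec{t}$ shows that $\unitVec{t}$ is admissible iff $t = \root(B)$; hence equal admissible families force $\root(B_1) = \root(B_2) =: r$. By the rank-labeling, in either tree the left subtree occupies the node set $\set{1, \ldots, r-1}$ and the right subtree occupies $\set{r+1, \ldots, T}$ (see \eqref{equation:BT_SubtreeRanks}). For a subset $S \subseteq \set{1, \ldots, r-1}$, the vector supported on $S \cup \set{r}$ is admissible for~$B$ iff $S$ is admissible for the left subtree, because the only edge joining $\set{1, \ldots, r-1}$ to~$r$ runs to the left child of~$r$, so $S \cup \set{r}$ induces a tree through~$r$ exactly when $S$ is empty or induces a tree through the root of the left subtree. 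Thus equal admissible families for $B_1, B_2$ restrict to equal admissible families for their left subtrees, which by the induction hypothesis must coincide; the symmetric argument on $\set{r+1, \ldots, T}$ yields equal right subtrees. Hence $B_1 = B_2$, contradicting $B_1 \neq B_2$, so the representation is irredundant.

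The main obstacle I anticipate is precisely this last recovery step: one must arrange the induction so that restricting the admissible family to node sets lying in a principal subtree reproduces the admissible family of that subtree. This hinges on the rank-labeling splitting $\Periods$ into $\set{1, \ldots, r-1}$, $\set{r}$, and $\set{r+1, \ldots, T}$, and on the fact that a connected induced subgraph can reach the root only through a child of the root. Everything preceding the recovery is either a direct invocation of \cref{result:SCS_PointsOnFacet} and \cref{result:SCS_PointsNotOnFacet} or a routine check; strict concavity is used solely to obtain the vertex characterization.
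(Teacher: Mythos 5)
Your proof is correct, and while it shares its skeleton with the paper's proof — both reduce irredundancy to showing that distinct binary trees induce distinct facets, and both obtain the vertex characterization of a BTI facet (vertex on facet iff $\onOff(i,)=0$ or $\oneSubtree{\onOff(i,)}$ is a tree containing $\root(B)$) from exactly the same pair \cref{result:SCS_PointsOnFacet} and \cref{result:SCS_PointsNotOnFacet} — the combinatorial endgame is genuinely different. The paper, for each pair $B_1 \neq B_2$, \emph{explicitly constructs} a distinguishing vertex: the unit vector at $\root(B_1)$ if the roots differ, and otherwise the indicator vector of $P \cup \set{t}$, where $\set{t,\parentT(B_1,t)}$ is an edge in $E_1 \setminus E_2$ of minimal depth and $P$ is the path in $B_1$ from the root to $\parentT(B_1,t)$; it then needs a somewhat delicate rank-and-depth argument to verify $\parentT(B_2,t) \notin P$. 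You instead prove injectivity of the map from trees to their families of \enquote{admissible} vectors by induction on $T$: singleton vectors recover the root, and restricting the family to supports in $\set{1,\ldots,r-1}$ (resp.\ $\set{r+1,\ldots,T}$) unioned with $\set{r}$ recovers the admissible family of the left (resp.\ right) subtree, since the only edge joining $\set{1,\ldots,r-1}$ to $r$ goes to the left child of $r$. What the paper's route buys is an explicit separating certificate and no induction; what your route buys is a cleaner structural reconstruction that avoids the rank-comparison contradiction, plus a more careful treatment of the bookkeeping the paper leaves implicit — namely that irredundancy of a full-dimensional polyhedron's description reduces to all inequalities being facet-inducing with pairwise distinct facets, and that no BTI can induce the same facet as a trivial inequality (different coefficient on $\startupCostSum$, so they are not positive multiples of one another). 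One point you use silently but correctly: a BTI facet contains no recession ray of $\scsEpigraph$, so it is the convex hull of the vertices lying on it; the direction you actually need (equal facets imply equal vertex sets) is immediate in any case.
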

\begin{proof}
	For each $B_1,B_2 \in \scsBinaryTrees$ with $B_1 \neq B_2$, we construct a vertex~$(\onOff(,),\DiscreteStartupCostSum(\onOff(,))) \in \scsVertices$ such that $B_1$ is a Cartesian tree for $\onOff(,)$, but $B_2$ is not a Cartesian tree for $\onOff(,)$. Then, by \cref{result:SCS_PointsOnFacet}, $(\onOff(,),\DiscreteStartupCostSum(\onOff(,)))$ lies on the facet induced by $B_1$, and by \cref{result:SCS_PointsNotOnFacet}, $(\onOff(,),\DiscreteStartupCostSum(\onOff(,)))$ does not lie on the facet induced by $B_2$, proving that the induced facets are not equal.
	
	If $\root(B_1) \neq \root(B_2)$, then $(\unitVec{\root(B_1)},\DiscreteStartupCostSum(\unitVec{\root(B_1)}))$ clearly is such a vertex. Otherwise, we have $r := \root(B_1) = \root(B_2)$, and therefore the edge sets~$E_1$ of $B_1$ and $E_2$ of $B_2$ differ. Each edge in $B_1$ connects a node~$t$ to its parent~$\parentT(B_1,t)$. Choose $e = \set{t,\parentT(B_1,t)} \in E_1 \setminus E_2$ with minimal~$\depth(t)$. Since $e \notin E_2$ and $t \neq r$, $t$ has different parents $p_1 := \parentT(B_1,t)$ and $p_2 := \parentT(B_2,t)$ in $B_1$ and $B_2$. Now, denote the path from $r$ to $p_1$ in $B_1$ by $P$, and define $\onOff(,) \in \set{0,1}^T$ as
	\begin{equation*}
		\forall\, \tau \in \Periods: \qquad \onOff(,\tau) := \begin{cases}
			1 & \text{if $\tau = t$ or $\tau \in P$,}\\
			0 & \text{else.}
		\end{cases}
	\end{equation*}
	By definition, $B_1$ is a Cartesian tree for $\onOff(,)$.
	
	We conclude this proof by showing that $p_2 \notin P$, which implies $\onOff(,p_2) = 0$, and thereby that $B_2$ is not a Cartesian tree for $\onOff(,)$: Assume $p_2 \in P$. Then, since choosing $e$ such that $\depth(t)$ is minimal, we have $P \subseteq B_2$, and therefore $p_1$ is a descendant of $p_2$ in both $B_1$ and $B_2$. Since $t$ is a child of $p_2$ in $B_2$ and $t \notin P$, it holds that $t$ and $p_1$ are separated to the subtrees~$\leftSubtreeT(B_2,p_2)$ and $\rightSubtreeT(B_2,p_2)$ of $p_2$, and thus either $t < p_2 < p_1$ or $p_1 < p_2 < t$. However, $t$ is a child of $p_1$ in $B_1$, and thus both, $p_1$ and $t$, either lie in the left subtree~$\leftSubtreeT(B_1,p_2)$ or right subtree~$\rightSubtreeT(B_1,p_2)$ of $p_2$ in $B_1$. Since this implies either $t,p_1 < p_2$ or $p_2 < t,p_1$, we obtain a contradiction, proving $p_2 \notin P$.\manualqed
\end{proof}

By \cite{rosen_handbook_1999}, the number of different binary trees on $T$ nodes is the $T$-th Catalan number, which allows us to count the number of facets of $\scsEpigraph$.
\begin{corollary}
	\label[corollary]{result:SCS_CatalanManyFacets}
	If $\StartupCostFunction(i,)$ is strictly concave, the number of facets of $\scsEpigraph$ is
	\begin{equation*}
		C_T + 2T \sim \frac{4^T}{T^{\frac{3}{2}} \sqrt{\pi}},
	\end{equation*}
	where $C_T$ denotes the $T$-th Catalan number.
\end{corollary}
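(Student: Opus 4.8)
The plan is to deduce the count directly from the irredundancy of the \hrepresentation~\eqref{equation:SCS_HRepresentation}, established in \cref{result:SCS_StrictlyConcave_Irredundant}, combined with the classical enumeration of binary trees. The key conceptual point is to translate ``irredundant \hrepresentation'' into ``number of facets'', which requires a full-dimensionality observation; the rest is bookkeeping.

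First I would observe that $\scsEpigraph$ is full-dimensional in $\reals^{T+1}$. Indeed, its projection onto the first $T$ coordinates is $[0,1]^T$, on which the convex, finite-valued function $\StartupCostSum$ is continuous; hence for an interior point $\onOff(i,)$ of $[0,1]^T$ and any $\startupCostSum > \StartupCostSum(\onOff(i,))$ a small ball around $(\onOff(i,),\startupCostSum)$ lies in $\scsEpigraph$. For a full-dimensional polyhedron every facet is induced by a unique inequality up to positive scaling, so an irredundant \hrepresentation contains exactly one inequality per facet: were two of its inequalities to induce the same facet, one could be dropped, and an inequality inducing a face of dimension below~$T$ would likewise be removable—both contradicting irredundancy. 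Thus, by \cref{result:SCS_HRepresentation} (the representation is valid) and \cref{result:SCS_StrictlyConcave_Irredundant} (it is irredundant under strict concavity), the number of facets of $\scsEpigraph$ equals the number of inequalities appearing in~\eqref{equation:SCS_HRepresentation}.

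Next I would count these inequalities. The representation consists of the $2T$ trivial inequalities $0 \leq \onOff(i,t)$ and $\onOff(i,t) \leq 1$ for $t \in \Periods$, together with one BTI per binary tree $B \in \scsBinaryTrees$. A rank-labeled binary tree is completely determined by its underlying shape: the rank function is forced to number the nodes from left to right (the in-order positions), so each of the unlabeled binary-tree shapes on $T$ nodes corresponds to exactly one element of $\scsBinaryTrees$ via this canonical labeling. Since the number of binary trees on $T$ nodes is the $T$-th Catalan number $C_T$ (\cite{rosen_handbook_1999}), we obtain $\abs{\scsBinaryTrees} = C_T$, and therefore $C_T + 2T$ facets in total.

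Finally, the asymptotic follows from $C_T = \frac{1}{T+1}\binom{2T}{T}$ together with Stirling's formula, which yield $C_T \sim \frac{4^T}{T^{\frac{3}{2}} \sqrt{\pi}}$; as the linear term $2T$ is negligible against this exponential growth, $C_T + 2T \sim \frac{4^T}{T^{\frac{3}{2}} \sqrt{\pi}}$, as claimed. I expect the only genuinely delicate step to be the first one—the passage from irredundancy to a facet count—since it relies on full-dimensionality of $\scsEpigraph$; absent that, an irredundant representation need not be in bijection with the facets. Everything downstream is elementary, leaning on \cref{result:SCS_StrictlyConcave_Irredundant} for the distinctness of the tree-induced facets and on the Catalan enumeration for their number.
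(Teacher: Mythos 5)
Your proposal is correct and follows essentially the same route as the paper, which states this corollary without an explicit proof as an immediate consequence of \cref{result:SCS_HRepresentation}, \cref{result:SCS_StrictlyConcave_Irredundant}, and the Catalan enumeration of binary trees cited just before it. Your added full-dimensionality argument, justifying that an irredundant \hrepresentation of $\scsEpigraph$ has exactly one inequality per facet, is a detail the paper leaves implicit, and you supply it correctly.
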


\section{Separation}
\label{section:SCS_Separation}

Generally, the \hrepresentation of $\scsEpigraph$ given in the last subsection is of exponential size, and thus not (directly) suitable for computational purposes. This is overcome by a cutting plane approach based on an exact separation algorithm for $\scsEpigraph$ presented in this section. Assuming that the start-up cost function~$\StartupCostFunction{i}{}$ can be evaluated in $\bigO(1)$, as is the case for the exponential start-up cost function \eqref{equation:SC_FunctionForTime}, we show that this separation algorithm has a running time of $\bigO(T)$. Otherwise, the running time would change proportionally.

\Cref{result:SCS_FracPointsOnFacet} states that a point $(\onOff(i,),\startupCostSum) \in [0,1]^T \times \reals$ lies in $\scsEpigraph$ iff the BTI induced by the Cartesian tree for~$\onOff(i,)$ is fulfilled. Thus, the idea of the separation algorithm for $\scsEpigraph$ is to find a Cartesian tree for the vector~$\onOff(i,)$, and construct its induced BTI.

A linear-time algorithm for finding a Cartesian tree for $\onOff(i,)$ has already been given in \cite{gabow_scaling_1984}, and is denoted as \emph{FindCartesianTree}. In summary, this algorithm starts with a tree with a single node $1$, and iteratively adds the remaining nodes~$t \in \discrange{2}{T}$. The key observation is that the node~$t$ in each iteration must be added such that it results to be
\begin{itemize*}
  \item the last top-right node (as to receive the correct rank~$t$), and to be
  \item beneath all top-right nodes~$\tau$ with $\onOff{i}{\tau} > \onOff(i,t)$ and above all top-right nodes~$\tau$ with $\onOff{i}{\tau} < \onOff(i,t)$.
\end{itemize*}

The algorithm represents the resulting Cartesian tree by the left and right childs~$\leftChild(t)$ and $\rightChild(t)$ of each node~$t \in \Periods$. To construct the induced BTI, its coefficients
\begin{equation}
	\label{equation:SCS_CoefficientsForAlg}
	\bticoeff(t) = \scsSumDiff{t}(\leftSize(t),\rightSize(t)) = \StartupCostFunction{i}{\offlineLengthTree{\leftSubtree(t)}} + \StartupCostFunction{i}{\offlineLengthTree{\rightSubtree(t)}} - \StartupCostFunction{i}{\offlineLengthTree{\subtree(t)}}
\end{equation}
need to be computed. This requires the subtree sizes~$\leftSize(t)$, $\rightSize(t)$ and $\size(t)$, which due to \eqref{equation:BT_SubtreeRanks} and \eqref{equation:BT_ChildRank} equal
\begin{align}
	\label{equation:SCS_LeftSizeAlg}
	\leftSize(t) &= \size{\leftChild(t)} = \leftSize{\leftChild(t)} + 1 + \rightSize{\leftChild(t)} = \leftSize{\leftChild(t)} + t - \leftChild(t),\\
	\label{equation:SCS_RightSizeAlg}
	\rightSize(t) &= \size{\rightChild(t)} = \leftSize{\rightChild(t)} + 1 + \rightSize{\rightChild(t)} = \rightChild(t) - t + \rightSize{\rightChild(t)},\\
	\label{equation:SCS_SizeAlg}
	\size(t) &= \leftSize(t) + 1 + \rightSize(t).
\end{align}

\noindent By the definition of the offline lengths $\offlineLength(t,l)$ (see \eqref{equation:SC_OfflineLength}), we have
\begin{equation*}
	\forall\ t \in \Periods, l_1 \in \discrange*{0}{t-1}, l_2 \in \discrange*{1}{t-l_1-1}: \quad \offlineLength(t,l_1 + l_2) = \offlineLength(t,l_1) + \offlineLength(t-l_1,l_2),
\end{equation*}
and the desired offline lengths $\offlineLengthTree{\leftSubtree(t)}$, $\offlineLengthTree{\rightSubtree(t)}$ and $\offlineLengthTree{\subtree(t)}$ may be derived from \eqref{equation:SC_OfflineLengthTree},\eqref{equation:SC_OfflineLengthTreeLeftRight} for each $t \in \Periods$ as
\begin{align}
	\label{equation:SCS_OLTLeft_Formula}
	\offlineLengthTree{\leftSubtree(t)} &= \offlineLength{t}{\leftSize(t)} = \offlineLength(t,t-1) - \begin{cases}
			\offlineLength{t - \leftSize(t)}{t - \leftSize(t)-1} & \text{if $\leftSize(t) < t-1$},\\
			0 & \text{else.}
	\end{cases}\\
	\label{equation:SCS_OLTRight_Formula}
	\offlineLengthTree{\rightSubtree(t)} &= \offlineLength{t+\rightSize(t)+1}{\rightSize(t)} = \offlineLength{t + \rightSize(t)+1}{t+\rightSize(t)} - \offlineLength{t+1}{t},\\
	\label{equation:SCS_OLTPrincipal_Formula}
	\offlineLengthTree{\subtree(t)} &= \offlineLength{t+\rightSize(t)+1}{\size(t)} = \offlineLengthTree{\leftSubtree(t)} + \offlineLengthTree{\rightSubtree(t)} + \periodLength(t).
\end{align}

\noindent Using \eqref{equation:SCS_CoefficientsForAlg}-\eqref{equation:SCS_OLTPrincipal_Formula}, the coefficients $\bticoeff(t)$ of the induced BTI can be computed in linear time.

\newpage
\begin{proposition}
	\label[proposition]{result:SCS_SeparationAlgorithm}
	Algorithm~\ref{algorithm:SCS_Separation} solves the separation problem for $\scsEpigraph$ in $\bigO(T)$.
\end{proposition}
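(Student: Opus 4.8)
The plan is to reduce the separation problem to a single membership test. By \cref{result:SCS_Separation}, which is the separation-oriented restatement of \cref{result:SCS_FracPointsOnFacet}, for any $\onOff(i,) \in [0,1]^T$ and any Cartesian tree $B \in \scsBinaryTrees$ for $\onOff(i,)$, the point $(\onOff(i,),\startupCostSum)$ lies in $\scsEpigraph$ \emph{if and only if} it satisfies the single BTI induced by $B$. Hence it suffices to (i) verify the box constraints $0 \leq \onOff(i,t) \leq 1$, (ii) build one Cartesian tree $B$ for $\onOff(i,)$, (iii) evaluate its coefficients $\bticoeff(t) = \scsSumDiff{t}(\leftSize(t),\rightSize(t))$, and (iv) test that one inequality. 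The proof then splits into a \emph{correctness} part, showing the algorithm returns a genuinely violated valid inequality exactly when $(\onOff(i,),\startupCostSum) \notin \scsEpigraph$, and a \emph{running-time} part, showing every step costs $\bigO(T)$.

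For correctness I would argue as follows. If some coordinate violates $0 \leq \onOff(i,t) \leq 1$, this is detected by a single scan and the corresponding trivial inequality -- which is facet-inducing and part of the \hrepresentation~\eqref{equation:SCS_HRepresentation} -- is returned as a separator. Otherwise $\onOff(i,) \in [0,1]^T$, a rank-labeled Cartesian tree exists (\cite{gabow_scaling_1984}) and \emph{FindCartesianTree} produces one, represented by the arrays $\leftChild(t)$ and $\rightChild(t)$. The algorithm then forms the BTI induced by $B$ with coefficients as in~\eqref{equation:SCS_CoefficientsForAlg}. By \cref{result:SCS_Separation} the point lies in $\scsEpigraph$ iff it satisfies this BTI, so comparing $\startupCostSum$ against $\sum_{t \in \Periods} \bticoeff(t)\,\onOff(i,t)$ decides membership correctly; and when the BTI is violated it is a valid (\cref{result:SCS_EpigraphInPolyhedron}) and even facet-inducing (\cref{result:SCS_Facets}) separating inequality.

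The running-time bound is where the real work lies. The box scan and \emph{FindCartesianTree} are each $\bigO(T)$, the latter by \cite{gabow_scaling_1984}, so the crux is computing all $T$ coefficients $\bticoeff(t)$ in total time $\bigO(T)$ rather than the naive $\bigO(T^2)$ one obtains by evaluating each subtree size and offline length independently. I would first compute the subtree sizes incrementally: since a left child has smaller rank and a right child larger rank than its parent, a single increasing sweep $t = 1, \ldots, T$ evaluates every $\leftSize(t)$ from $\leftSize(\leftChild(t))$ via~\eqref{equation:SCS_LeftSizeAlg} in $\bigO(1)$, a single decreasing sweep $t = T, \ldots, 1$ evaluates every $\rightSize(t)$ from $\rightSize(\rightChild(t))$ via~\eqref{equation:SCS_RightSizeAlg}, and then $\size(t)$ follows from~\eqref{equation:SCS_SizeAlg}. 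Next I would precompute the prefix sums $\sum_{\tau = 1}^{k}\periodLength(\tau)$ in $\bigO(T)$, so that every offline length $\offlineLength(t,l)$ -- and hence each of $\offlineLengthTree{\leftSubtree(t)}$, $\offlineLengthTree{\rightSubtree(t)}$, $\offlineLengthTree{\subtree(t)}$ via~\eqref{equation:SCS_OLTLeft_Formula}--\eqref{equation:SCS_OLTPrincipal_Formula} -- is obtained in $\bigO(1)$, with the boundary term $\preOffline(i)$ handled by a single prefix-sum lookup. With these in hand, each $\bticoeff(t)$ requires only a constant number of $\bigO(1)$ evaluations of $\StartupCostFunction{i}{\cdot}$, so forming all coefficients and the final inner product $\sum_{t} \bticoeff(t)\,\onOff(i,t)$ costs $\bigO(T)$, giving the claimed total bound.

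The main obstacle I anticipate is precisely this amortization: establishing that the recursions~\eqref{equation:SCS_LeftSizeAlg}--\eqref{equation:SCS_SizeAlg} are acyclic under the stated processing orders (so that each value is already available when needed) and that the offline-length formulas~\eqref{equation:SCS_OLTLeft_Formula}--\eqref{equation:SCS_OLTPrincipal_Formula} genuinely reduce to constant-time prefix-sum lookups, including the pre-model offline time $\preOffline(i)$. Everything else -- correctness of the single-inequality test and the linear-time Cartesian-tree construction -- is inherited directly from \cref{result:SCS_Separation} and \cite{gabow_scaling_1984}, respectively; the only standing assumption is that $\StartupCostFunction{i}{\cdot}$ is evaluable in $\bigO(1)$, as already noted for the exponential cost function.
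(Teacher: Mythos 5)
Your proposal is correct and follows essentially the same route as the paper: the paper's justification (given in the text preceding the proposition and in Algorithm~\ref{algorithm:SCS_Separation} itself) likewise reduces separation to testing the single BTI of a Cartesian tree via \cref{result:SCS_FracPointsOnFacet}/\cref{result:SCS_Separation}, builds the tree with \emph{FindCartesianTree} from \cite{gabow_scaling_1984}, and computes the coefficients in linear time with exactly your increasing/decreasing sweeps for $\leftSize(t)$, $\rightSize(t)$ via \eqref{equation:SCS_LeftSizeAlg}--\eqref{equation:SCS_SizeAlg} and cumulative offline lengths (prefix sums including $\preOffline(i)$) via \eqref{equation:SCS_OLTLeft_Formula}--\eqref{equation:SCS_OLTPrincipal_Formula}. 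Your explicit box-constraint scan and the remark that the violated BTI is facet-inducing are harmless additions beyond what the paper states.
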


\begin{algorithm}[!ht]
	\caption{SeparateBTI}
	\label{algorithm:SCS_Separation}
	\SetKwInOut{Input}{Input}
	\SetKwInOut{Output}{Output}
	\SetKwFunction{FindCartesianTree}{FindCartesianTree}

	\Input{Point $(\onOff(i,),\startupCostSum) \in [0,1]^T \times \reals$}
	\Output{$(\onOff(i,),\startupCostSum) \in \scsEpigraph$, or a separating inequality.}

	\vskip2.5ex
	$B \gets$ FindCartesianTree($\onOff(i,)$)\;

	\vskip2.5ex
	\For{$t = 1, \ldots, T$}{
		$\leftSize(t) := \begin{cases}
			\leftSize{\leftChild(t)} + t - \leftChild(t) & \text{if $\leftChild(t) \neq \emptyset$}\\
			0 & \text{else\;}
		\end{cases}$
	}
	\For{$t = T, \ldots, 1$}{
		$\rightSize(t) := \begin{cases}
			\rightChild(t) - t + \rightSize{\rightChild(t)} & \text{if $\rightChild(t) \neq \emptyset$}\\
			0 & \text{else\;}
		\end{cases}$\\
		$\size(t) := \rightSize(t) + \leftSize(t) + 1$\;
	}

	\vskip2.5ex
	$\offlineLength(1,0) := \preOffline(i)$\;
	\For{$t = 2,\ldots,T$}{
		$\offlineLength(t,t-1) := \offlineLength(t-1,t-2) + \periodLength(t-1)$\;
	}
	\For{$t = 1,\ldots,T$}{
		$\offlineLengthTree{\leftSubtree(t)} := \offlineLength(t,t-1) - \begin{cases}
			\offlineLength{t - \leftSize(t)}{t - \leftSize(t)-1} & \text{if $\leftSize(t) < t-1$},\\
			0 & \text{else.}\;
		\end{cases}$\\
		$\offlineLengthTree{\rightSubtree(t)} := \offlineLength(t + \rightSize(t)+1,t + \rightSize(t)) - \offlineLength{t+1}{t}$\;
		$\offlineLengthTree{\subtree(t)} := \offlineLengthTree{\leftSubtree(t)} + \periodLength(t) + \offlineLengthTree{\rightSubtree(t)}$\;
	}

	\vskip2.5ex
	\For{$t \in \Periods$}{
		$\bticoeff(t) := \begin{cases}
			\StartupCostFunction{i}{\offlineLengthTree{\leftSubtree(t)}} + \StartupCostFunction{i}{\offlineLengthTree{\rightSubtree(t)}} - \StartupCostFunction{i}{\offlineLengthTree{\subtree(t)}} & \text{if $t + \rightSize(t) < T$,}\\
			\StartupCostFunction{i}{\offlineLengthTree{\leftSubtree(t)}} & \text{else.}\;
		\end{cases}$
	}
	\If{$\ds\startupCostSum \geq \sum_{t \in \Periods} \bticoeff(t) \onOff(i,t)$}{
		\Return $(\onOff(i,),\startupCostSum) \in \scsEpigraph$\;
	} \Else {
		\Return $(\onOff(i,),\startupCostSum)$ may be separated from $\scsEpigraph$ by the BTI with coefficients~$\bticoeff(t)$\;
	}
\end{algorithm}

\newpage
\section{Experiments}
\label{section:Experiments}

\def\defaultCompany{}

This section mainly compares the integrality gap of a Unit Commitment problem which models $\scsEpigraph$ using binary tree inequalities (\emph{BTI}) to a Unit Commitment problem using the start-up cost formulations presented in \cite{carrion_computationally_2006} (\emph{1-Bin}), \cite{simoglou_optimal_2010} (\emph{3-Bin}), and \cite{silbernagl_improving_2014} (\emph{1-Bin*} and \emph{Temp}).

\subsection{Scenario}
\label{section:EX_Parameters}

We model the German electricity system in 2025, using the same scenario as \cite{silbernagl_improving_2014}. For each unit~$i \in \Units$, we consider the following parameters:
\begin{itemize*}
  \item the variable and fixed production cost coefficients $A_i$ and $B_i$,
  \item the minimal and maximal production $\minProd(i)$ and $\maxProd(i)$,
  \item the maximal upward ramping speed while online ($\rampup(i)$) and at start-up ($\startupRamp(i)$), and
  \item the maximal downward ramping speed while online ($\rampdown(i)$) and at shutdown ($\shutdownRamp(i)$).
\end{itemize*}
The values of these parameters are based on the the German power system (\cite{german_federal_network_agency_list_2014}) and complemented by assumptions on minimal production, efficiency, and start-up costs, which are partly derived from \cite{kumar_power_2012}, \cite{eurelectric_efficiency_2003}, and \cite{egerer_electricity_2014}. As is expected for 2025, all nuclear units are replaced by combined cycle plants, resulting in a total of 223 thermal units.

The demand data is taken from \cite{entsoe_hourly_2007} and scaled to a yearly consumption of 520~TWh. As wind, solar, biomass, and hydro plants are not modeled explicitly, their expected production is subtracted from this demand, yielding the residual demand~$d^t$. The production from wind and solar plants is computed based on \cite{rienecker_merra:_2011} in combination with the expected installed capacity in 2025 (50~GW wind capacity, 50~GW solar capacity). Biomass and hydro plants are assumed to produce at full capacity (5.5~GW biomass capacity, 4.5~GW hydro capacity).

\subsection{Base Unit Commitment Formulation}

As the focus of the experiments lies on comparing start-up cost formulations, we restrict ourselves to a basic model for the Unit Commitment problem. Its objective is to fulfill the residual demand~$d^t$ in each period~$t$ \eqref{equation:EX_Demand}, while minimizing production costs~$\prodCost(i,t)$ and start-up costs~$\startupCostSum$ \eqref{equation:EX_Objective}. The production costs~$\prodCost(i,t)$ of unit~$i$ in period~$t$ are modeled as a linear function of its electricity output~$\prod(i,t)$ and its operational state~$\onOff(i,t)$ in \eqref{equation:EX_ProdCosts}. The production limits in \eqref{equation:EX_ProdLimits} and the ramping limits in \eqref{equation:EX_RampUp}-\eqref{equation:EX_Shutdown} are modeled as in \cite{carrion_computationally_2006}.

\begin{alignat}{2}
	\label{equation:EX_Objective}
	& \min \smash[b]{\sum_{i \in \Units\mkern3mu} \sum_{t \in \Periods} \prodCost(i,t) + \sum_{i \in \Units} \startupCostSum}&&\\[1.5ex]
	\label{equation:EX_Demand}
	\textstyle\sum_{i \in \Units} \prod(i,t) &= d^t &\qquad& \forallPeriods,\\
	\label{equation:EX_ProdCosts}
	\prodCost(i,t) &= A_i \prod(i,t) + B_i \onOff(i,t) && \forallUnitsPeriods\\
	\label{equation:EX_ProdLimits}
	\minProd(i) \onOff(i,t) &\leq \prod(i,t) \leq \maxProd(i) \onOff(i,t) && \forall\,i \in \Units, t \in \discrange{1}{T},\\
	\label{equation:EX_RampUp}
	\prod(i,t) &\leq \prod(i,t-1) + \rampup(i) \onOff(i,t-1) + \startupRamp(i) (\onOff(i,t) - \onOff(i,t-1)) + \maxProd(i)(1-\onOff(i,t)) && \forall\,i \in \Units, t \in \discrange{2}{T},\\
	\label{equation:EX_RampDown}
	\prod(i,t) &\geq \prod(i,t-1) - \rampdown(i) \onOff(i,t) - \shutdownRamp(i) (\onOff(i,t-1) - \onOff(i,t)) + \maxProd(i)(1-\onOff(i,t-1)) && \forall\,i \in \Units, t \in \discrange{2}{T},\\
	\label{equation:EX_Shutdown}
	\prod(i,t) &\leq \maxProd(i) \onOff(i,t+1) + \shutdownRamp(i) (\onOff(i,t) - \onOff(i,t+1)) && \forall\,i \in \Units,t \in \discrange*{1}{T-1},\\
	\label{equation:EX_OperationalState}
	\onOff(i,t) &\in \set{0,1},	\prod(i,t), \prodCost(i,t) \in \nnreals && \forallUnitsPeriods.
\end{alignat}

\subsection{Start-up Cost Formulations}

The term~$\startupCostSum$ in \eqref{equation:EX_Objective} 
represents the summed start-up costs of unit~$i$ according to the thermal start-up cost function in \eqref{equation:SC_ThermalStartupCosts},
\begin{equation*}
	\forallUnits, L \in \nnreals: \qquad \StartupCostFunction(i,L) = \smash[t]{\begin{cases}
		\heatingCost(i) (1 - \exp{\heatloss(i) L}) + \fixedStartupCost(i) & \text{if $L > 0$,}\\
		0 & \text{else.}
	\end{cases}}
\end{equation*}
In \eqref{equation:SC_FunctionForPeriods}, this function is discretized such that $\StartupCost(i,t,l)$ corresponds to the start-up cost incurred by unit~$i$ in period~$t$ after an offline time of $l$ periods. While the formulations 1-Bin, 1-Bin* and 3-Bin generally model a piece-wise constant approximation of these start-up costs, we use the unapproximated values $\StartupCost(i,t,l)$ for comparability with Temp.

\begin{samepage}
Each of the start-up cost formulations models $\startupCostSum$ differently:
\begin{itemize}
\item In 1-Bin (\cite{carrion_computationally_2006}), the start-up costs~$\startupCost(i,t)$ in period~$t$ are modeled as
\begin{alignat}{2}
	\label{equation:Ex_1Bin}
	\startupCost(i,t) &\geq \StartupCost(i,t,l) \Big( \onOff(i,t) - \sum_{\idx \in \discrange{1}{l}} \onOff(i,t-\idx) \Big) &\qquad&
	\forallUnitsPeriods, l \in \discrange*{0}{t-1},
\end{alignat}
and the summed start-up cost of unit~$i$ is $\startupCostSum := \sum_{t \in \Periods} \startupCost(i,t)$.

\item Similarly, 1-Bin* (\cite{silbernagl_improving_2014}) models $\startupCostSum := \sum_{t \in \Periods} \startupCost(i,t)$, with
\begin{alignat}{2}
	\label{equation:Ex_1BinT}
	\hskip-1em\startupCost(i,t) &\geq \StartupCost(i,t,l)\onOff(i,t) - \sum_{\mathclap{\idx \in \discrange{1}{l}}} \Big(\StartupCost(i,t,l) - \StartupCost(i,t,\idx\!-\!1)\Big) \onOff(i,t-\idx) &\quad& \forallUnitsPeriods, l \in \discrange*{0}{t-1}.%
\end{alignat}

\item 3-Bin (\cite{simoglou_optimal_2010}) uses the start-up and shutdown indicators (\cite{garver_power_1962}),
\begin{alignat}{2}
	\label{equation:Ex_IndicatorsFirst}
	\startupIndicator(i,1) - \shutdownIndicator(i,1) &= \begin{cases}
		\onOff(i,1) & \text{if $\preOffline(i) > 0$,}\\
		\onOff(i,1) - 1 & \text{else,}
	\end{cases} &\qquad& \forallUnits,\\
	\label{equation:Ex_Indicators}
	\startupIndicator(i,t) - \shutdownIndicator(i,t) &= \onOff(i,t) - \onOff(i,t-1) && \forallUnits, t \in \discrange{2}{T},\\
	\label{equation:Ex_IndicatorsNN}
	\startupIndicator(i,t),\shutdownIndicator(i,t) &\geq 0 && \forallUnitsPeriods.
\end{alignat}
In the special case of unapproximated start-up costs, this formulation simplifies to
\begin{alignat}{2}
	\startupIndicator(i,t) &= \sum_{l \in \discrange*{1}{t-1}} \startupType(i,t,l) &\qquad& \forallUnitsPeriods,\\
	\startupType(i,t,l) &\leq \shutdownIndicator(i,t-l) && \forallUnitsPeriods,l \in \discrange*{1}{t-2},\\
	\startupType(i,t,l) &\geq 0 && \forallUnitsPeriods,l \in \discrange*{1}{t-1},\\
	\startupCostSum &:= \sum_{t \in \Periods} \sum_{l \in \discrange*{1}{t-1}} \StartupCost(i,t,l) \startupType(i,t,l) && \forallUnits.
\end{alignat}

\item Temp (\cite{silbernagl_improving_2014}) also uses the start-up and shutdown indicators in \eqref{equation:Ex_IndicatorsFirst}-\eqref{equation:Ex_IndicatorsNN}, and models the start-up costs as
\begin{alignat}{2}
	\temp(i,1) &= \exp{-\heatloss(i)\preOffline(i)} + \heating(i,0) &\qquad& \forallUnits,\\
	\temp(i,t+1) &= \exp{-\heatloss(i)\periodLength(t)} \temp(i,t) + (1 - \exp{-\heatloss(i)\periodLength(t)}) \onOff(i,t) + \heating(i,t) && \forallUnits, t \in \discrange*{1}{T-1},\\
	\onOff(i,t) &\leq \temp(i,t) \leq 1  && \forallUnitsPeriods,\\
	\label{equation:EX_RTI0}
	\temp(i,t) &\geq \onOff(i,t) + \exp{-\heatloss(i)\offlineLength(t,t-1)}(1-\onOff(i,t)) && \forallUnitsPeriods,\\
	\label{equation:EX_RTI}
	\temp(i,t) &\geq \onOff(i,t) + \exp{-\heatloss(i)\offlineLength(t,l)}(\temp(i,t-l)-\onOff(i,t)) && \forallUnitsPeriods,l \in \discrange*{1}{t-2},\\
	\heating(i,t) &\geq 0 && \forallUnits,t \in \discrange*{0}{T-1},\\
	\startupCostSum &:= \fixedStartupCost(i) \sum_{t \in \Periods} \startupIndicator(i,t) + \heatingCost(i) \sum_{t \in \discrange*{0}{T-1}} \heating(i,t) && \forallUnits.
\end{alignat}
\item Finally BTI denotes the \hrepresentation of $\scsEpigraph$ in \cref{result:SCS_HRepresentation}.
\end{itemize}
\end{samepage}

\subsection{Integrality Gap}
\label{section:EX_IntGap}

\cref{figure:IntGap} shows the integrality gaps of all formulations for 14 test cases with $72$ one-hourly periods, \ie $T = 72$ and $\periodLength(1) = \ldots = \periodLength(T) = 1 \text{ hour}$. The modeled time ranges are spread uniformly over  the year 2025, starting in its $S$-th hour with $S \in \set{433 + 624\idx \given \idx \in \discrange{0}{13}}$. The values of $S$ are chosen such that each time range starts at midnight, and exactly two time ranges start on each day of the week.

\cite{silbernagl_thesis} shows that Temp is an extended formulation of $\scsEpigraph$, reducing the number of constraints by introducing the additional variables $\temp(i,t)$, $\heating(i,t-1)$ and $\startupIndicator(i,t)$ for all $i \in \Units$, $t \in \Periods$. Hence its integrality gap is equal to that of BTI, up to numerical inaccuracies. Both lead to the smallest integrality gap, which is expected since all start-up cost formulations model a set which encloses $\scsEpigraph$; The integrality gaps of 3-Bin, 1-Bin* and 1-Bin are significantly larger on average (\cref{figure:IntGap}, left chart). By expressing their integrality gaps relative to BTI, we see that BTI and Temp dominate the other models. (\cref{figure:IntGap}, right chart).

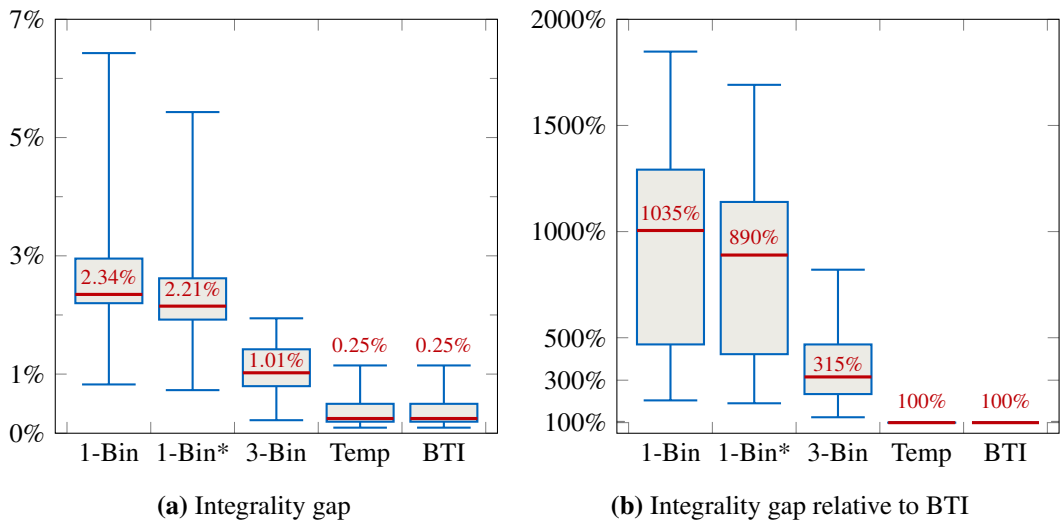
\begin{figure*}[htb]
	\label{fig:base}
	\newcommand{\chartwidth}{74mm}
	\newcommand{\chartheight}{70.7mm}
	\centering
	\begin{subfigure}[]{.477\columnwidth}
		\flushleft
		\begin{tikzpicture}
	\begin{axis}[
		tick label style={font=\small},	%
		boxplot/draw direction=y,
		width = \chartwidth,
		height = \chartheight,
		xtick={0.5,1.5,2.5,3.5,4.5,5.5},
		x tick label as interval=true,
		xticklabels={1-Bin,1-Bin*,3-Bin,Temp,BTI},
		enlarge x limits = 0.05,
		ymin=0,
		ymax=0.07,
		ytick={0,0.01,0.03,0.05,0.07},
		yticklabels={$0\%$,$1\%$,$3\%$,$5\%$,$7\%$},
		scaled y ticks = false,
		minor ytick={0.02,0.04,0.06},
		boxplot/every box/.style={fill=quantileBack,draw=quantileColor,mark=quantileColor,thick},
		boxplot/every whisker/.style={draw=quantileColor,thick},
		boxplot/every median/.style={draw=medianColor,very thick},
	]

		\addplot[mark=+,boxplot prepared={
			lower whisker=0.008249911,
			lower quartile=0.029534731,
			median=0.023473796,
			upper quartile=0.0219909,
			upper whisker=0.064282901
        }] table[row sep=\\,y index=0] {data\\};
        \node[anchor=south,medianColor] at (axis cs:1,0.023473796) {\scriptsize$2.34\%$};
		\addplot[mark=+,boxplot prepared={
			lower whisker=0.007288354,
			lower quartile=0.026200701,
			median=0.021492432,
			upper quartile=0.01921532,
			upper whisker=0.054309791
        }] table[row sep=\\,y index=0] {data\\};
        \node[anchor=south,medianColor] at (axis cs:2,0.021492432) {\scriptsize$2.21\%$};
		\addplot[mark=+,boxplot prepared={
			lower whisker=0.002203362,
			lower quartile=0.014194978,
			median=0.010220268,
			upper quartile=0.0079585,
			upper whisker=0.019434
        }] table[row sep=\\,y index=0] {data\\};
        \node[medianColor] at (axis cs:3,0.0123) {\scriptsize$1.01\%$};
		\addplot[mark=+,boxplot prepared={
			lower whisker=0.000943012,
			lower quartile=0.004972636,
			median=0.002488546,
			upper quartile=0.001950082,
			upper whisker=0.011463192
        }] table[row sep=\\,y index=0] {data\\};
        \node[medianColor] at (axis cs:4,0.015) {\scriptsize$0.25\%$};
		\addplot[mark=+,boxplot prepared={
			lower whisker=0.000943012,
			lower quartile=0.004972966,
			median=0.002488546,
			upper quartile=0.001951774,
			upper whisker=0.011463192
        }] table[row sep=\\,y index=0] {data\\};
        \node[medianColor] at (axis cs:5,0.015) {\scriptsize$0.25\%$};
    \end{axis}
\end{tikzpicture}%
		\caption{Integrality gap}
	\end{subfigure}
	\begin{subfigure}[]{.513\columnwidth}
		\centering
		\begin{tikzpicture}
	\begin{axis}[
		tick label style={font=\small},	%
		boxplot/draw direction=y,
		width = \chartwidth,
		height = \chartheight,
		xtick={0.5,1.5,2.5,3.5,4.5,5.5},
		x tick label as interval=true,
		xticklabels={1-Bin,1-Bin*,3-Bin,Temp,BTI},
		enlarge x limits = 0.05,
		ymin=0.5,
		ymax=20,
		ytick={1,3,5,10,15,20},
		yticklabels={$100\%$,$300\%$,$500\%$,$1000\%$,$1500\%$,$2000\%$},
		minor ytick={1.02,1.04,1.06},
		boxplot/every box/.style={fill=quantileBack,draw=quantileColor,mark=quantileColor,thick},
		boxplot/every whisker/.style={draw=quantileColor,thick},
		boxplot/every median/.style={draw=medianColor,very thick},
	]

		\addplot[mark=+,boxplot prepared={
			lower whisker=2.048432929,
			lower quartile=12.91824627,
			median=10.0512265,
			upper quartile=4.684969478,
			upper whisker=18.47902544
        }] table[row sep=\\,y index=0] {data\\};
        \node[anchor=south,medianColor] at (axis cs:1,10.0512265) {\scriptsize$1035\%$};
		\addplot[mark=+,boxplot prepared={
			lower whisker=1.911802364,
			lower quartile=11.39629007,
			median=8.895200798,
			upper quartile=4.225629963,
			upper whisker=16.9135323
        }] table[row sep=\\,y index=0] {data\\};
        \node[anchor=south,medianColor] at (axis cs:2,8.895200798) {\scriptsize$890\%$};
		\addplot[mark=+,boxplot prepared={
			lower whisker=1.253844115,
			lower quartile=4.683965487,
			median=3.153664099,
			upper quartile=2.345760198,
			upper whisker=8.204796746
        }] table[row sep=\\,y index=0] {data\\};
        \node[medianColor] at (axis cs:3,3.75) {\scriptsize$315\%$};
		\addplot[mark=+,boxplot prepared={
			lower whisker=0.991563807,
			lower quartile=0.99999999,
			median=0.999960426,
			upper quartile=0.999302728,
			upper whisker=1.000000023
        }] table[row sep=\\,y index=0] {data\\};
        \node[medianColor] at (axis cs:4,2) {\scriptsize$100\%$};
		\addplot[mark=+,boxplot prepared={
			lower whisker=1,
			lower quartile=1,
			median=1,
			upper quartile=1,
			upper whisker=1
        }] table[row sep=\\,y index=0] {data\\};
        \node[medianColor] at (axis cs:5,2) {\scriptsize$100\%$};
    \end{axis}
\end{tikzpicture}%
		\caption{Integrality gap relative to BTI}
	\end{subfigure}
	\caption{The integrality gap of the start-up cost formulation for 14 test cases, absolute (left chart) and relative to BTI (right chart). BTI and Temp provide equal integrality gaps, up to numerical inaccuracies, and dominate the other models.}
	\label{figure:IntGap}
\end{figure*}

Despite these results, augmenting 1-Bin, 1-Bin* and 3-Bin with binary tree inequalities as cutting planes does not improve the solution times considerably. While 1-Bin was able to solve more test cases within a time limit of 1 hour when paired with BTIs, no advantage was determined for 3-Bin. The cutting plane algorithm of the BTI exhibits a very slow convergence of the lower bound and numerical problems, leading to long computation times for each reoptimization step in the IBM ILOG CPLEX and the FICO Xpress Optimizers. Further research on the selection and implementation of the BTIs might overcome the numerical problems encountered.

Temp however does not suffer from the numerical problems and the slow convergence of the BTIs, and, as substantiated experimentally in \cite{silbernagl_improving_2014}, outperforms the start-up cost formulations in current literature. Still, the description of $\scsEpigraph$ we provide in this paper is the theoretical basis for Temp, and in particular justifies the separation algorithm for inequalities~\eqref{equation:EX_RTI0},\eqref{equation:EX_RTI}.

\section{Conclusion}
\label{section:Conclusion}

We were able to derive a complete \hrepresentation of the epigraph of the summed start-up costs. The resulting Unit Commitment formulation significantly outperforms two state-of-the-art formulations in terms of the integrality gap. While applying the BTIs in a cutting plane algorithm does not improve the solution times, they provide the theoretical basis for an efficient extended formulation.

In future work one may also take into account the interdependencies of multiple units for the development of further inequalities. Consider for example a demand-driven Unit Commitment problem: during a sharp increase in demand, some number of units may be forced to start up, resulting in a bound on the total start-up costs of all units, which has to be modeled by multi-unit inequalities not considered in any Unit Commitment model so far.

\bibliographystyle{apalike}
\bibliography{refs}

\begin{thebibliography}{}

\bibitem[Arroyo and Conejo, 2000]{arroyo_optimal_2000}
Arroyo, J. and Conejo, A. (2000).
\newblock Optimal response of a thermal unit to an electricity spot market.
\newblock {\em {IEEE} Transactions on Power Systems}, 15(3):1098--1104.

\bibitem[Bertsimas and Weismantel, 2005]{bertsimas_optimization_2005}
Bertsimas, D. and Weismantel, R. (2005).
\newblock {\em Optimization over integers}, volume~13.
\newblock Dynamic Ideas Belmont.

\bibitem[Carri{\'o}n and Arroyo, 2006]{carrion_computationally_2006}
Carri{\'o}n, M. and Arroyo, J. (2006).
\newblock A computationally efficient mixed-integer linear formulation for the
  thermal unit commitment problem.
\newblock {\em {IEEE} Transactions on Power Systems}, 21(3):1371--1378.

\bibitem[Dantzig and Wolfe, 1960]{dantzig_decomposition_1960}
Dantzig, G.~B. and Wolfe, P. (1960).
\newblock Decomposition principle for linear programs.
\newblock {\em Operations Research}, 8(1):101--111.

\bibitem[Egerer et~al., 2014]{egerer_electricity_2014}
Egerer, J., Gerbaulet, C., Ihlenburg, R., Kunz, F., Reinhard, B., Hirschhausen,
  C.~v., Weber, A., and Weibezahn, J. (2014).
\newblock Electricity sector data for policy-relevant modeling: Data
  documentation and applications to the german and european electricity
  markets.
\newblock Data Documentation~72, Deutsches Institut f{\"u}r
  Wirtschaftsforschung ({DIW}), Berlin.

\bibitem[{ENTSO}-E, 2007]{entsoe_hourly_2007}
{ENTSO}-E (2007).
\newblock Hourly consumption profiles.
\newblock Technical report, {ENTSO}-E, European Network of Transmission System
  Operators for Electricity.

\bibitem[{EURELECTRIC}, 2003]{eurelectric_efficiency_2003}
{EURELECTRIC} (2003).
\newblock Efficiency in electricity generation.
\newblock Technical report, {EURELECTRIC}, Brussels, Belgium.

\bibitem[Gabow et~al., 1984]{gabow_scaling_1984}
Gabow, H.~N., Bentley, J.~L., and Tarjan, R.~E. (1984).
\newblock Scaling and related techniques for geometry problems.
\newblock In {\em Proceedings of the Sixteenth Annual {ACM} Symposium on Theory
  of Computing}, {STOC} '84, pages 135--143, New York, {NY}, {USA}. {ACM}.

\bibitem[Garver, 1962]{garver_power_1962}
Garver, L. (1962).
\newblock Power generation scheduling by integer programming-development of
  theory.
\newblock {\em Power Apparatus and Systems, Part {III}. Transactions of the
  American Institute of Electrical Engineers}, 81(3):730--734.

\bibitem[{German Federal Network Agency},
  2014]{german_federal_network_agency_list_2014}
{German Federal Network Agency} (2014).
\newblock List of power plants.

\bibitem[Kumar et~al., 2012]{kumar_power_2012}
Kumar, N., Besuner, P.~M., Lefton, S.~A., Agan, D.~D., and Hileman, D.~A.
  (2012).
\newblock Power plant cycling costs.
\newblock Technical report, Intertek {APTECH}, Sunnyvale, California.

\bibitem[Morales-Espa{\~n}a et~al., 2013]{morales-espana_tight_2013}
Morales-Espa{\~n}a, G., Latorre, J., and Ramos, A. (2013).
\newblock Tight and compact {MILP} formulation for the thermal unit commitment
  problem.
\newblock {\em {IEEE} Transactions on Power Systems}, 28(4):4897--4908.

\bibitem[Nowak and R{\"o}misch, 2000]{nowak_stochastic_2000}
Nowak, M.~P. and R{\"o}misch, W. (2000).
\newblock Stochastic lagrangian relaxation applied to power scheduling in a
  hydro-thermal system under uncertainty.
\newblock {\em Annals of Operations Research}, 100(1-4):251--272.

\bibitem[Rienecker et~al., 2011]{rienecker_merra:_2011}
Rienecker, M.~M., Suarez, M.~J., Gelaro, R., Todling, R., Bacmeister, J., Liu,
  E., Bosilovich, M.~G., Schubert, S.~D., Takacs, L., Kim, G.-K., Bloom, S.,
  Chen, J., Collins, D., Conaty, A., da~Silva, A., Gu, W., Joiner, J., Koster,
  R.~D., Lucchesi, R., Molod, A., Owens, T., Pawson, S., Pegion, P., Redder,
  C.~R., Reichle, R., Robertson, F.~R., Ruddick, A.~G., Sienkiewicz, M., and
  Woollen, J. (2011).
\newblock {MERRA}: {NASA}{\textquoteright}s modern-era retrospective analysis
  for research and applications.
\newblock {\em Journal of Climate}, 24(14):3624--3648.

\bibitem[Rosen, 1999]{rosen_handbook_1999}
Rosen, K.~H., editor (1999).
\newblock {\em Handbook of discrete and combinatorial mathematics}.
\newblock Discrete Mathematics and Its Applications. {CRC} press.

\bibitem[Silbernagl, tion]{silbernagl_thesis}
Silbernagl, M. (in preparation).
\newblock {\em The Unit Commitment Problem: Start-up Costs and Market
  Interaction}.
\newblock PhD thesis, Technische Universität München.

\bibitem[Silbernagl et~al., 2014]{silbernagl_improving_2014}
Silbernagl, M., Huber, M., and Brandenberg, R. (2014).
\newblock Improving accuracy and efficiency of start-up cost formulations in
  {MIP} unit commitment by modeling power plant temperatures.
\newblock {\em {arXiv}:1408.2644 [math]}.

\bibitem[Simoglou et~al., 2010]{simoglou_optimal_2010}
Simoglou, C., Biskas, P., and Bakirtzis, A. (2010).
\newblock Optimal self-scheduling of a thermal producer in short-term
  electricity markets by {MILP}.
\newblock {\em {IEEE} Transactions on Power Systems}, 25(4):1965 --1977.

\bibitem[Vuillemin, 1980]{vuillemin_unifying_1980}
Vuillemin, J. (1980).
\newblock A unifying look at data structures.
\newblock {\em Communications of the {ACM}}, 23(4):229--239.

\bibitem[Wood and Wollenberg, 1996]{wood_power_1996}
Wood, A. and Wollenberg, B. (1996).
\newblock {\em Power generation, operation and control}.
\newblock Wiley, 2nd ed. edition.

\end{thebibliography}

\end{document}